\title[Mass formula]{Deuring's mass formula of a Mumford family}
\author[Mao Sheng]{Mao Sheng}
\email{msheng@ustc.edu.cn} \address{School of Mathematical Sciences,
University of Science and Technology of China, Hefei, 230026, China}
\author[Kang Zuo]{Kang Zuo}
\email{zuok@uni-mainz.de}\address{Institut f\"{u}r  Mathematik,
Universit\"{a}t Mainz, Mainz, 55099, Germany}
\begin{document}
\theoremstyle{plain}
\newtheorem{thm}{Theorem}[section]
\newtheorem{theorem}[thm]{Theorem}
\newtheorem{lemma}[thm]{Lemma}
\newtheorem{corollary}[thm]{Corollary}
\newtheorem{proposition}[thm]{Proposition}
\newtheorem{addendum}[thm]{Addendum}
\newtheorem{variant}[thm]{Variant}
\theoremstyle{definition}
\newtheorem{lemma and definition}[thm]{Lemma and Definition}
\newtheorem{construction}[thm]{Construction}
\newtheorem{notations}[thm]{Notations}
\newtheorem{question}[thm]{Question}
\newtheorem{problem}[thm]{Problem}
\newtheorem{remark}[thm]{Remark}
\newtheorem{remarks}[thm]{Remarks}
\newtheorem{definition}[thm]{Definition}
\newtheorem{claim}[thm]{Claim}
\newtheorem{assumption}[thm]{Assumption}
\newtheorem{assumptions}[thm]{Assumptions}
\newtheorem{properties}[thm]{Properties}
\newtheorem{example}[thm]{Example}
\newtheorem{conjecture}[thm]{Conjecture}
\newtheorem{proposition and definition}[thm]{Proposition and Definition}
\numberwithin{equation}{thm}

\newcommand{\pP}{{\mathfrak p}}
\newcommand{\sA}{{\mathcal A}}
\newcommand{\sB}{{\mathcal B}}
\newcommand{\sC}{{\mathcal C}}
\newcommand{\sD}{{\mathcal D}}
\newcommand{\sE}{{\mathcal E}}
\newcommand{\sF}{{\mathcal F}}
\newcommand{\sG}{{\mathcal G}}
\newcommand{\sH}{{\mathcal H}}
\newcommand{\sI}{{\mathcal I}}
\newcommand{\sJ}{{\mathcal J}}
\newcommand{\sK}{{\mathcal K}}
\newcommand{\sL}{{\mathcal L}}
\newcommand{\sM}{{\mathcal M}}
\newcommand{\sN}{{\mathcal N}}
\newcommand{\sO}{{\mathcal O}}
\newcommand{\sP}{{\mathcal P}}
\newcommand{\sQ}{{\mathcal Q}}
\newcommand{\sR}{{\mathcal R}}
\newcommand{\sS}{{\mathcal S}}
\newcommand{\sT}{{\mathcal T}}
\newcommand{\sU}{{\mathcal U}}
\newcommand{\sV}{{\mathcal V}}
\newcommand{\sW}{{\mathcal W}}
\newcommand{\sX}{{\mathcal X}}
\newcommand{\sY}{{\mathcal Y}}
\newcommand{\sZ}{{\mathcal Z}}
\newcommand{\A}{{\mathbb A}}
\newcommand{\B}{{\mathbb B}}
\newcommand{\C}{{\mathbb C}}
\newcommand{\D}{{\mathbb D}}
\newcommand{\E}{{\mathbb E}}
\newcommand{\F}{{\mathbb F}}
\newcommand{\G}{{\mathbb G}}
\newcommand{\HH}{{\mathbb H}}
\newcommand{\I}{{\mathbb I}}
\newcommand{\J}{{\mathbb J}}
\renewcommand{\L}{{\mathbb L}}
\newcommand{\M}{{\mathbb M}}
\newcommand{\N}{{\mathbb N}}
\renewcommand{\P}{{\mathbb P}}
\newcommand{\Q}{{\mathbb Q}}
\newcommand{\R}{{\mathbb R}}
\newcommand{\SSS}{{\mathbb S}}
\newcommand{\T}{{\mathbb T}}
\newcommand{\U}{{\mathbb U}}
\newcommand{\V}{{\mathbb V}}
\newcommand{\W}{{\mathbb W}}
\newcommand{\X}{{\mathbb X}}
\newcommand{\Y}{{\mathbb Y}}
\newcommand{\Z}{{\mathbb Z}}
\newcommand{\id}{{\rm id}}
\newcommand{\rank}{{\rm rank}}
\newcommand{\END}{{\mathbb E}{\rm nd}}
\newcommand{\End}{{\rm End}}
\newcommand{\Hom}{{\rm Hom}}
\newcommand{\Hg}{{\rm Hg}}
\newcommand{\tr}{{\rm tr}}
\newcommand{\Cor}{{\rm Cor}}
\newcommand{\GL}{\mathrm{GL}}
\newcommand{\SL}{\mathrm{SL}}
\newcommand{\Aut}{\mathrm{Aut}}
\newcommand{\Sym}{\mathrm{Sym}}
\newcommand{\DD}{\mathbf{D}}
\newcommand{\EE}{\mathbf{E}}
\newcommand{\Gal}{\mathrm{Gal}}
\newcommand{\GSp}{\mathrm{GSp}}
\newcommand{\Spf}{\mathrm{Spf}}
\newcommand{\Spec}{\mathrm{Spec}}
\newcommand{\SU}{\mathrm{SU}}
\newcommand{\Res}{\mathrm{Res}}
\newcommand{\Rep}{\mathrm{Rep}}
\thanks{This work is partially supported by the SFB/TR 45 `Periods, Moduli
Spaces and Arithmetic of Algebraic Varieties' of the DFG.}

\begin{abstract}
We study the Newton polygon jumping locus of a Mumford family in
char $p$. Our main result says that, under a mild assumption on $p$,
the jumping locus consists of only supersingular points and its
cardinality is equal to $(p^r-1)(g-1)$, where $r$ is the degree of
the defining field of the base curve of a Mumford family in char $p$
and $g$ is the genus of the curve. The underlying technique is the
$p$-adic Hodge theory.
\end{abstract}

\maketitle

\section{Introduction}
Let $k$ be a finite field of char $p$ and $\bar k$ an algebraic
closure of $k$. A basic result of M. Deuring \cite{Deu} says that
elliptic curves over $\bar k$ can be divided into two classes:
ordinary and supersingular, and there are finitely many
supersingular elliptic curves up to isomorphisms. The mass formula
is a formula on the number of isomorphism classes of supersingular
elliptic curves. For an odd prime $p$, it can be deduced from the
fact that there are exactly $\frac{p-1}{2}$ supersingular elliptic
curves in the Legendre family
$$y^2=x(x-1)(x-t),\quad t\neq 0,1.$$
The purpose of this paper is to give analogous results for a Mumford
family. In \cite{Mu}, D. Mumford gave the \emph{first} example of families of Hodge type, which is characterized by the Hodge group (called also the special Mumford-Tate group in the literature) but not by the endomorphism algebra. We briefly recall the construction as follows. Let $F$ be a totally real cubic field with three real places $\tau_1,\tau_2,\tau_3$, and $D$ a quaternion division algebra over $F$ such that $D$ splits at one real place of $F$ and its corestriction to $\Q$ splits, i.e.,
$$
\Cor_{F|\Q}D:=(D^{(1)}\otimes D^{(2)}\otimes D^{(3)})^{\Gal(\bar{\Q}|\Q)}\cong M_{8}(\Q),
$$
where $D^{(i)}:=D\otimes_{F,\tau_i}\bar{\Q}$. It gives rise to families of abelian four-folds over smooth projective arithmetic quotients of the upper half plane,
whose general fiber has only $\Z$ as its endomorphism ring. For some purpose, his construction has been generalized (and also characterized) in the work of Viehweg and the second named author (see \cite{VZ}, particularly Theorem 0.5).\\

Now let $F$ be a totally real field of degree $d\geq 3$, whose
ring of algebraic integers is denoted by $\sO$, and $D$ a quaternion
division algebra over $F$, which is split only at one real place of
$F$. The corestriction $\Cor_{F|\Q}D$ is a central simple
$\Q$-algebra. Following the construction of Mumford loc. cit., one is able to associate $\Cor_{F|\Q}D$ with a Shimura
curve of Hodge type (see \S\ref{Section on Shimura curve of Hodge type} for details). The universal family of abelian varieties over
such a Shimura curve with a suitable level structure is called a
Mumford family in this paper. In order to do reduction modulo $p$, we need also a
natural integral model of a Mumford family. For that, we make the
following
\begin{assumption}\label{assumption on p from integral model}
Assume $p\geq 3$ and does not divide the discriminants of $F$ and
$D$.
\end{assumption}
After the work of M. Kisin \cite{Ki}, one is able to define the
integral canonical model of the Shimura curve over any prime
$\mathfrak p$ of $F$ over $p$ together with a universal abelian
scheme over the integral model, which is defined over
$\sO_{(\mathfrak p)}$. Fix such a universal abelian scheme and
denote its completion at $\mathfrak p$ by $f: X\to M$, whose modulo
$\mathfrak p$ reduction is denoted by $f_0:X_0\to M_0$. By the
theorem of Grothendieck-Katz, the Newton polygon jumping locus
$\sS\subset M_0(\bar k)$ consists of finitely many points. Our main
result is stated as follows:
\begin{theorem}[Theorem \ref{classification and existence of newton polygon}, Corollary \ref{Newton jumping locus}]\label{main result}
The Newton jumping locus $\sS$ consists only of supersingular
points. Assume $p\geq 5$ additionally, then one has a mass formula
for the cardinality of $\sS$:
$$
|\sS|=(p^r-1)(g-1),
$$
where $r=[F_{\mathfrak p}:\Q_p]$ and $g$ is the genus of $M_0$.
\end{theorem}
As $M_0$ may not be geometrically connected, the genus is defined to
be one plus the half of the summation of the degree of the canonical
class of each component in $M_0\otimes \bar k$.
\begin{remark}
The generic Newton polygon in $M_0(\bar k)$ is also determined (see
Theorem \ref{classification and existence of newton polygon}). For
the original example of D. Mumford, i.e., $d=3$ and $\Cor_{F|\Q}(D)$
split, R. Noot \cite{Noot1}-\cite{Noot2} (see particularly
Proposition 3.6 \cite{Noot1} and Proposition 2.2 \cite{Noot2})
classified the possible Newton polygons for the mod $p$ reduction of
an abelian variety defined over a number field appeared as a closed
fiber of a Mumford family. Compared with his method, the new point
here is a natural decomposition of the $p$-adic Galois
representation into a tensor product of two dimensional potentially
crystalline $\Q_{p^r}$-representations after tensorizing with
$\Q_{p^r}, r\leq 3$, and this is true for a general Mumford family.
In our approach, the classification result becomes a simple
consequence of the admissibility of a filtered $\phi$-module
associated with a crystalline representation.
\end{remark}

The above result settles then Conjecture 1.3 \cite{SZZ} for Mumford
families. In our previous work loc. cit., we have studied a certain
Shimura curve of PEL type: Deligne-Shimura's mod\`{e}le \'{e}trange
\cite{De1}. However, the old technique does not suffice for the
current situation (see \S1-\S5 loc. cit.). One main reason is that a
Mumford family is characterized by extra Hodge cycles in the generic
fiber which are not yet known to be algebraic in general. It makes impossible a
direct proof of an expected direct-tensor decomposition of the
universal filtered Dieudonn\'{e} module into rank two filtered
crystals over the global base with predicted relative Frobenius
actions on factors. Instead, we have to work with both the category
of \'{e}tale local systems and the category of families of filtered
Frobenius crystals, and use various comparison results in the
$p$-adic Hodge theory. What we have achieved is the following
result.
\begin{theorem}[Theorem \ref{theorem on tensor decomposition over local base}]\label{formal tensor decomposition}
Let $x_0$ be a closed point of $M$ and $\hat{M}_{x_0}$ the
completion of $M $ at $x_0$. Then one has a direct-tensor
decomposition in the category
$\mathcal{MF}_{[0,1]}^{\nabla}(\hat{M}_{x_0})$ of the restriction to
$\hat{M}_{x_0}$ of the universal filtered Dieudonn\'{e} module
attached to $f$:
$$
(H^1_{dR},F_{hod},\nabla^{GM},\phi)|_{\hat{M}_{x_0}}
\cong\{[\otimes_{i=0}^{r-1}(\sN_i,Fil^1_{\sN_i},\nabla_{\sN_i}),\phi_{ten}]\otimes
(M_{A_2},Fil^1_{A_2},d,\phi_{A_2})\}^{\oplus 2^{\epsilon(D)}},
$$
where $\{(\sN_i,Fil^1_{\sN_i},\nabla_{\sN_i})\}_{0\leq i\leq r-1}$
are eigen-components of the universal filtered Dieudonn\'{e} module
of a versal deformation of a Drinfel'd $\sO_{\mathfrak p}$-divisible
module and $\phi_{ten}$ is the tensor product of the $\phi_i$s on
eigen components, and $(M_{A_2},Fil^1_{A_2},d, \phi_{A_2})$ is a
constant unit crystal.
\end{theorem}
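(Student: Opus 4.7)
The plan is to produce the claimed decomposition first in characteristic zero from the tensor structure of the corestriction, encoded as Hodge cycles, then to transport it to the crystalline side at the fixed closed point via $p$-adic Hodge theory, and finally to propagate it across the formal neighborhood $\hat{M}_{x_0}$ by relative $p$-adic Hodge theory.

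\textbf{Hodge cycles from the corestriction.} First I would use the canonical isomorphism of $\bar{\Q}$-algebras $\Cor_{F|\Q} D \otimes_{\Q} \bar{\Q} \cong \bigotimes_{\sigma: F \hookrightarrow \bar{\Q}} D^{\sigma}$ to exhibit, on the standard faithful representation underlying Mumford's construction, a canonical tensor factorisation indexed by the $d$ embeddings of $F$, possibly refined into $2^{\epsilon(D)}$ summands according to the Brauer class of $\Cor_{F|\Q} D$. The tensors realizing this factorisation are Mumford-Tate invariant, hence Hodge cycles on the generic abelian scheme, and since we are on an abelian variety they are absolutely Hodge in the sense of Deligne.

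\textbf{Transport to $x_0$.} Next I would pass from Hodge cycles on a characteristic-zero lift of the generic fibre of $f$ to crystalline tensors at $x_0$ using the de Rham–\'{e}tale–crystalline comparison (Faltings, Fontaine-Messing). This yields a tensor decomposition of the filtered Dieudonn\'{e} module at $x_0$ respecting $Fil^1$ and $\phi$. Using $\sO\otimes_{\Z}\Z_p = \prod_{\pP'|p}\sO_{\pP'}$, I would then sort the $d$ embedding factors into the $r$ embeddings factoring through $\pP$ (these will eventually produce the $\sN_i$'s, carrying the Lubin-Tate $\sO_{\pP}$-action) and the remaining $d-r$ embeddings factoring through other primes above $p$ (these will assemble into the constant piece $M_{A_2}$).

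\textbf{Extension to $\hat{M}_{x_0}$.} The final step is to promote this pointwise decomposition to one in $MF_{[0,1]}^{\nabla}(\hat{M}_{x_0})$. The key input is Kisin's identification of $\hat{M}_{x_0}$ with the universal formal deformation of the $p$-divisible group $X_0[p^{\infty}]_{x_0}$ equipped with its Hodge tensors, together with relative $p$-adic Hodge theory (\cite{Fa1}--\cite{Fa3}, \cite{Br0}--\cite{Br}, \cite{Ki0}). These deform the Hodge tensors uniquely and horizontally, so that the decomposition at $x_0$ extends to one compatible with $\nabla^{GM}$ and $\phi$. The reflex condition of the Shimura datum concentrates the Kodaira-Spencer class on the $\pP$-isotypic direction, so only the $\pP$-part deforms non-trivially, and it deforms precisely as the versal deformation of a Drinfel'd $\sO_{\pP}$-divisible module of $\sO_{\pP}$-height one; this gives the eigen components $\sN_i$ and the tensor Frobenius $\phi_{ten}$. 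The non-$\pP$ components, consisting of \'{e}tale and multiplicative parts that admit no infinitesimal deformation, combine to give the constant unit crystal $(M_{A_2},Fil^1_{A_2},d,\phi_{A_2})$.

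\textbf{Main obstacle.} The hard step is the horizontal extension in Step 3: promoting the fibrewise compatibility of the Hodge tensors at $x_0$ to compatibility with $\nabla^{GM}$ and with the Frobenius across the full formal neighborhood, rather than merely at the closed point. This is exactly where relative $p$-adic Hodge theory and Breuil-Kisin modules (requiring Assumption \ref{assumption on p from integral model (in introduction)}, in particular $p\geq 3$) enter crucially. A secondary subtlety is identifying the $\pP$-isotypic deformation cleanly with the versal deformation of a Drinfel'd $\sO_{\pP}$-divisible module of $\sO_{\pP}$-height one, and tracking the $2^{\epsilon(D)}$ multiplicity coherently through each of the three steps above.
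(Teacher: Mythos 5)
Your outline follows the same broad arc as the paper — first extract a tensor decomposition from the corestriction structure, then transport it to the closed point via $p$-adic Hodge theory, then spread it over the formal neighborhood using Kisin's identification of $\hat M_{x_0}$ with a Faltings-type deformation space — but two of the three steps hide essential content, and one detail is wrong.

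The serious gap is in Step 2. Having a decomposition of the Galois representation $H^1_{et}(\bar{A}_{x^0},\Q_p)$ as a tensor product and knowing that the ambient representation is crystalline does \emph{not} automatically imply that each tensor factor is crystalline: a priori a tensor factor need not even be de Rham. The paper must work quite hard for this. It first shows the $U$-factor is potentially unramified by comparing Hodge--Tate and Hodge--de~Rham cocharacters (Blasius--Wintenberger) and invoking Sen's theorem (Proposition \ref{first crystalline tensor decomposition}), and then crucially appeals to Di~Matteo's theorem (Theorem \ref{HT implies de-rham }: if a tensor product of two $\Q_{p^r}$-representations is de Rham and one factor is Hodge--Tate, both are de Rham), followed by Berger's $p$-adic monodromy theorem to pass from de Rham to potentially crystalline, plus a Sen-operator argument to check the $V_1$-factor is Hodge--Tate in the first place (Proposition \ref{second crystalline tensor decomposition}). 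None of this is subsumed in ``the de Rham--\'{e}tale--crystalline comparison,'' and without it the transport to the crystalline side does not get off the ground. You also need the \emph{integral} refinement (Proposition \ref{structure of D_crys in lattice level}), obtained via Kisin's $\mathfrak S$-modules, to get a lattice-level rather than rational decomposition.

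The second issue is that Step 3 treats the horizontal extension as a soft consequence of ``relative $p$-adic Hodge theory deforms the tensors uniquely and horizontally.'' In the paper this is where the group theory lives: one has to construct an explicit isomorphism $\hat\xi_{crys}:\hat U_{\sG_B}\to\hat U_{\sG_A}$ between the Faltings deformation spaces, which rests on establishing the chain of central isogenies $\sG_B\times\sG_{A_2}\twoheadrightarrow\sG_{A_1}\times\sG_{A_2}\twoheadleftarrow\sG_A$ (Lemmas \ref{r-self tensor map}, \ref{form of tensors for G_A_1} and Corollary \ref{tensor factor has a filtered phi module structure} etc.), choosing compatible lifting cocharacters $\mu_{B'}$ and $\mu_{A'}$, and then verifying that Faltings' explicit Frobenius $u\circ(\phi_{M_A'}\otimes\phi_{\hat U_{\sG_A}})$ and the unique integrable connection transport correctly under $\hat\xi_{crys}$. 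Presenting this as an automatic deformation-theoretic fact misses the argument. Finally, a small but concrete error: the Drinfel'd $\sO_{\pP}$-divisible module $B$ associated with $V_1$ has $\sO_{\pP}$-height \emph{two}, not one (Lemma \ref{introduction of dieudonne module}); an $\sO_{\pP}$-height one object would have a zero-dimensional deformation space, whereas the one-dimensionality of $\hat U_{\sG_B}$ is exactly what matches it with $\hat M_{x_0}$.
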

Here $\epsilon(D)$ is equal to 0 or 1 which depends on $D$ only (see
\S\ref{Section on Shimura curve of Hodge type}). The above result shows an intimate relation between the associated $p$-divisible
groups to a Mumford family and Drinfel'd $\sO_{\mathfrak
p}$-divisible modules, which we intend to understand
in more depth in the future.\\

The paper is structured as follows. In \S\ref{Section on Shimura
curve of Hodge type} we review briefly the construction of a Shimura
curve of Hodge type arising from the corestriction of a quaternion
division algebra and deduce an integral model of a Mumford family
from the work of Kisin loc. cit.. In \S\ref{Section on two
dimesional crystalline rep} we first show a natural direct-tensor
decomposition of the \'{e}tale $\Z_{p^d}$-local system attached to a
Mumford family into rank two factors, and then show that over each
closed point each factor is potentially crystalline, from which the
classification of the Newton polygons in $M_0(\bar k)$ follows.
Section \ref{Section on formal tensor decomposition} is a bridge
between the classification and the mass formula, in which we study
the universal abelian scheme over the formal neighborhood a
$k$-rational point of the base curve by using the deformation theory
of a $p$-divisible group with Tate cycles due to G. Faltings \S7
\cite{Fa3} (see also \S4 \cite{Mo} and \S 1.5 \cite{Ki}) and prove
the direct-tensor decomposition result Theorem \ref{formal tensor
decomposition}. In the final section we prove the mass formula for
the supersingular locus in $M_0(\bar k)$. For a technical reason, we
consider instead a second tensor power of the universal filtered
Dieudonn\'{e} crystal and construct from a direct factor in the
corresponding decomposition a nonzero morphism $\tilde F_{rel}:
F_{M_0}^{*r}\sP_0\to \sP_0$ in char $p$ whose reduced zero divisor
coincides with the supersingular locus, where $\sP_0$ is a line
bundle of negative degree over $M_0$. With the aid of Theorem
\ref{formal tensor decomposition}, we apply the theory of display to
compute that the multiplicity of the zero divisor is everywhere two
and hence obtain the mass formula. \\

{\bf Acknowledgements:} The work, especially \S\ref{subsection on
tensor decomposition}, benefits from discussions with Jean-Marc
Fontaine during his stay in Mainz in April 2010. We would like to
thank him heartily. We thank Yi Ouyang and Liang Xiao for their help
on Theorem \ref{HT implies de-rham }, and Guitang Lan for his
careful reading of the paper. We would like also to thank Stefan M\"{u}ller-Stach for his interest in this work. Last but not least, we want to thank the referee for pointing out a flaw in the original statement of Lemma \ref{direct summan of dual crystalline sheaf}.

\section{Corestriction of a quaternion division algebra and an integral model of a Mumford family}\label{Section on Shimura curve of Hodge type}
Let $F$ be a totally real field of degree $d\geq 3$ and $D$ a
quaternion division algebra over $F$, which is split only at one
real place of $F$. We denote the set of real embeddings of $F$ by
 $\Psi:=\{\tau=\tau_1,\cdots,\tau_d\}$, and assume that $D$ is split over
$\tau$. Let $\bar \Q$ be the algebraic closure of $\Q$ in $\C$ and
$\Gal_{\Q}$ the absolute Galois group of $\Q$. Recall (see \S4
\cite{Mu}) that the corestriction $\Cor_{F|\Q}D$ is defined as the
subalgebra of $\Gal_{\Q}$-invariant elements of
 $
\bigotimes_{i=1}^{d}D\otimes_{F,\tau_i}\bar \Q
 $.
For it one has the following result:
\begin{lemma}[Lemma 5.7 (a) \cite{VZ}]\label{structure of
corestriction} Let $F$ and $D$ be as above. It holds that either
\begin{itemize}
    \item [(i)] $\Cor_{F|\Q}(D)\cong M_{2^d}(\Q)$ and $d$ is an odd number $\geq 3$,
    or
    \item [(ii)] $\Cor_{F|\Q}(D)\ncong M_{2^d}(\Q)$. Then
    $$
\Cor_{F|\Q}(D)\otimes_{\Q}\Q(\sqrt{b})\cong M_{2^d}(\Q(\sqrt{b})),
    $$
    where $\Q(\sqrt{b})$ is a quadratic
    field extension of $\Q$.
\end{itemize}
\end{lemma}
Both cases can be written uniformly into $
\Cor_{F|\Q}(D)\otimes_{\Q}\Q(\sqrt{b})\cong M_{2^d}(\Q(\sqrt{b}))
 $
for a square free rational number $b\in \Q$, and such an isomorphism
will be fixed in the following. We define a number $\epsilon(D)$
to be 0 in Case (i) and 1 in Case (ii). So we can
fix an embedding $\Cor_{F|\Q}(D)\hookrightarrow
M_{2^{d+\epsilon(D)}}(\Q)$ of $\Q$-algebras. Note that the case
$d=3$ and $\epsilon(D)=0$ is the original example considered by
Mumford loc. cit.. Recall also that one comes along with a natural
morphism of $\Q$-groups
 $$
{\rm Nm}: D^*\to \Cor_{F|\Q}(D)^*, \ d\mapsto (d\otimes
1)\otimes\cdots \otimes(d\otimes 1).
 $$
So one obtains a linear representation ${\rm Nm}: D^*\to
\GL_{2^{d+\epsilon(D)},\Q}$ of the $\Q$-group $D^*$. It gives rise
to a Shimura curve of Hodge type, which is not PEL type (see
Construction 5.8 page 269-273 \cite{VZ} and \S1.1 \cite{Noot2}). Put
$\tilde{G'}_{\Q}:=\{x\in D|\ {\rm Norm}(x)=1\}$ and $\tilde
G_{\Q}:=\G_{m,\Q}\times \tilde{G'}_{\Q}$, and write $\GL_{\Q}$ for
$\GL_{2^{d+\epsilon(D)},\Q}$. The $\Q$-group $G_{\Q}$ is defined to
be image of the morphism $ \tilde G_{\Q}\to \GL_{\Q}$, that is the
product of the natural morphism $\G_{m,\Q}\to \GL_{\Q}$ and ${\rm
Nm}|_{\tilde{G'}_{\Q}}$. It is connected and reductive. The natural
morphism $N: \tilde G_{\Q}\to G_{\Q}$ is a central isogeny. Let
$G'_{\Q}$ be the image of $\tilde G'_{\Q}$ in $G_{\Q}$. The natural
embedding $G_{\Q}\hookrightarrow \GL_{\Q}$ factors through
$\GSp_{\Q}\subset \GL_{\Q}$, which can be seen as follows: let
$H_{\Q}:=\Q(\sqrt{b})^{2^d}$ be a $\Q$-vector space with the
$\Cor_{F|\Q}(D)$ action by the left multiplication and $\G_{m,\Q}$
action by scalar multiplication. This induces a $G_{\Q}$-action on
$H_{\Q}$. It is easy to verify that there exists a
$\Q(\sqrt{b})$-valued symplectic form $\omega$ on $H_{\Q}$, unique
up to scalar, which is invariant under the $G'_{\Q}$-action.
Then $G_{\Q}=\G_{m,\Q}\cdot \tilde G'_{\Q}\subset \GL_{\Q}$ acts
on the $\Q$-valued symplectic form $
\psi:=\tr_{\Q(\sqrt{b})|\Q}\omega$ by similitude. Let $S^1$ be the
real group $\{z\in \C|\  z\bar z=1\}$. One defines
$$
u_0: S^1\to \tilde{G'}_{\R}(\R)\cong\SL_{2}(\R)\times \SU(2)^{\times d-1},\ e^{i\theta}\mapsto \left(%
\begin{array}{cc}
  \cos \theta & \sin \theta \\
  -\sin \theta & \cos \theta \\
\end{array}%
\right)\times id^{\times d-1}.
$$
The morphism $\tilde{h}_0=id\times u_0: \R^*\times S^1\to
\tilde{G}_{\R}$ descends to a morphism of real groups:
$$
h_0: \SSS=\Res_{\C|\R}\G_m\to G_{\R}.
$$
Let $X$ be the $G(\R)$-conjugacy class of $h_0$ and
$(\GSp(H_{\Q},\psi),X(\psi))$ the Siegel space defined by
$(H_{\Q},\psi)$. One verifies that
 $(G_{\Q},X)\hookrightarrow(\GSp(H_{\Q},\psi),X(\psi))$  is a morphism of Shimura datum, and therefore defines a
Shimura curve of Hodge type. Now let $K\subset G(\A_f)$ be a compact
open subgroup and one defines the Shimura curve as the double coset
$$
Sh_{K}(G,X):= G(\Q)\backslash X\times G(\A_f)/ K,
$$
where
$$
q(x,a)b=(qx,qab), \ q\in G(\Q), x\in X, a\in G(\A_f), b\in K.
$$
By the theory of canonical model, $M_K:=Sh_{K}(G,X)$ is naturally
defined over the reflex field of $(G,X)$, that is $\tau(F)\subset
\C$ in this current case. It is not difficult to show that $M_K$ is
proper over $F$.\\

Now let $p$ be a rational prime satisfying Assumption
\ref{assumption on p from integral model} in the introductory part
and $p\sO=\prod_{i=1}^{n}\mathfrak{p}_i$ the prime decomposition of
$p$ in $F$. By choosing an embedding $\iota: \bar \Q\hookrightarrow
\bar \Q_p$ (which is fixed once and for all), one gets an
identification of $\Psi$ with $$\Hom_{\Q}(F,\bar \Q_p)=
\coprod_{i=1}^{n}\Hom_{\Q_p}(F_{\mathfrak{p}_i},\bar \Q_p).$$ Write
$F_i$ for $F_{\mathfrak{p}_i}$ and put $r_i:=[F_i:\Q_p]$. We assume
that $\tau\in \Hom_{\Q_p}(F_{1},\bar \Q_p)$. Set
$\mathfrak{p}=\mathfrak{p}_1$ and $r=r_1$. The condition on $p$
implies that $G_{\Q_p}$ is quasi-split and split over an unramified
extension of $\Q_p$. Hence hyperspecial subgroups exist in $G(\Q_p)$
(see 1.10 \cite{Ti}). Recall that we have a central isogeny $\tilde
G_{\Q}\to G_{\Q}\subset \GL_{\Q}$ over $\Q$ with $\tilde
G_{\Q}=\G_{m,\Q}\times \tilde G'_{\Q}$, where $\tilde G'_{\Q}=\ker\
({\rm Norm:}D^*\to F^*)$. The assumption on $p$ implies that
$D^*(\Q_p)\cong \prod_{i=1}^{n}\GL_2(F_{i})$. It is
clear that ${\rm Norm}\otimes_{\Q}\Q_p$ becomes a product of the
determinants under the isomorphism. So this implies that $\tilde
G'(\Q_p)\cong\prod_{i=1}^{n}\SL_2(F_{i})$, and hence
$\tilde G(\Q_p)\cong\Q_p^*\times
\prod_{i=1}^{n}\SL_2(F_{i})$. Thus a hyperspecial
subgroup of $G(\Q_p)$ is conjugate to the image of $\Z_p^*\times
\prod_{i=1}^{n}\SL_2(\sO_{F_{i}})\subset \tilde
G(\Q_p)$ under the isogeny $\tilde G(\Q_p)\to G(\Q_p)$. In what
follows the $p$-component $K_p\subset G(\Q_p)$ of the level
structure $K$($=K_pK^p\subset G(\Q_p)G(\A_f^p)$) is always taken to
be hyperspecial. The main result of Kisin \cite{Ki} asserts then
that, for our chosen prime $\mathfrak p|p$, \emph{the} integral
canonical model $\sM_K$ of $M_K$ exists, which is a smooth
$\sO_{(\mathfrak p)}$-scheme for $K^p$ sufficiently small. The
construction of $\sM_K$ (see \S2.3 loc. cit.) provides with a
universal abelian scheme over $\sM_K$ as well, once the coprime to
$p$-component $K^p$ is chosen small enough: take a suitable maximal
order $\sO_D$ of the $F$-algebra $D$ and consider
$$
\Cor_{F|\Q}\sO_D:=(\otimes_{i=1}^{d}\sO_D\otimes_{\sO_F,\tau_i}\bar{\Z})^{\Gal_{\Q}}\subset
\Cor_{F|\Q}D.
$$
There exists a lattice $H_{\Z}\subset H_{\Q}$, which is
stabilized by $\Cor_{F|\Q}\sO_D\otimes_{\Z}\sO_{\Q(\sqrt b)}$, such
that there is a closed embedding $G_{\Z_p}\hookrightarrow
\GL(H_{\Z_p})$ (where $G_{\Z_p}$ is the reductive group scheme over
$\Z_p$ associated with $K_p$) whose generic fiber is the base change
to $\Q_p$ of $G_{\Q}\hookrightarrow \GL_{\Q}$. Let $K'_p\subset
\GSp(\Q_p)$ be the stabilizer of $H_{\Z_p}$. One can choose a
$K'^p\subset \GSp(\A_f^p)$ such that for $K'=K'_pK'^p$ one has an
embedding of Shimura varieties
 $
M_K\hookrightarrow Sh_{K'}(\GSp(\psi),X(\psi))
 $ and $Sh_{K'}(\GSp(\psi),X(\psi))$ has
an integral model  $\sS_{K'}:=\sS_{K'}(\GSp(\psi),X(\psi)) $ over
$\Z_{(p)}$ (which is not necessarily smooth) representing a moduli
functor over $\Z_{(p)}$ (see \S2.3.3 loc. cit.). As $K^p$ is
required to be small enough, one may further assume that $K'^p$ is
taken so small that there exists an abelian scheme $\sA_{K'}\to
\sS_{K'}$ over $\sS_{K'}$. Recall (see Theorem 2.3.8 loc. cit.) that
$\sM_{K}$ is defined as the normalization of the closure of the
composite
$$
M_K\hookrightarrow Sh_{K'}(\GSp(\psi),X(\psi))\hookrightarrow
\sS_{K'}\times_{\Z_{(p)}}\sO_{(\mathfrak p)}.
$$
Now we define our abelian scheme $f_K: \sX_K\to \sM_K$ to be the
morphism sitting in the Cartesian diagram:
$$
\xymatrix{
          \sX_K\ar[d]_{f_K} \ar[r]_{}     &   \sA_{K'}\times_{\Z_{(p)}}\sO_{(\mathfrak p)}      \ar[d]^{ }     \\
    \sM_K\ar[r]_{ } & \sS_{K'}\times_{\Z_{(p)}}\sO_{(\mathfrak p)}.             }
$$
For the sake of convenience, we shall change our foregoing notation
as follows: let $M$ (resp. $f: X\to M$) be the completion of the
integral canonical model $\sM_K$ (resp. $f_K: \sX_K\to \sM_K$) at
$\mathfrak p$. They are defined over the discrete valuation ring
$\sO_{\mathfrak p}$, the completion of $\sO_{(\mathfrak p)}$ at the
maximal ideal. The superscript (resp. subscript) zero on an object
means the base change of the object to the generic (resp. closed)
fiber of $\sO_{\mathfrak p}$.\\
To the universal abelian scheme $f:X\to M$ we attach the \'{e}tale
$\Z_p$-local system $\HH:=R^1f^0_{*}(\Z_{p})_{\bar{X}_{et}^0}$ over
$M^0$ and the universal filtered Dieudonn\'{e} module $(H,F,
\nabla,\phi):=(H^1_{dR},F_{hod},\nabla^{GM},\phi)$, which is an
object in the category $\mathcal{MF}^{\nabla}_{[0,1]}(M)$ introduced
by Faltings (see Ch. II \cite{Fa2} and \S3 \cite{Fa3}). To
distinguish the notation, the $p$ torsion analogue of the previous
category will be denoted by
$\mathcal{MF}^{\nabla}_{[0,a]}(M)_{tor}$. In loc. cit. Faltings
constructed a fully faithful functor $\DD$ from
$\mathcal{MF}^{\nabla}_{[0,p-2]}(M)$ (resp.
$\mathcal{MF}^{\nabla}_{[0,p-2]}(M)_{tor}$) to the category of
\'{e}tale $\Z_p$ (resp. $p$-torsion) local systems over $M^0$. By
the Remark after Theorem 2.6* in \cite{Fa2}, one has
$\DD(\sO_{X}/p^n,d)=\Z/p^n$ for each $n\in \N$. Applying Theorem 6.2
and Remark after the theorem loc. cit. on the compatibility of the
direct image with the functor $\DD$, one gets
 $
\DD(H/p^n,F, \nabla,\phi)=\HH^\vee/p^n
 $.
By taking the inverse limit, one obtains then $\DD(H,F,
\nabla,\phi)=\HH^\vee$. One notices that the information on the Newton
jumping locus of $f_0: X_0\to M_0$ is encoded in the attached
universal filtered Dieudonn\'{e} module while the defining
information of a Mumford family is basically contained in the
\'{e}tale local system over $M^0$.

\section{Two dimensional potentially crystalline $\Q_{p^r}$-representations and classification of the Newton
polygons}\label{Section on two dimesional crystalline rep} For a
$k$-rational point $x_0$ of $M_0$, the closed fiber of $f_0$ at
$x_0$ is denoted by $A_{x_0}$. The Newton polygon of $A_{x_0}$ is
defined to be the Newton polygon of its associated $p$-divisible
group. The aim of this section is to determine the possible Newton
polygons of $A_{x_0}$ when $x_0$ varies in $M_0(\bar k)$.
\subsection{Tensor decomposition of the Galois
representation}\label{subsection on tensor decomposition} Let $x_0$
be as above. Because $M$ is smooth over $\sO_{\mathfrak p}$, there
exists an $\sO_{W(k)}$-valued point $x$ of $M$ which lifts $x_0$.
Let $A_{x}$ be the corresponding abelian scheme over $\sO_{W(k)}$
whose reduction is equal to $A_{x_0}$. The aim of this paragraph is
to show a certain direct-tensor decomposition of the $p$-adic Galois
representation associated with the generic fiber $A_{x^0}$ of $A_x$.
\begin{lemma}\label{tensor decomposition of corestriction algebra over local
fields} One has a natural isomorphism of $\Q_p$-algebras:
$$
\Cor_{F|\Q}(D)\otimes_{\Q}\Q_p\cong\otimes_{i=1}^{n}\Cor_{F_i|\Q_p}(D\otimes_FF_i).
$$
\end{lemma}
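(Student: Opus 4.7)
The plan is to reduce this to a standard Galois-descent argument using the decomposition of $\Hom_\Q(F,\bar\Q_p)$ into $\Gal_{\Q_p}$-orbits. Recall that the fixed embedding $\iota:\bar\Q\hookrightarrow\bar\Q_p$ identifies $\Psi=\Hom_\Q(F,\bar\Q)$ with $\Hom_\Q(F,\bar\Q_p)$, and the latter breaks up as a disjoint union $\coprod_{i=1}^n\Hom_{\Q_p}(F_i,\bar\Q_p)$, where the $i$-th piece is precisely the $\Gal_{\Q_p}$-orbit consisting of those $\tau$ that extend the $\mathfrak p_i$-adic valuation on $F$.

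First, I would rewrite the left-hand side by Galois descent. By definition,
$$\Cor_{F|\Q}(D)=\Bigl(\bigotimes_{\tau\in\Psi}D\otimes_{F,\tau}\bar\Q\Bigr)^{\Gal_\Q},$$
and since this is a $\Q$-form of the big tensor product, base change along $\Q\hookrightarrow\Q_p$ and then to $\bar\Q_p$ gives
$$\Cor_{F|\Q}(D)\otimes_\Q\bar\Q_p\;\cong\;\bigotimes_{\tau\in\Hom_\Q(F,\bar\Q_p)}D\otimes_{F,\tau}\bar\Q_p,$$
and the $\Q_p$-form of this $\bar\Q_p$-algebra, which is $\Cor_{F|\Q}(D)\otimes_\Q\Q_p$, is recovered as the $\Gal_{\Q_p}$-invariants on the right.

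Second, I would partition the tensor product along the $\Gal_{\Q_p}$-orbit decomposition:
$$\bigotimes_{\tau\in\Hom_\Q(F,\bar\Q_p)}D\otimes_{F,\tau}\bar\Q_p\;=\;\bigotimes_{i=1}^n\underbrace{\Bigl(\bigotimes_{\tau\in\Hom_{\Q_p}(F_i,\bar\Q_p)}(D\otimes_F F_i)\otimes_{F_i,\tau}\bar\Q_p\Bigr)}_{=:V_i}.$$
Each factor $V_i$ is $\Gal_{\Q_p}$-stable (since $\Gal_{\Q_p}$ permutes the embeddings within $\Hom_{\Q_p}(F_i,\bar\Q_p)$), and taking $\Gal_{\Q_p}$-invariants of $V_i$ reproduces the corestriction of $D\otimes_FF_i$ along $F_i|\Q_p$ by the same Galois-descent recipe applied one level down:
$$V_i^{\Gal_{\Q_p}}\cong\Cor_{F_i|\Q_p}(D\otimes_F F_i).$$

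Third, to pass from $(\bigotimes_iV_i)^{\Gal_{\Q_p}}$ to $\bigotimes_i V_i^{\Gal_{\Q_p}}$ I would invoke standard Galois descent for finite étale $\Q_p$-algebras: each $V_i$, being already equipped with the $\Q_p$-rational form $W_i:=V_i^{\Gal_{\Q_p}}$, satisfies $V_i\cong W_i\otimes_{\Q_p}\bar\Q_p$, so
$$\bigotimes_{i=1}^nV_i\cong\Bigl(\bigotimes_{i=1}^nW_i\Bigr)\otimes_{\Q_p}\bar\Q_p,$$
and taking $\Gal_{\Q_p}$-invariants yields $\bigotimes_{i=1}^nW_i$. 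Combining this with the first two steps produces the desired isomorphism. The only nontrivial point, which I would make explicit, is the identification of the $\Gal_{\Q_p}$-orbits on $\Hom_\Q(F,\bar\Q_p)$ with the sets $\Hom_{\Q_p}(F_i,\bar\Q_p)$; everything else is formal manipulation, and a dimension count ($4^d$ on both sides) provides an independent sanity check.
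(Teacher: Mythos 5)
Your proposal is correct and follows essentially the same route as the paper's proof: both identify $\Psi$ with $\Hom_{\Q}(F,\bar\Q_p)$ via $\iota$, decompose it into $\Gal_{\Q_p}$-orbits $\coprod_i\Hom_{\Q_p}(F_i,\bar\Q_p)$, and take Galois invariants orbit by orbit. The one small difference is in the final step: the paper observes that $\bigotimes_i\Cor_{F_i|\Q_p}(D\otimes_F F_i)$ sits inside the total $D_\iota$-invariants and then appeals to a dimension count to conclude equality, whereas you pass from $(\bigotimes_iV_i)^{\Gal_{\Q_p}}$ to $\bigotimes_iV_i^{\Gal_{\Q_p}}$ directly by noting that $\bigotimes_i W_i$ is already a $\Q_p$-form of $\bigotimes_i V_i$; this is a marginally cleaner way to close the argument but conceptually the same.
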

\begin{proof}
Put $D_{i}=D\otimes_{F,\tau_i}\bar \Q, 1\leq i\leq d$. For an
element $a\otimes \lambda \in D_i$ and $g\in \Gal_{\Q}$,
$$
g(a\otimes_{\tau_i}\lambda)=a\otimes_{g(\tau_i)}g(\lambda).
$$
By the definition of the corestriction, one has a natural
isomorphism of $\Gal_{\Q}$-modules:
$$
\Cor_{F|\Q}(D)\otimes_{\Q}\bar{\Q}\cong\otimes_{i=1}^{d}D_i.
$$
Let $D_{\iota}$ be the decomposition group of $\iota$ in
$\Gal_{\Q}$, which is isomorphic to the local Galois group
$\Gal_{\Q_p}$. Now we consider the $D_{\iota}$-invariants of two
sides of the above isomorphism after tensorizing with $\bar{\Q}_p$
via $\iota$. Obviously one obtains $\Cor_{F|\Q}(D)\otimes_{\Q}\Q_p$
from the left side. Let
$$
O_1:=\{\tau_1=\tau,\cdots,\tau_{r_1}=\tau_{r}\},\cdots,
O_n=\{\tau_{r_1+\cdots+r_{d-1}+1},\cdots,\tau_{r_1+\cdots+r_{d-1}+r_d}=\tau_n\}
$$
be the $n$-orbits of $D_{\iota}$-action on $\Psi$. Note that there
is a natural isomorphism
$$
(\otimes_{\tau_j\in
O_i}D_j\otimes_{\bar{\Q},\iota}\bar{\Q}_p)^{D_{\iota}}\cong\Cor_{F_i|\Q_p}(D\otimes_FF_i).
$$
As the tensor product $\otimes_{i=1}^{n}(\otimes_{\tau_j\in
O_i}D_j\otimes_{\bar{\Q},\iota}\bar{\Q}_p)^{D_{\iota}}$ over
$\Q_p$ is clearly a subspace of the $D_{\iota}$-invariants on the
right side, it must be the whole invariant space for the dimension
reason. So the lemma follows.
\end{proof}
Consider the base change to $\Q_p$ of the $\Q$-morphism ${\rm Nm}:
D^*\to \Cor_{F|\Q}(D)^*$. The following statement is clear from
the proof of the last lemma.
\begin{lemma}
The morphism ${\rm Nm}_{\Q_p}: D^*(\Q_p)\to
\Cor_{F|\Q}(D)^*(\Q_p)$ factors through the natural morphism
$$
\prod_{i=1}^{n}\Cor_{F_i|\Q_p}(D\otimes_FF_i)^*\to
\Cor_{F|\Q}(D)^*(\Q_p).
$$
Moreover, under the natural decomposition
$D^*(\Q_p)=\prod_{i=1}^{n}(D\otimes_FF_{i})^*$, ${\rm Nm}_{\Q_p}$
is written as a product $\prod_{i=1}^{n}{\rm Nm}_i$ where for each
$i$ the morphism
$$
{\rm Nm}_i: (D\otimes_FF_i)^*\to \Cor_{F_i|\Q_p}(D\otimes_FF_i)^*
$$
is the natural diagonal morphism for the corestriction.
\end{lemma}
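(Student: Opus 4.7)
The plan is to deduce the statement by base-changing the defining morphism $\mathrm{Nm}: D^*\to \Cor_{F|\Q}(D)^*$ to $\bar{\Q}_p$, grouping tensor factors along the $D_\iota$-orbit decomposition of $\Psi$ already introduced in the proof of the preceding lemma, and then taking $D_\iota$-invariants. Since that lemma already performs the invariant computation at the level of algebras, the present lemma should follow by doing the same computation at the level of the diagonal morphism.

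First I would base-change $\mathrm{Nm}$ along $\iota$. Under the $\Gal_\Q$-equivariant identification
$$
\Cor_{F|\Q}(D)\otimes_\Q\bar{\Q}_p\cong\bigotimes_{j=1}^d D_j
$$
from the preceding lemma, the definition of $\mathrm{Nm}$ becomes the total diagonal $d\otimes \lambda\mapsto \lambda\cdot \bigotimes_{j=1}^d(d\otimes 1)$, which is visibly $D_\iota$-equivariant (the action on the target simultaneously permutes the factors within each orbit $O_i$ and acts on $\bar{\Q}_p$). Next I would factor this total diagonal through $\prod_{i=1}^n (D\otimes_\Q\bar{\Q}_p)$: namely, write it as the $n$-fold diagonal followed by the product over $i$ of the orbit diagonals
$$
D\otimes_\Q\bar{\Q}_p\longrightarrow \bigotimes_{\tau_j\in O_i}D_j.
$$
Each such orbit diagonal is $D_\iota$-equivariant and is, by inspection, the base change to $\bar{\Q}_p$ of the diagonal $(D\otimes_FF_i)\to \Cor_{F_i|\Q_p}(D\otimes_FF_i)$ which \emph{defines} the local corestriction.

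Finally I would take $D_\iota$-invariants throughout. On the source this yields $(D\otimes_\Q\Q_p)^{*}=\prod_i(D\otimes_FF_i)^{*}$; on the intermediate product it yields the same; and on the target, orbit by orbit, the preceding lemma identifies the invariants with $\prod_i\Cor_{F_i|\Q_p}(D\otimes_FF_i)^{*}$. Functoriality of invariants then produces both the claimed factorization through $\prod_i\Cor_{F_i|\Q_p}(D\otimes_FF_i)^{*}$ and the product description $\mathrm{Nm}_{\Q_p}=\prod_i\mathrm{Nm}_i$ in one stroke. I do not expect a substantive obstacle: the only technical content is the compatibility of Galois invariants with the tensor decomposition along orbits, and this is precisely what was verified in the preceding lemma, so the present statement is essentially its morphism-level shadow.
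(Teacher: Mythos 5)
Your proposal is correct and follows the paper's (very terse) argument essentially verbatim: the paper simply states that the claim follows from the proof of Lemma \ref{tensor decomposition of corestriction algebra over local fields} by base-changing ${\rm Nm}$, and your write-up is the expected morphism-level unwinding of exactly that argument — group the $\bar\Q_p$-tensor factors by the $D_\iota$-orbits, observe that the total diagonal factors through the orbitwise diagonals, and take $D_\iota$-invariants using the orbitwise identifications already established. The only caveat is that your displayed formula for ${\rm Nm}\otimes\bar\Q_p$ on a general element $d\otimes\lambda$ is written as if ${\rm Nm}$ were linear rather than multiplicative; the clean statement is that under $D\otimes_\Q\bar\Q_p\cong\prod_j D_j$ the base-changed ${\rm Nm}$ is $(g_j)_j\mapsto\bigotimes_j g_j$, and this is what your subsequent orbit factorization actually uses.
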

As a consequence, the representation of $D^*(\Q_p)$ on $H_{\Q_p}$
admits a natural tensor decomposition: by Schur's lemma the
representation decomposes as a tensor product. In the current
situation, this can be seen in a direct way: by Assumption
\ref{assumption on p from integral model}, $D\otimes_FF_i$ splits
for each $i$, and so does $\Cor_{F_i|\Q_p}(D\otimes_FF_i)$, which
is isomorphic to $M_{2^{r_i}}(\Q_p)$. In the case
$\epsilon(D)=0$, the morphism
$$
\prod_{i=1}^{n}\Cor_{F_i|\Q_p}(D\otimes_FF_i)^*\to
\Cor_{F|\Q}(D)^*(\Q_p)=\GL(H_{\Q_p})
$$
is isomorphic to the tensor product morphism
$$
\prod_{i=1}^{n}\GL_{2^{r_i}}(\Q_p)\longrightarrow \GL_{2^d}(\Q_p),
\ (g_1,\cdots,g_n)\mapsto g_1\otimes \cdots\otimes g_n.
$$
In the case $\epsilon(D)=1$, $\Cor_{F|\Q}(D)$ is nonsplit and it
splits after tensorizing with $\Q(\sqrt b)$. Consider the composite
\begin{eqnarray*}
  \prod_{i=1}^{n}\Cor_{F_i|\Q_p}(D\otimes_FF_i)^* &\to&\Cor_{F|\Q}(D)^*(\Q_p)\subset(\Cor_{F|\Q}(D)\otimes_{\Q}\Q(\sqrt
b))^*(\Q_p)  \\
  &=&\GL_{\Q(\sqrt b)}(H_{\Q})(\Q_p)\subset \GL(H_{\Q})(\Q_p).
\end{eqnarray*}
The above inclusion $\Cor_{F|\Q}(D)^*\subset
(\Cor_{F|\Q}(D)\otimes_{\Q}\Q(\sqrt b))^*$ is given by $a\mapsto
a\otimes 1$. One has the following commutative diagram
$$
\xymatrix{
  \Cor_{F|\Q}(D)^*(\Q_p)\times \Q(\sqrt b)^*(\Q_p)\ar[d]_{\cap} \ar[r]^{\quad\quad\quad}
                & \GL_{\Q(\sqrt b)}(H_{\Q})(\Q_p) \ar[d]^{\cap}  \\
  \Cor_{F|\Q}(D)^*(\Q_p)\times
\GL_2(\Q_p)  \ar[r]_{\quad\quad\quad}
                &    \GL(H_{\Q})(\Q_p).          }
$$
The image of $\prod_{i=1}^{n}\Cor_{F_i|\Q_p}(D\otimes_FF_i)^*$ in
the left-up element of the above diagram is contained in
$\Cor_{F|\Q}(D)^*(\Q_p)\times \{1\}$. Thus the morphism $$
\prod_{i=1}^{n}\Cor_{F_i|\Q_p}(D\otimes_FF_i)^*\to\GL(H_{\Q_p})$$
is isomorphic to the composite of the obvious morphisms
$$
\prod_{i=1}^{n}\GL_{2^{r_i}}(\Q_p)\hookrightarrow
\prod_{i=1}^{n}\GL_{2^{r_i}}(\Q_p)\times
\GL_2(\Q_p)\stackrel{\otimes}{\longrightarrow}
\GL_{2^{d+1}}(\Q_p).
$$
For $s\in \N$, one denotes by $\sigma\in \Gal(\Q_{p^s}|\Q_p)$ the
Frobenius automorphism. For a topological group $P$ with a
continuous $\Z_{p^s}$ linear representation $W$, the
$\sigma^i$-conjugate $W_{\sigma^i}$ of $W$ for $0\leq i\leq s-1$
is defined to be the tensor product
$W\otimes_{\Z_{p^s},\sigma^i}\Z_{p^s}$, where $P$ acts on
$\Z_{p^s}$ trivially. Similarly define the
$\sigma^{\cdot}$-conjugations of a $\Q_{p^s}$-representation. The
symbol $\otimes_{\sigma^i}$ signifies the equalities of two
tensors:
$$
\lambda(x\otimes \mu)=x\otimes \lambda \mu,\ \lambda x\otimes\mu =
x\otimes \lambda^{\sigma^i}\mu , \ {\rm for}\ \lambda,\mu\in
\Z_{p^s}, x\in W.
$$
Consider the morphism
$$
{\rm Nm}_1: (D\otimes_FF_1)^*\longrightarrow
\Cor_{F_1|\Q_p}(D\otimes_FF_1)^*, \ a\mapsto
(a\otimes_{F_1,\tau_1}1)\otimes\cdots\otimes(a\otimes_{F_1,\tau_r}1).
$$
Note that for $1\leq i\leq r$, $\tau_i(F_1)=\Q_{p^r}$, the unique
unramified extension of $\Q_p$ of degree $r$ in $\bar{\Q}_p$. Then
one has a natural isomorphism
$$
\Cor_{F_1|\Q_p}(D\otimes_FF_1)\otimes_{\Q_p}F_1\cong\otimes_{i=0}^{r-1}(D\otimes_FF_1\otimes_{F_1,\sigma^
i}F_1).
$$
This implies that the natural morphism
 $(D\otimes_FF_1)^*\longrightarrow \Cor_{F_1|\Q_p}(D\otimes_FF_1)^*(F_1)$ is
isomorphic to the composite of
$$
\GL_2(\Q_{p^r})\hookrightarrow \prod_{i=0}^{r-1} \GL_2(\Q_{p^r}),
\ g\mapsto (g,\cdots,\sigma^i(g),\cdots,\sigma^{r-1}(g))
$$
with the tensor product morphism $ \prod_{i=0}^{r-1}
\GL_2(\Q_{p^r})\longrightarrow \GL_{2^r}(\Q_{p^r})
 $.
Summarizing the above discussions, we derive the following
\begin{lemma}\label{tensor decomposition wrt D star}
The representation of $D^*(\Q_p)$ on $H_{\Q_p}$ admits a natural
tensor decomposition
$$
H_{\Q_p}=(V_{\Q_p}\otimes U_{1,\Q_p}\otimes\cdots\otimes
U_{n-1,\Q_p})^{\oplus 2^{\epsilon(D)}}.
$$
Moreover, the representation $V_{\Q_p}\otimes_{\Q_p}\Q_{p^r}$
decomposes further into a tensor product $
V_{1}\otimes_{\Q_{p^r}}V_{1,\sigma}\otimes_{\Q_{p^r}}
\cdots\otimes_{\Q_{p^r}} V_{1,\sigma^{r-1}}
 $
with $\dim_{\Q_{p^r}}V_1=2$. For $1\leq i\leq n-1$,
$U_{i,\Q_p}\otimes_{\Q_p}\Q_{p^{r_{i+1}}}$ decomposes into a
tensor product in a similar manner.
\end{lemma}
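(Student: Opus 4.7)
The plan is to assemble the decomposition directly from Lemmas 3.1, 3.2 and the explicit descriptions of the morphisms $\text{Nm}_i$ that follow them; no new input is needed beyond Assumption 2.1. The representation of $D^*(\Q_p)$ on $H_{\Q_p}$ factors through $\Cor_{F|\Q}(D)^*(\Q_p)$ (with the extra $\Q(\sqrt{b})^*$-factor when $\epsilon(D)=1$), and Lemma 3.1 identifies $\Cor_{F|\Q}(D)\otimes_{\Q}\Q_p$ with $\bigotimes_{i=1}^{n}\Cor_{F_i|\Q_p}(D\otimes_F F_i)$. Because $p$ is prime to the discriminants of $F$ and $D$, each local factor $D\otimes_F F_i$ is split, so its corestriction is isomorphic to $M_{2^{r_i}}(\Q_p)$; choosing such isomorphisms gives an identification $H_{\Q_p}\cong (\Q_p^{2^{r_1}}\otimes\cdots\otimes\Q_p^{2^{r_n}})^{\oplus 2^{\epsilon(D)}}$ as $\Q_p$-vector spaces.

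Next, I invoke Lemma 3.2: under the decomposition $D^*(\Q_p)=\prod_{i=1}^{n}(D\otimes_F F_i)^*$, the morphism $\text{Nm}_{\Q_p}$ is the product $\prod \text{Nm}_i$, so each factor $(D\otimes_F F_i)^*$ acts on the $i$-th tensor slot and trivially on the others. Setting $V_{\Q_p}$ to be the slot indexed by $i=1$ (recall $\pP=\pP_1$) and $U_{i,\Q_p}$ the slot indexed by $i+1$ for $1\le i\le n-1$, one reads off
$$
H_{\Q_p}=(V_{\Q_p}\otimes U_{1,\Q_p}\otimes\cdots\otimes U_{n-1,\Q_p})^{\oplus 2^{\epsilon(D)}}.
$$

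The further decomposition of $V_{\Q_p}\otimes_{\Q_p}\Q_{p^r}$ is obtained by base-changing to $F_1=\Q_{p^r}$ and using the natural isomorphism displayed just before the lemma,
$$
\Cor_{F_1|\Q_p}(D\otimes_F F_1)\otimes_{\Q_p}F_1\cong\bigotimes_{i=0}^{r-1}(D\otimes_F F_1\otimes_{F_1,\sigma^i}F_1),
$$
together with the splitting $D\otimes_F F_1\cong M_2(F_1)$. Each of the $r$ tensor factors on the right is $M_2(F_1)$ acting on a rank-two $F_1$-space; the diagonal embedding $\GL_2(\Q_{p^r})\hookrightarrow\prod_{i=0}^{r-1}\GL_2(\Q_{p^r})$, $g\mapsto(g,\sigma(g),\ldots,\sigma^{r-1}(g))$, identifies these factors with the successive $\sigma$-conjugates of a single two-dimensional $\Q_{p^r}$-representation $V_1$. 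The analogous tensor decomposition of $U_{i,\Q_p}\otimes_{\Q_p}\Q_{p^{r_{i+1}}}$ follows by the same argument with $(F_1,r)$ replaced by $(F_{i+1},r_{i+1})$.

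The only real bookkeeping point, and the step I would be most careful about, is the $\epsilon(D)=1$ case: the image of $\prod_{i=1}^{n}\Cor_{F_i|\Q_p}(D\otimes_F F_i)^*$ inside $\GL_{\Q(\sqrt{b})}(H_{\Q})(\Q_p)\times\GL_2(\Q_p)\subset\GL(H_{\Q})(\Q_p)$ lies in $\Cor_{F|\Q}(D)^*(\Q_p)\times\{1\}$, as recorded in the commutative square preceding the lemma. Consequently the factor $2^{\epsilon(D)}$ enters as a direct-sum multiplicity in the $D^*(\Q_p)$-representation rather than as an additional tensor factor, and the uniform formula stated in the lemma follows.
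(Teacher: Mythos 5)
Your proof is correct and follows essentially the same route as the paper: the lemma is derived by assembling Lemmas 3.1 and 3.2 with the observations that each $\Cor_{F_i|\Q_p}(D\otimes_F F_i)\cong M_{2^{r_i}}(\Q_p)$ (via Assumption 2.1), that $\mathrm{Nm}_{\Q_p}=\prod\mathrm{Nm}_i$ places each local factor in its own tensor slot, that base change to $F_1=\Q_{p^r}$ splits $\Cor_{F_1|\Q_p}(D\otimes_F F_1)$ into the $r$ Frobenius conjugates giving $V_1\otimes V_{1,\sigma}\otimes\cdots\otimes V_{1,\sigma^{r-1}}$, and that in the $\epsilon(D)=1$ case the commuting square forces the image into $\Cor_{F|\Q}(D)^*(\Q_p)\times\{1\}$ so the extra $\GL_2$-slot carries the trivial action and contributes a $\oplus 2$ multiplicity. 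This matches the paper exactly, which states the lemma as a summary of the preceding discussion with no separate proof.
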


\begin{lemma}\label{lemma on lattice decomposition}
Let $P$ be a topological group together with a continuous linear
representation on a finite dimensional $\Q_{p^s}$-vector space
$W$. Assume the following two conditions hold:
\begin{itemize}
    \item [(i)] The representation factors as
$$
P\to \GL(W_1)\times \SL(W_2)\stackrel{\otimes}{\longrightarrow}
\GL(W),
$$
where $W_i, i=1,2$ are two $\Q_{p^s}$-vector spaces.
    \item [(ii)] There is a $\Z_{p^s}$-lattice $W_{\Z_{p^s}}$ in
$W$, which is stable under the $P$-action and
    admits a lattice tensor decomposition
    $$W_{\Z_{p^s}}=W_{1,\Z_{p^s}}\otimes_{\Z_{p^s}}W_{2,\Z_{p^s}},$$
    where $W_{i,,\Z_{p^s}}$ is a $\Z_{p^s}$-lattice of $W_i$ for $i=1,2$.
\end{itemize}
Then in the factorization of (i), the lattice $W_{i,,\Z_{p^s}}$
for $i=1,2$ is stable under the $P$-action on $W_i$.
\end{lemma}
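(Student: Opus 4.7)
The plan is to reduce the statement to a calculation in Smith normal form over the complete DVR $\Z_{p^s}$, with the $\SL$-hypothesis providing exactly the rigidity needed to kill the scalar ambiguity inherent in any tensor decomposition. Fix $g\in P$ and write its image as $(g_1,g_2)\in \GL(W_1)\times \SL(W_2)$. Since both $g$ and $g^{-1}$ preserve $W_{\Z_{p^s}}=W_{1,\Z_{p^s}}\otimes_{\Z_{p^s}}W_{2,\Z_{p^s}}$, the operator $g_1\otimes g_2$ lies in $\GL(W_{\Z_{p^s}})$, and I must upgrade this to $g_1\in \GL(W_{1,\Z_{p^s}})$ and $g_2\in \GL(W_{2,\Z_{p^s}})$ individually.

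Choose $\Z_{p^s}$-bases of $W_{1,\Z_{p^s}}$ and $W_{2,\Z_{p^s}}$, and represent $g_i$ by a matrix $A_i\in \GL(W_i)$ over $\Q_{p^s}$. Because $\Z_{p^s}$ is a DVR with uniformizer $p$, the Smith normal form gives a factorization $A_i=U_iD_iV_i$ with $U_i,V_i\in \GL(W_{i,\Z_{p^s}})$ and $D_i=\mathrm{diag}(p^{a_{i,1}},\ldots,p^{a_{i,\dim W_i}})$ for integer exponents $a_{i,j}$ (possibly negative, obtained after clearing denominators). Then
\[
A_1\otimes A_2=(U_1\otimes U_2)\,(D_1\otimes D_2)\,(V_1\otimes V_2),
\]
and the outer factors, being tensor products of lattice automorphisms, already lie in $\GL(W_{\Z_{p^s}})$. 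Hence $D_1\otimes D_2$ must lie there too. Since its diagonal entries are $p^{a_{1,i}+a_{2,j}}$, this forces $a_{1,i}+a_{2,j}=0$ for all $i,j$; consequently all $a_{1,i}$ equal a single integer $a$ and all $a_{2,j}$ equal $-a$. Thus $A_1=p^a(U_1V_1)$ and $A_2=p^{-a}(U_2V_2)$, where $U_iV_i\in \GL(W_{i,\Z_{p^s}})$. At this point the $\SL$-hypothesis enters: $\det A_2=1$ gives $p^{-a\dim W_2}\cdot (\mathrm{unit})=1$, whence $a=0$. Therefore $A_1=U_1V_1$ and $A_2=U_2V_2$ preserve the respective lattices, which is the claim.

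The proof has no genuinely difficult step; the entire content of the lemma is that the $\SL$-hypothesis exactly cancels the scalar rescaling $(\lambda g_1,\lambda^{-1}g_2)$ which is the only ambiguity in a tensor decomposition $g_1\otimes g_2$. The Smith normal form repackages the lattice hypothesis into the clean equations $a_{1,i}+a_{2,j}=0$, and the remaining bookkeeping—namely that the outer pieces $U_1\otimes U_2$ and $V_1\otimes V_2$ really are $\Z_{p^s}$-lattice automorphisms of $W_{\Z_{p^s}}$—is automatic since a tensor product of unimodular matrices is unimodular.
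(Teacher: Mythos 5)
Your proof is correct, but it takes a genuinely different route from the paper. The paper argues by compactness: since the tensor map $\GL(W_1)\times\SL(W_2)\to\GL(W)$ has finite kernel (this is exactly where the $\SL$-hypothesis enters there), the preimage $T$ of $\GL(W_{\Z_{p^s}})$ is a compact subgroup of $\GL(W_1)\times\SL(W_2)$ containing the maximal compact subgroup $\GL(W_{1,\Z_{p^s}})\times\SL(W_{2,\Z_{p^s}})$, hence equals it; the image of $P$ lands in $T$ and the claim follows. You instead work concretely with elementary divisors over the DVR $\Z_{p^s}$, and the $\SL$-hypothesis enters as the determinant condition $\det A_2=1$ forcing the common exponent $a$ to vanish. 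Both arguments isolate the same obstruction—the scalar ambiguity $(\lambda g_1,\lambda^{-1}g_2)$ in a tensor factorization—and both use the $\SL$-hypothesis precisely to kill it, but in different guises: for the paper it makes a kernel finite so that compactness is preserved under preimage, for you it pins down the determinant. Your approach is more elementary (no appeal to maximal compactness of $\GL_n$ of a DVR) and even slightly more informative about the structure of the failure, while the paper's is more structural and adapts more readily to situations where one replaces $\GL\times\SL$ by other reductive groups and where an explicit Smith-normal-form computation would be less convenient.
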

\begin{proof}
Consider the following commutative diagram:
$$
\xymatrix{&\GL(W_{1,\Z_{p^s}})\times
\SL(W_{2,\Z_{p^s}})\ar[r]^{\quad \quad\quad\otimes }\ar[d]_{\cap}
&\GL(W_{\Z_{p^s}})\ar[d]^{\cap}\\
 P\ar[r]^{}\ar[r]^{}&\GL(W_1)\times \SL(W_2)
\ar[r]^{\quad\quad\otimes}&\GL(W).}
$$
It suffices to show that the representation $P\to
\GL(W_{\Z_{p^s}})\subset \GL(W)$ factors through
 $$\GL(W_{1,\Z_{p^s}})\times \SL(W_{2,\Z_{p^s}})\to
\GL(W_{\Z_{p^s}}).$$ Note that $\GL(W_{\Z_{p^s}})$ is a compact
subgroup of $\GL(W)$. As the morphism $\otimes$ has a finite
kernel,
 $$
T:=\otimes^{-1}(\GL(W_{\Z_{p^s}})\cap \otimes(\GL(W_1)\times
\SL(W_2)))
 $$
is a compact subgroup of $\GL(W_1)\times \SL(W_2)$. Since $T$
contains $\GL(W_{1,\Z_{p^s}})\times \SL(W_{2,\Z_{p^s}})$, which is
maximal compact, it holds that $T=\GL(W_{1,\Z_{p^s}})\times
\SL(W_{2,\Z_{p^s}})$. Since the image of $P$ in $\GL(W_1)\times
\SL(W_2)$ is contained in $T$ by assumption, the morphism $P\to
\GL(W_{\Z_{p^s}})$ factors through
 $\GL(W_{1,\Z_{p^s}})\times \SL(W_{2,\Z_{p^s}})\to
\GL(W_{\Z_{p^s}}) $.  This proves the lemma.
\end{proof}
\begin{proposition}\label{tensor decomposition of linear
representation} The $K_p$-representation $H_{\Z_p}$ admits a
natural direct-tensor decomposition
$$H_{\Z_p}=(V\otimes U)^{\oplus 2^{\epsilon(D)}},$$ where $U$
decomposes into $U=U_1\otimes\cdots \otimes U_{n-1}$. The tensor
factor $V$ after tensorizing with $\Z_{p^r}$ decomposes further into
$$
V\otimes_{\Z_{p}} \Z_{p^r}=
V_{1}\otimes_{\Z_{p^r}}V_{1,\sigma}\otimes_{\Z_{p^r}}
\cdots\otimes_{\Z_{p^r}} V_{1,\sigma^{r-1}}.
$$
Similar for other tensor factors $U_i, 1\leq i\leq n-1$ after
tensorizing with $\Z_{p^{r_{i+1}}}$.
\end{proposition}
\begin{proof}
Recall that $K_p$ is conjugate to the image of
$$
\tilde K_p:=(\Z_p^*\times \SL_2(\sO_{F_{\mathfrak{p}}}))\times
\prod_{i=2}^{n}\SL_2(\sO_{F_{i}})\subset \tilde
G(\Q_p)
$$
under the map $N_{\Q_p}: \tilde G(\Q_p)\to G(\Q_p)$. The
direct-tensor decomposition of $H_{\Q_p}$ as
$D^*(\Q_p)$-representation in Lemma \ref{tensor decomposition wrt
D star} induces a direct-tensor decomposition of $H_{\Q_p}$ as
$\tilde K_p$-representation. Since the $\tilde K_p$-action on
$H_{\Q_p}$ factors through the $K_p$-action on $H_{\Q_p}$ by
definition, one obtains the direct-tensor decomposition of
$H_{\Q_p}$ for the $K_p$-action as well. By the definition of the
lattice $H_{\Z}$, it is easy to see that $H_{\Z_p}$ decomposes
into a direct-tensor product of $\Z_p$-lattices. Then it is also a
direct-tensor decomposition as $K_p$-representation by Lemma
\ref{lemma on lattice decomposition}. The proofs of the tensor
decompositions for the factors $V$ and $U_{\cdot}$ are analogous.
\end{proof}
By Proposition 2.2.4 \cite{Ki}, each geometrically connected
component of $M^0$ is defined over an unramified extension of
$F_{\mathfrak p}$. Since there is a finite number of them, we can
fix a finite extension $L$ of $F_{\mathfrak p}$ inside the maximal
unramified extension $\Q_{p}^{ur}$ such that each component
defines and admits an $L$-rational point.
\begin{corollary}\label{tensor decompositon of local system}
One has a direct-tensor decomposition of \'{e}tale local systems
over $M^0\times_{F_{\mathfrak p}} L$:
$$
\HH=(\V\otimes \U)^{\oplus 2^{\epsilon(D)}}, \U=\U_1\otimes\cdots
\otimes \U_{n-1}
$$
and
$$
\V\otimes_{\Z_{p}}
\Z_{p^r}\cong\V_{1}\otimes_{\Z_{p^r}}\V_{1,\sigma}\otimes_{\Z_{p^r}}
\cdots\otimes_{\Z_{p^r}} \V_{1,\sigma^{r-1}},
$$
where for $0\leq i\leq r-1$, $\V_{1,\sigma^{i}}$ is the
$\sigma^i$-conjugate of $\V_1$. Similarly for $\U_i, 1\leq i\leq
n-1$ one has
$$
\U_i\otimes_{\Z_{p}}
\Z_{p^{r_{i+1}}}\cong\U_{i,1}\otimes_{\Z_{p^{r_{i+1}}}}\U_{i,\sigma}\otimes_{\Z_{p^{r_{i+1}}}}
\cdots\otimes_{\Z_{p^{r_{i+1}}}} \U_{1,\sigma^{r_{i+1}-1}}.
$$
\end{corollary}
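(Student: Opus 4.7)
The plan is to translate Proposition \ref{tensor decomposition of linear representation} from the group-theoretic side to the side of étale local systems via the monodromy representation attached to $\HH$. The role of the extension to $L$ is to guarantee that each connected component of $M^0\times_{F_{\mathfrak p}}L$ carries a well-defined fundamental group with a chosen base point.

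First, I would fix a geometric point $\bar x$ on each connected component; such points exist by our choice of $L$. The étale $\Z_p$-local system $\HH$ then corresponds, component by component, to a continuous representation
$$
\rho: \pi_1^{et}(M^0\times_{F_{\mathfrak p}}L,\bar x)\longrightarrow \GL(H_{\Z_p}).
$$
Next, I would show that $\rho$ factors through $K_p\hookrightarrow \GL(H_{\Z_p})$. This is where the Hodge-type construction of $\sM_K$ is essential: by the definition of $\sM_K$ as the normalization of the closure of the Hodge embedding $M_K\hookrightarrow Sh_{K'}(\GSp(\psi),X(\psi))$ and by the pullback construction of $f$ from $\sA_{K'}$, the Tate module $\HH$ carries the family of $G_{\Z_p}$-invariant tensors coming from $\Cor_{F|\Q}\sO_D$ and the symplectic form (see \S2.3 of \cite{Ki}). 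The image of $\rho$ therefore preserves these tensors and so lands in $K_p$.

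Once the factorization through $K_p$ is established, Proposition \ref{tensor decomposition of linear representation} supplies a $K_p$-equivariant tensor decomposition
$$
H_{\Z_p}=(V\otimes U_1\otimes \cdots\otimes U_{n-1})^{\oplus 2^{\epsilon(D)}},
$$
together with the finer decomposition of $V\otimes_{\Z_p}\Z_{p^r}$ (resp.\ each $U_i\otimes_{\Z_p}\Z_{p^{r_i}}$) as an iterated tensor of $\sigma^{\cdot}$-conjugates. Pulling these $K_p$-decompositions back along $\rho$ yields the claimed decompositions of étale local systems on $M^0\times_{F_{\mathfrak p}}L$. The individual factor $\V_{1,\sigma^i}$ corresponds to the $\pi_1$-representation $V_{1,\sigma^i}$; the hypothesis that $L$ be sufficiently large (in particular, containing the unramified $\Q_{p^r}\subset F_{\mathfrak p}$) ensures that each such tensor factor is a genuine $\Z_{p^r}$-local system on each component.

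The main obstacle is the monodromy factorization in the second step: one must verify that the $G_{\Z_p}$-structure on the Tate module, encoded by the Hodge cycles and the polarization on the universal abelian variety, is actually parallel in the étale topology over $M^0$. This is a basic compatibility property of the integral canonical model of a Shimura variety of Hodge type, essentially part of Kisin's construction, but it is the only genuinely non-formal input required; once granted, the corollary follows from Proposition \ref{tensor decomposition of linear representation} by functoriality of the dictionary between étale $\Z_p$-local systems and continuous $\pi_1^{et}$-representations.
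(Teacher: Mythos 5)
Your overall strategy matches the paper's: reduce to showing that the monodromy representation attached to $\HH$ factors through $K_p\subset\GL(H_{\Z_p})$, and then apply Proposition~\ref{tensor decomposition of linear representation}. The difference lies in how the factorization through $K_p$ is established. You appeal to the parallelism of the $G_{\Z_p}$-invariant tensors in the Tate module, which you correctly identify as the only genuinely non-formal input (it is part of Kisin's \S2.2 and ultimately rests on Deligne's theorem that Hodge cycles on abelian varieties are absolutely Hodge). The paper instead proceeds more concretely by using an $L$-rational point of a geometrically connected component $M^0_1$ to split $\pi_1^{arith}(M^0_1)$ as $\pi_1^{geo}(M^0_1)\cdot\Gal(\bar\Q_p|L)$ and then treating each factor separately: for $\pi_1^{geo}$ it uses the topological uniformization $\pi_1^{top}(M^0_1(\C))=K\cap G(\Q)_+$, whose image in $\GL(H_{\hat\Z})$ visibly lies in $K$ and hence, after projection, in $K_p$; for $\Gal(\bar\Q_p|L)$ it quotes Lemma~2.2.1 of \cite{Ki}. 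Both routes are legitimate and both ultimately rest on Kisin's \S2.2, but the paper's splitting argument replaces the appeal to tensor-parallelism over the geometric monodromy by the elementary uniformization of the Shimura curve, which is why the passage to $L$ (and the $L$-rational points) is needed. One small correction: the role of $L$ is not to ensure $\Q_{p^r}\subset L$ — under Assumption~\ref{assumption on p from integral model} the prime $p$ is unramified in $F$, so $F_{\mathfrak p}=\Q_{p^r}$ already — but to make the geometrically connected components of $M^0$ defined over $L$ with $L$-rational points, supplying the splitting used above.
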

\begin{proof}
Let $M^0\times_{F_{\mathfrak p}} L=\bigsqcup_i M^0_i$ be the
disjoint union of its geometrically connected components, and let
$M^0_1$ be the component which is represented by the double coset
$[1]\in G(\Q)_+\backslash G(\A_f)/K$. It suffices to show that the
tensor decomposition of the restriction $\HH$ to $M^0_1$. Consider
the short exact sequence of \'{e}tale fundamental groups:
$$
1\to \pi_1^{geo}(M^0_1)\to \pi_1^{arith}(M^0_1)\to \Gal(\bar
\Q_p|L)\to 1.
$$
An $L$-rational point of $M^0$ induces a splitting of the exact
sequence, and one writes
$$
\pi_1^{arith}(M^0_1)=\pi_1^{geo}(M^0_1)\cdot \Gal(\bar\Q_p|L).
$$
To show the tensor decomposition of $\HH|_{M^0_1}$, it is to show
that the factorization of $\pi_1^{geo}(M^0_1)\to K_p$ and
$\Gal(\bar \Q_p|L)\to K_p$. The latter follows from the proof of
Lemma 2.2.1 \cite{Ki}. The former goes as follows: it is known
that $\pi_1^{top}(M^0_1(\C))$ is equal to $K\cap G(\Q)_+$, and the
representation $\pi_1^{geo}(M^0_1)\to \GL(H_{\Z_p})$ is the
composite of
$$
\pi_1^{geo}(M^0_1)\cong\hat{\pi}_1^{top}(M^0_1)\stackrel{\widehat{}}{\to}\GL(H_{\hat{\Z}})\twoheadrightarrow
\GL(H_{\Z_p}).
$$
Obviously the representation $\pi_1^{top}(M^0_1)=K\cap G(\Q)_+\to
\GL(H_{\Z})$ factors through $K\subset \GL(H_{\Z})$. Hence the
result follows from Proposition \ref{tensor decomposition of
linear representation}.
\end{proof}
Specializing the above tensor decompositions of \'{e}tale local
systems into a closed point, one obtains the following
\begin{corollary}\label{tensor decomposition of galois
representation} Let $E$ be a finite extension of $L$ and $x^0$ an
$E$-rational point of $M^0$. Let $H_{\Z_p}=H^1_{et}(\bar A_{x^0},
\Z_p)$ and
 $
\rho: \Gal_E\to \GL(H_{\Z_p})
 $
be the associated Galois representation. Then one has a
direct-tensor decomposition of $\Gal_E$-modules
$$
H_{\Z_p}=(V_{\Z_p} \otimes U_{\Z_p})^{\oplus 2^{\epsilon(D)}},
$$
and a further tensor decomposition of $V_{\Z_p}$ after tensorizing
with $\Z_{p^r}$
$$
V_{\Z_p}\otimes_{\Z_{p}} \Z_{p^r}=
V_{1}\otimes_{\Z_{p^r}}V_{1,\sigma}\otimes_{\Z_{p^r}}
\cdots\otimes_{\Z_{p^r}} V_{1,\sigma^{r-1}}.
$$
\end{corollary}

\subsection{Each tensor factor is potentially crystalline}
It is standard that $H_{\Q_p}$ is a polarisable crystalline
representation of Hodge-Tate weights $\{0,1\}$. In the following
we will show that each factor appearing in the direct-tensor
decomposition of Corollary \ref{tensor decomposition of galois
representation} is potentially crystalline.
\begin{proposition}\label{first crystalline tensor decomposition}
$U_{\Q_p}$ is a potentially unramified representation. As a
consequence, both $V_{\Q_p}$ and $U_{\Q_p}$ are potentially
crystalline.
\end{proposition}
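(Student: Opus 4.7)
My plan is to establish that $U_{\Q_p}$ is potentially unramified by showing that all of its Hodge--Tate weights vanish, and then to derive the potentially crystalline property of both tensor factors as an easy consequence.

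First I would trace the Hodge cocharacter $\mu_0:\G_m\to G_{\bar\Q_p}$ (obtained from $h_0$ via the embedding $\iota:\bar\Q\hookrightarrow\bar\Q_p$), which governs the Hodge filtration on $D_{dR}(H_{\Q_p})=H^1_{dR}(A_{x^0})$, through the central isogeny $N:\tilde G\to G$. By the very construction of $u_0$, any lift $\tilde\mu_0$ is non-trivial only on the $\SL_2$-factor of $\tilde G'_\R$ attached to the split archimedean place $\tau=\tau_1$, and it is the identity on each of the $\SU(2)$-factors attached to $\tau_i$ with $i>1$. Under the tensor decomposition of Corollary \ref{tensor decomposition of galois representation}, the factor $V_{\Q_p}$ arises from the $p$-adic place $\mathfrak p=\mathfrak p_1$ (which contains $\tau$) while each $U_i$ arises from a place $\mathfrak p_{i+1}$ (which does not); hence $\mu_0$ acts trivially on the whole factor $U_{\Q_p}$. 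Applying Theorem \ref{HT implies de-rham } (Di Matteo) to the crystalline, hence Hodge--Tate, representation $H_{\Q_p}$ and its tensor decomposition, one deduces that $U_{\Q_p}$ is itself Hodge--Tate; combined with the previous cocharacter calculation, all of its Hodge--Tate weights are zero. A classical theorem of Sen then ensures that $U_{\Q_p}$ is potentially unramified, and in particular potentially crystalline.

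For $V_{\Q_p}$, I would pass to a finite extension $E'/E$ over which $U|_{E'}$ is unramified, hence crystalline; then $H|_{E'}$ remains crystalline, so the summand $V\otimes U|_{E'}$ is crystalline, and the natural injection $D_{cris}(V|_{E'})\otimes_{E'_0}D_{cris}(U|_{E'})\hookrightarrow D_{cris}(V\otimes U|_{E'})$ becomes an equality of $E'_0$-dimensions (since $D_{cris}$ decouples along a crystalline factor), which forces $V|_{E'}$ to be crystalline and hence $V_{\Q_p}$ potentially crystalline. The main obstacle I anticipate is the appeal to Theorem \ref{HT implies de-rham }: without this recent input of Di Matteo, the cocharacter calculation above only controls the Hodge filtration on $D_{dR}(H_{\Q_p})$ as a whole, and extracting the Hodge--Tate structure of the tensor factor $U_{\Q_p}$ (in particular even the a priori statement that $U_{\Q_p}$ is Hodge--Tate) would not be directly accessible by classical $p$-adic Hodge theory.
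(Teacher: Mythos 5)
Your plan for $U_{\Q_p}$ contains a circular use of Theorem~\ref{HT implies de-rham }. That theorem takes as \emph{hypothesis} that the tensor product is de Rham \emph{and} that one of the two factors is already known to be Hodge--Tate, and concludes that both factors are de Rham. You are trying to use it in the reverse direction: starting only from $H_{\Q_p}$ being Hodge--Tate (no factor known Hodge--Tate), you wish to extract that $U_{\Q_p}$ is Hodge--Tate. That is precisely the decoupling that is \emph{not} available from the statement of the theorem, and it is also the obstruction you yourself flag at the end. Tracing the cocharacter $\mu_0$ through the isogeny only tells you something about a cocharacter in $G(\C_p)$; without knowing a priori that $U_{\Q_p}$ is Hodge--Tate you cannot speak of its Sen operator or Hodge--Tate cocharacter, so the ``all Hodge--Tate weights of $U$ vanish'' step has no content yet.

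The paper avoids this by arguing at the level of the group $G$ rather than at the level of the tensor factors. Two ingredients do the work: (i) the Blasius--Wintenberger theorem identifies the $G(\C_p)$-conjugacy class of the Hodge--Tate cocharacter $\mu_{HT}$ of the crystalline representation $H_{\Q_p}$ with the base change of $C_{HdR}$; this is the rigorous version of the step you describe as ``$\mu_0$ governs the Hodge filtration on $D_{dR}(H_{\Q_p})$'', and it should be cited explicitly rather than taken as obvious; (ii) Sen's theorem is applied to the whole representation $\rho:\Gal_E\to G(\Q_p)\subset\GL(H_{\Q_p})$ to identify the Zariski closure of $\rho(I_E)$ inside $G(\Q_p)$ with the Zariski closure of $\mu_{HT}$. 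One then projects this group-theoretic statement through the map of groups onto the $\GL(U)$-factor (where the projection of $\mu_{HT}$, being trivial up to a finite power by the cocharacter computation, has finite image), to conclude that $\rho(I_E)$ has finite image in $\GL(U_{\Q_p})$. This never needs the Sen operator of $U_{\Q_p}$ in isolation, which is why Di Matteo's theorem is not required here. In the paper Di Matteo's theorem is invoked only later, in Proposition~\ref{second crystalline tensor decomposition}, at a point where $V_{\Q_p}$ has already been shown to be de Rham and the Hodge--Tate property of the factor $V_1$ is established directly by a Sen operator computation, so the hypothesis of the theorem is genuinely in force.

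Your argument for the ``consequently'' part concerning $V_{\Q_p}$ is fine: passing to $E'$ where $U$ becomes unramified, hence crystalline, and using the natural isomorphism $D_{cris}(V\otimes U)\cong D_{cris}(V)\otimes D_{cris}(U)$ when $U$ is crystalline, one forces $\dim D_{cris}(V)=\dim V$. This is a slightly different packaging of the paper's argument (which realizes $V_{\Q_p}$ as a subrepresentation of $V_{\Q_p}\otimes U_{\Q_p}\otimes U_{\Q_p}^*$ and uses that subrepresentations of potentially crystalline representations are potentially crystalline), and both are correct.
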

\begin{proof}
Let $I_E\subset \Gal_E$ be the inertia group. We claim that the
image of $I_E$ in $\GL(U_{\Q_p})$ is finite. Assuming the claim,
one sees that $U_{\Q_p}$ is potentially unramified and hence
potentially crystalline. Clearly $V_{\Q_p}\otimes U_{\Q_p}$, as a
direct factor of $H_{\Q_p}$, is crystalline. Therefore $V_{\Q_p}$,
that is a subrepresentation of $V_{\Q_p}\otimes U_{\Q_p}\otimes
U_{\Q_p}^\vee$, is also potentially crystalline. To show the claim,
we introduce the Hodge-Tate cocharacter
 $
\mu_{HT}: \G_m(\C_p)\to G(\C_p)\subset \GL(H_{\C_p})
 $ induced by the Hodge-Tate decomposition of
$H_{\Q_p}$ and the Hodge-de Rham cocharacter
 $
\mu_{HdR}: \G_m(\C)\to G(\C)\subset \GL(H_\C)
 $
induced by the Hodge decomposition of $H^1_{B}(A(\C),\Q)$. Let
$C_{HdR}$ (resp. $C_{HT}$) be the $G(\C)$ (resp.
$G(\C_p)$)-conjugacy class of $\mu_{HdR}$ (resp. $\mu_{HT}$). Then
$C_{HdR}$ is defined over the reflex field $\tau(F)\subset \C$ of
$(G,X)$. It follows from a result of Blasius and Wintenberger (see
Theorem 0.3 \cite{Bl} and Proposition 7 \cite{Wi}, see also
Theorem 4.2 \cite{Og}) that
$$
C_{HT}=C_{HdR}\otimes_{F,\tau}\C_p,
$$
where $\tau: F\to \C_p$ is the composite
 $
F\stackrel{\tau}{\to} \bar \Q\hookrightarrow \bar \Q_p\subset
\hat{\bar{\Q}}_p=\C_p
 $.
Since $\tilde G\to G$ is a central isogeny, there is a natural
number $a$ such that the $a$-th power ${\mu_{HdR}}^a$ (resp.
${\mu_{HT}}^a$) lifts to a cocharacter into $\tilde G(\C)$ (resp.
$\tilde G(\C_p)$). Consider the projection of ${\mu_{HdR}}^a$ to
an $\SL_2$-factor in the decomposition
 $
\tilde G(\C)=\C^*\times \SL_2(\C)\times \cdots\times \SL_2(\C)
 $,
where the order of $\SL_2$-factors is arranged according to
$\Psi$. By the definition of $\tilde h_0$ in \S\ref{Section on
Shimura curve of Hodge type}, one sees that \emph{only} the
projection to the first $\SL_2$-factor (corresponding to $\tau$)
is nontrivial. By the above identification, the same situation
holds for the projections of ${\mu_{HT}}^a$ to $\SL_2$-factors in
the decomposition
$$
\tilde G(\C_p)=\C_p^*\times \SL_2(\C_p)\times \cdots\times
\SL_2(\C_p),
$$
where the order of $\SL_2$-factors is arranged according to
$\Hom_{\Q}(F,\bar{\Q}_p)$ which has been identified with $\Psi$.
By the construction of $U$-factor, the projection of
${\mu_{HT}}^a$ to the factor $\GL(U_{\C_p})$ is trivial. So the
projection of $\mu_{HT}$ to $\GL(U_{\C_p})$ is finite. By S. Sen's
theorem (see \cite{Sen}), the Zariski closure of $\rho(I_E)\subset
G(\Q_p)$ is equal to the $\Q_p$-Zariski closure of $\mu_{HT}$. So
the image of $I_E$ in $\GL(U_{\Q_p})$ is finite.
\end{proof}

For a finite extension $E$ of $\Q_p$ let $E_0\subset E$ be the
maximal unramified subextension. Recall that
\begin{definition}
A $\Q_{p^r}$-representation of $\Gal_E$ is a finite dimensional
$\Q_{p^r}$-vector space $V$ equipped with a continuous action $
\Gal_E\times V\to V
 $
satisfying
$$
g(v_1+v_2)=g(v_1)+g(v_2),\ g(\lambda v)=g(\lambda)g(v)
$$
for $g\in \Gal_E$, $\lambda\in \Q_{p^r}$ and $v,v_1,v_2\in V$. It
is called \emph{Hodge-Tate} (resp. \emph{de-Rham},
\emph{crystalline}) $\Q_{p^r}$-representation if it is so as a
$\Q_p$-representation.
\end{definition}

The following result is known among experts. A variant of it was
communicated by L. Berger to the first named author during the
$p$-adic Hodge theory workshop in ICTP, 2009. The first official
proof should appear in the Ph.D thesis of G. Di Matteo (see the
recent preprint \cite{Ma}). Another proof has been communicated to
us by L. Xiao (see \cite{Xiao}).

\begin{theorem}\label{HT implies de-rham }
Let $V$ and $W$ be two $\Q_{p^r}$-representations of $\Gal_E$. If
$V\otimes_{\Q_{p^r}}W$ is de Rham and one of the tensor factors is
Hodge-Tate, then each tensor factor is de Rham.
\end{theorem}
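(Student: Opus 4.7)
The plan is to proceed in two stages: first, using Sen theory, show that $W$ is Hodge--Tate; then, using Fontaine's refinement of Sen theory over $B_{dR}^+$, upgrade both $V$ and $W$ from Hodge--Tate to de Rham. After possibly enlarging $E$ unramifiedly so as to contain $\Q_{p^r}$ (which preserves the de Rham and Hodge--Tate properties), the splitting $\widehat{E}_\infty \otimes_{\Q_p}\Q_{p^r} \cong \prod_{i=0}^{r-1}\widehat{E}_\infty$ by embeddings decomposes the Sen module of any $\Q_{p^r}$-representation $U$ as $U_{Sen} = \bigoplus_i U_{Sen,i}$, each $U_{Sen,i}$ an $\widehat{E}_\infty$-vector space of dimension $\dim_{\Q_{p^r}}U$ equipped with its own Sen operator $\Theta_{U,i}$. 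Hodge--Tate of $U$ is equivalent to each $\Theta_{U,i}$ being semisimple with integer eigenvalues, and the Sen functor is monoidal: $(V\otimes_{\Q_{p^r}}W)_{Sen,i} = V_{Sen,i}\otimes_{\widehat{E}_\infty} W_{Sen,i}$ with Sen operator $\Theta_{V,i}\otimes 1 + 1\otimes \Theta_{W,i}$. Since $V\otimes W$ is de Rham, in particular Hodge--Tate, each such tensor-sum operator is semisimple with integer eigenvalues; combined with the Hodge--Tate hypothesis on $V$, the standard linear algebra of tensor-sum operators then forces each $\Theta_{W,i}$ to be semisimple with integer eigenvalues, so $W$ is Hodge--Tate.

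Next I would pass from Hodge--Tate to de Rham via Fontaine's $B_{dR}^+$-refinement $D_{\mathrm{dif}}^+$, which attaches to $U$ a free $\widehat{E}_\infty[[t]]$-module of appropriate rank with a $\Gamma_E$-action, and is characterised by the property that $U$ is de Rham iff $D_{\mathrm{dif}}^+(U)[1/t]$ admits a $\Gamma_E$-invariant basis. This functor is again monoidal and respects the embedding decomposition. For Hodge--Tate $V$ one can choose a basis of $D_{\mathrm{dif}}^+(V)$ in each embedding-piece adapted to the Hodge--Tate weights of $V$, so that the Galois action is given, up to a $t$-adically nilpotent correction, by twists of the cyclotomic character by those weights. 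Relative to such a trivialisation, the de Rham condition on $V\otimes W$ decomposes weight-by-weight into the de Rham condition on Tate twists of $W$, and stability of de Rham under Tate twist then produces a de Rham structure on $W$; the symmetric argument, now using that $W$ has been shown to be Hodge--Tate, produces de Rham on $V$.

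The hard part is the de Rham step. While the Hodge--Tate step reduces to eigenvalue combinatorics of commuting semisimple operators, the de Rham upgrade must handle the $t$-adically nilpotent perturbations in $D_{\mathrm{dif}}^+$ whose nonvanishing is exactly the failure of a Hodge--Tate representation to be de Rham -- an extension-theoretic obstruction in groups of the form $H^1(\Gal_E,\C_p(h))$, which are in general nonzero. One must show that the vanishing of these obstructions for $V\otimes W$, together with the Hodge--Tate nature of $V$, forces their vanishing for $W$ separately. This is delicate, because extensions can combine in non-obvious ways under tensor product, and ruling out such compensations is what requires the fine analysis of connections on $B_{dR}^+$-modules (equivalently, on $(\varphi,\Gamma)$-modules over the Robba ring) carried out in the thesis of Di Matteo \cite{Ma} and the alternative approach of Xiao \cite{Xiao}.
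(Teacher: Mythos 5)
The paper does not prove this theorem; it is presented as a result ``known among experts,'' with pointers to Di Matteo \cite{Ma} and Xiao \cite{Xiao} for the actual proof, so there is no internal argument to compare your plan against.

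Your Sen-theoretic first stage is correct and essentially complete: after enlarging $E$ unramifiedly so that $\Q_{p^r}\subset E$, the Sen module of a $\Q_{p^r}$-representation splits by embeddings into pieces $U_{\mathrm{Sen},i}$, the Sen operator of $V\otimes_{\Q_{p^r}}W$ on the $i$-th piece is $\Theta_{V,i}\otimes 1 + 1\otimes\Theta_{W,i}$, and since $\Theta_{V,i}\otimes 1$ commutes with $1\otimes\Theta_{W,i}$, subtracting a semisimple operator with integer eigenvalues ($\Theta_{V,i}\otimes 1$) from the commuting semisimple-with-integer-eigenvalues sum leaves $1\otimes\Theta_{W,i}$ semisimple with integer eigenvalues, whence $W$ is Hodge--Tate.

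Your second stage is where the gap is, and you acknowledge this yourself. The middle paragraph does not yet constitute an argument: a merely Hodge--Tate $V$ does not yield a $\Gamma_E$-invariant basis of $D_{\mathrm{dif}}^+(V)[1/t]$, only one modulo $t$; the nilpotent-in-$t$ part of the $\Gamma_E$-action is exactly the obstruction to $V$ being de Rham, and it does not vanish after tensoring with $D_{\mathrm{dif}}^+(W)$, so the claimed decomposition ``weight-by-weight into the de Rham condition on Tate twists of $W$'' is not available as stated. Your final paragraph correctly diagnoses this difficulty and defers its resolution to \cite{Ma} and \cite{Xiao} --- which is precisely what the paper does. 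So your proposal supplies a valid outline and the easy (Hodge--Tate) half of the argument; the genuinely hard half remains, as in the paper, a pointer to the cited references rather than a self-contained proof.
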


Applying Theorem \ref{HT implies de-rham } to the tensor factor
$V_{\Q_p}$ in Proposition \ref{first crystalline tensor
decomposition}, one obtains the following
\begin{proposition}\label{second crystalline tensor decomposition}
Making an additional finite field extension $E'\subset E''$ if
necessary, one has a further decomposition of
$\Gal_{E''}$-representations:
$$
V_{\Q_p}\otimes\Q_{p^r}\cong V_1\otimes_{\Q_{p^r}}
\cdots\otimes_{\Q_{p^r}} V_{1,\sigma^{r-1}},
$$
where $\Gal_{E''}$ acts on $\Q_{p^r}$ trivially and
$V_{1,\sigma^i}$ is the $\sigma^{i}$-conjugate of $V_1$. Then
$V_{1}$ is potentially crystalline.
\end{proposition}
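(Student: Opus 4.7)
The plan is to bootstrap from Proposition~\ref{first crystalline tensor decomposition}. Choose a finite extension $E''/E$ with $E''\supset\Q_{p^r}$ over which $V_{\Q_p}$ becomes crystalline (possible since $V_{\Q_p}$ is potentially crystalline) and over which the refinement in Corollary~\ref{tensor decomposition of galois representation} is defined. As $\Gal_{E''}$-representations one then has
$$
V_{\Q_p}\otimes_{\Q_p}\Q_{p^r}\;\cong\;V_1\otimes_{\Q_{p^r}}V_{1,\sigma}\otimes_{\Q_{p^r}}\cdots\otimes_{\Q_{p^r}}V_{1,\sigma^{r-1}},
$$
and the left hand side is crystalline, so the tensor product on the right is crystalline, in particular de Rham.

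The crucial step is to establish that $V_1$ is Hodge-Tate, after which Theorem~\ref{HT implies de-rham } takes over. I would revisit the analysis of the Hodge-Tate cocharacter carried out in the proof of Proposition~\ref{first crystalline tensor decomposition}: a suitable power of the lift $\tilde\mu_{HT}$ lies in $\tilde G_{\C_p}=\C_p^{*}\times\prod_{\tau'\in\Hom_{\Q}(F,\C_p)}\SL_2(\C_p)$ and is nontrivial only on the $\SL_2$-factor indexed by the distinguished embedding $\tau=\tau_1$. Under the refined tensor factorization that splits $V\otimes_{\Q_p}\Q_{p^r}$ into the $r$ factors indexed by $\Hom_{\Q_p}(F_{\mathfrak p},\C_p)$, the factor $V_1$ inherits this nontrivial $\SL_2$-direction while the factors $V_{1,\sigma^i}$ for $i\geq 1$ inherit the trivial Hodge cocharacter. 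Sen's theorem then identifies the Hodge-Tate weights of $V_1$ with those read off from $\tilde\mu_{HT}$, so $V_1$ is Hodge-Tate with weights in $\{0,1\}$.

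With this in hand, apply Theorem~\ref{HT implies de-rham } iteratively: write the de Rham product as $V_1\otimes_{\Q_{p^r}}W$ with $W=V_{1,\sigma}\otimes_{\Q_{p^r}}\cdots\otimes_{\Q_{p^r}}V_{1,\sigma^{r-1}}$; since $V_1$ is Hodge-Tate, both $V_1$ and $W$ are de Rham. Because $V_{1,\sigma^i}$ and $V_1$ coincide as $\Q_p$-representations (they differ only by a Frobenius twist of the $\Q_{p^r}$-coefficient action), they share the same Hodge-Tate profile, so one may peel off the factors of $W$ one by one to conclude that every $V_{1,\sigma^i}$ is de Rham. To upgrade de Rham to potentially crystalline, invoke Berger's $p$-adic monodromy theorem so that $V_1$ becomes semistable over a further finite extension; the vanishing of the monodromy of the crystalline tensor product, combined with the symmetry among the $V_{1,\sigma^i}$'s and the constraint that each is $2$-dimensional over $\Q_{p^r}$ with the Hodge-Tate profile determined above, forces the monodromy of each $V_{1,\sigma^i}$ to vanish, yielding the desired potential crystallinity. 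The main obstacle is the Hodge-Tate verification for $V_1$, which demands a careful bookkeeping of the projection of $\mu_{HT}$ through the tensor factorization associated to the Drinfel'd part at $\mathfrak p$.
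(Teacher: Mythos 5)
Your overall route coincides with the paper's: establish that $V_1$ is Hodge--Tate by revisiting the cocharacter analysis from Proposition~\ref{first crystalline tensor decomposition}, invoke Theorem~\ref{HT implies de-rham } to get $V_1$ de Rham, then pass through Berger's $p$-adic monodromy theorem and kill the monodromy operator via the additivity of $N$ over the tensor factors. So the strategy is sound and matches the intended proof.

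There is, however, a real (though fillable) gap in the Hodge--Tate step. You write that Sen's theorem ``identifies the Hodge--Tate weights of $V_1$ with those read off from $\tilde\mu_{HT}$,'' and conclude $V_1$ is Hodge--Tate. But the Sen operator $\Theta_{V_1}$ exists for an arbitrary $p$-adic representation, and $V_1$ is Hodge--Tate precisely when $\Theta_{V_1}$ is \emph{diagonalizable} with \emph{integral} eigenvalues; neither property follows simply from knowing the projection of the cocharacter $\mu_{HT}$ to the $\GL(V_{1,\sigma^i})$-factors. One must first show that $\Theta_{V_1}$ is diagonalizable. The paper does this by writing $\Theta_{V_1}=\bigoplus_i\Theta_{1,\sigma^i}$ (decomposing over the embeddings $\Q_{p^r}\hookrightarrow\C_p$) and observing that $\Theta_V=\sum_i\operatorname{id}\otimes\cdots\otimes\Theta_{1,\sigma^i}\otimes\cdots\otimes\operatorname{id}$; since $\Theta_V$ is diagonalizable (because $V$ is Hodge--Tate), each $\Theta_{1,\sigma^i}$ is diagonalizable. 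Only then does the cocharacter argument pin down the eigenvalues: the projection of $\mu_{HT}$ to the $V_{1,\sigma^i}$-factor for $i\geq 1$ is trivial, so those eigenvalues are $0$, and integrality for $\Theta_1$ follows because $\Theta_V$ has integral eigenvalues. You should insert this diagonalizability argument before appealing to the cocharacter projection.

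A smaller remark on the monodromy step: the clean reason why the monodromy of each $V_{1,\sigma^i}$ vanishes is again the additivity $N_V=\sum_i\operatorname{id}\otimes\cdots\otimes N_{1,\sigma^i}\otimes\cdots\otimes\operatorname{id}$, together with the elementary fact that a sum of this form equals zero only if each $N_{1,\sigma^i}=0$. Your invocation of ``the symmetry among the $V_{1,\sigma^i}$'s'' and the ``Hodge--Tate profile'' is unnecessary and obscures what is really a direct tensor-nilpotency calculation.
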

Each $\sigma$-conjugate $V_{1,\sigma^i}$ is isomorphic to $V_1$ as
$\Q_p$-representation. Thus each tensor factor in the above
decomposition is potentially crystalline as well.
\begin{proof}
Assume $r=2$ for simplicity. The above tensor decomposition
implies the tensor decomposition of $\C_p$-representations:
$$
V_{\Q_p}\otimes_{\Q_p}\C_p\cong(V_1\otimes_{\Q_{p^2}}\C_p)\otimes_{\C_p}(V_{1,\sigma}\otimes_{\Q_{p^2}}\C_p).
$$
Since $V_{\Q_p}$ is crystalline, it is Hodge-Tate. It implies that
the Sen's operator $\Theta_{V}$ of $V_{\Q_p}$ is diagonalizable
over $\C_p$. Let $\Theta_{V_1}$ be the Sen's operator of $V_1$. It
can be written naturally as $\Theta_1\oplus\Theta_{1,\sigma}$
where $\Theta_{1}$ is associated with $V_1\otimes_{\Q_{p^2}}\C_p$
and $\Theta_{1,\sigma}$ to $V_{1,\sigma}\otimes_{\Q_{p^2}}\C_p$.
Thus one has
$$
\Theta_V=\Theta_{1}\otimes id +id\otimes \Theta_{1,\sigma}.
$$
It implies that $\Theta_{1}$ and $\Theta_{1,\sigma}$ are
diagonalizable. Now consider the eigenvalues of them. For that we
use the relation between the Hodge-Tate cocharacter and the
eigenvalues of the Sen's operator: they are related by the maps
$\log$ and $\exp$. Continue the argument about Hodge-Tate
cocharacter in Proposition \ref{first crystalline tensor
decomposition}. So let $\{\tau=\tau_1,\tau_2\}$ be the
$\Gal_{\Q_p}$-orbit of $\tau$. We can assume that in the above
decomposition the $V_1$-factor corresponds to $\tau$. It follows
that the projection of $\mu_{HT}$ to the $V_{1,\sigma}$-factor is
trivial. This implies that the eigenvalues of $\Theta_{1,\sigma}$
are zero. Particularly they are integral. So are those of
$\Theta_1$. Hence $\Theta_{V_1}$ is diagonalizable with integral
eigenvalues. So $V_1$ is Hodge-Tate, and by Theorem \ref{HT
implies de-rham } it is de Rham. By the $p$-adic monodromy
theorem, conjectured by Fontaine and firstly proved by Berger (see
\cite{Be}), it is potentially log crystalline. One shows further
that it is potentially crystalline. Let $N_V$ (resp. $N_{V_1}$) be
the monodromy operator of $V$ (resp. $V_1$). Then one has the
formulas:
$$
N_{V_1}=N_1+N_{1,\sigma}, \ N_V=N_{1}\otimes id+ id\otimes
N_{1,\sigma}.
$$
Since $V$ is crystalline, $N_V=0$. It implies that
$N_1=N_{1,\sigma}=0$. Hence $N_{V_1}=0$ and $V_1$ is potentially
crystalline.
\end{proof}

\subsection{Consequence on the possibilities of the Newton polygon}
We show that the admissibility of a filtered $\phi$-module
associated with a crystalline representation yields a classification
of the Newton polygons in $M_0(\bar k)$. The following functors were
introduced by Fontaine in order to study a
$\Q_{p^r}$-representation:
\begin{definition}
Let $V$ be a $\Q_{p^r}$-representation. For each $0\leq m\leq
r-1$, one defines
$$
D_{crys,r}^{(m)}(V):=(V\otimes_{\Q_{p^r},\sigma^m}B_{crys}
)^{\Gal_E}.
$$
\end{definition}
One defines similarly the functors $\{D_{dR,r}^{(m)}(V)\}_{0\leq
m\leq r-1}$ by replacing $B_{crys}$ with $B_{dR}$. The following
lemma is obvious:
\begin{lemma}\label{restriction of scalar of V}
Let $V$ be a $\Q_{p^r}$-representation. Then there is a natural
isomorphism of $\Gal_{E}$-representations
$$
V\otimes_{\Q_p}\Q_{p^r}\cong\oplus_{m=1}^{r-1}V\otimes_{\Q_{p^r},\sigma^m}\Q_{p^r}.
$$
\end{lemma}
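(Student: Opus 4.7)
The plan is to reduce this to the standard Galois-theoretic decomposition of $\Q_{p^r}\otimes_{\Q_p}\Q_{p^r}$ and then base change by $V$. (I read the sum as $\bigoplus_{m=0}^{r-1}$, since the $m=0$ summand cannot be omitted on dimension grounds.)

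First, since $\Q_{p^r}/\Q_p$ is Galois with cyclic group $\langle\sigma\rangle$, I would invoke the classical isomorphism of $\Q_p$-algebras
$$
\mu:\Q_{p^r}\otimes_{\Q_p}\Q_{p^r}\;\stackrel{\sim}{\longrightarrow}\;\prod_{m=0}^{r-1}\Q_{p^r},\qquad a\otimes b\;\longmapsto\;\bigl(a\,\sigma^m(b)\bigr)_{m=0}^{r-1}.
$$
Both sides are $r^2$-dimensional over $\Q_p$, and $\mu$ is surjective by a Vandermonde/linear-independence-of-characters argument applied to the distinct embeddings $\sigma^m:\Q_{p^r}\hookrightarrow\bar\Q_p$, hence an isomorphism. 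I regard the source as a $\Q_{p^r}$-bimodule (first factor on the left, second on the right); then on the target, the first $\Q_{p^r}$ acts on the $m$-th component by the identity, while the second acts via $\sigma^m$.

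Next, I view $V\otimes_{\Q_p}\Q_{p^r}$ as a $\Q_{p^r}$-module via the right factor and expand
$$
V\otimes_{\Q_p}\Q_{p^r}\;=\;V\otimes_{\Q_{p^r}}\bigl(\Q_{p^r}\otimes_{\Q_p}\Q_{p^r}\bigr)\;\xrightarrow{\;\mathrm{id}\otimes\mu\;}\;V\otimes_{\Q_{p^r}}\prod_{m=0}^{r-1}\Q_{p^r}\;=\;\bigoplus_{m=0}^{r-1}V\otimes_{\Q_{p^r},\sigma^m}\Q_{p^r},
$$
where the last equality uses that the $m$-th factor is $\Q_{p^r}$ with the left $\Q_{p^r}$-action twisted by $\sigma^m$, exactly matching the convention $\lambda v\otimes \mu = v\otimes\sigma^m(\lambda)\mu$ recalled just before Lemma \ref{tensor decomposition wrt D star}.

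Finally, $\Gal_E$-equivariance is automatic: the action is on $V$ alone, is trivial on all the $\Q_{p^r}$-scalars, and commutes with $\mathrm{id}\otimes\mu$, which is $\Q_p$-linear. There is no real obstacle to this lemma; the only point requiring care is the bookkeeping that identifies the $m$-th summand with the $\sigma^m$-twist rather than with $V$ itself, and this is settled by the bimodule structure on $\mu$ above.
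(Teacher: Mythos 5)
The paper gives no proof at all here (it labels the lemma \emph{obvious}), and your argument is the natural one and is correct; you are also right that the sum should start at $m=0$, a typo in the paper's statement. One bookkeeping slip is worth correcting: with $\mu(a\otimes b)=(a\sigma^m(b))_m$ you correctly note that the first $\Q_{p^r}$ acts trivially on each factor and the second acts via $\sigma^m$, but you then assert that the $m$-th factor carries the \emph{left} action twisted by $\sigma^m$, contradicting your own setup. With your choice of $\mu$, tensoring $V$ over the first-factor $\Q_{p^r}$-action identifies the $m$-th summand with $V\otimes_{\Q_{p^r},\sigma^{-m}}\Q_{p^r}$ rather than $V\otimes_{\Q_{p^r},\sigma^{m}}\Q_{p^r}$; this is only a relabeling $m\mapsto -m$ and does not affect the lemma, which asks merely for an isomorphism of $\Gal_E$-representations, and each summand is just $V$ as such. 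Since, however, the display immediately following the lemma applies this decomposition to produce the factors $D^{(m)}_{crys,r}(V)$, where the $\Q_{p^r}$-linear structure on each summand does matter, to match the paper's indexing one should take $\mu(a\otimes b)=(\sigma^m(a)\,b)_m$ instead, so that the $\sigma^m$-twist sits on the factor being contracted against $V$.
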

By the lemma there is a natural direct decomposition
$$
V\otimes_{\Q_p}B_{crys}\cong
V\otimes_{\Q_p}(\Q_{p^r}\otimes_{\Q_{p^r}}B_{crys})\cong
(V\otimes_{\Q_p}\Q_{p^r})\otimes_{\Q_{p^r}}B_{crys}\cong\oplus_{m=0}^{r-1}V\otimes_{\Q_{p^r},\sigma^m}B_{crys},
$$
which implies in particular a direct decomposition of $E_0$-vector
spaces
$$
D_{crys}(V)=\oplus_{m=0}^{r-1}D_{crys,r}^{(m)}(V).
$$
It is clear that $V$ is crystalline iff
$\dim_{E_0}D_{crys,r}^{(m)}(V)=\dim_{\Q_{p^r}}V$ for either $m$
holds. Let $V$ be a crystalline $\Q_{p^r}$-representation. Over
$D_{crys}(V)$ there is a natural $\sigma$-linear map $\phi$, and
over $D_{dR}(V)=D_{crys}(V)\otimes_{E_0}E$ there is a natural
filtration $Fil$. We want to study some properties of the
restrictions of them to a direct factor.
\begin{lemma}\label{phi-module and direct sum}
The map $\phi$ permutes the direct factors
$\{D_{crys,r}^{(m)}(V)\}$ cyclically. Consequently, one has the
decomposition of $\phi^r$-modules
$$
(D_{crys}(V),\phi^r)=\oplus_{m=0}^{r-1}(D_{crys,r}^{(m)}(V),\phi^r|_{D_{crys,r}^{(m)}(V)}).
$$
Moreover, each $\phi^r$-submodule
$(D_{crys,r}^{(m)}(V),\phi^r|_{D_{crys,r}^{(m)}(V)})$ has the same
Newton slopes.
\end{lemma}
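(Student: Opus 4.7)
The plan is to exhibit the cyclic action of $\phi$ on the summands $\{D_{crys,r}^{(m)}(V)\}_{0\le m\le r-1}$ by a direct formula, and then deduce both the $\phi^r$-module decomposition and the equality of Newton slopes by elementary linear algebra.

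First, for each $m$ I would introduce the map
$$
\Phi_m\colon V\otimes_{\Q_{p^r},\sigma^m}B_{crys}\longrightarrow V\otimes_{\Q_{p^r},\sigma^{m+1}}B_{crys},\qquad v\otimes b\mapsto v\otimes\phi(b),
$$
and check it respects the tensor relation: for $\lambda\in\Q_{p^r}$, the two sides $\lambda v\otimes b=v\otimes\sigma^m(\lambda)b$ of the source map to $\lambda v\otimes\phi(b)$ and $v\otimes\phi(\sigma^m(\lambda)b)=v\otimes\sigma^{m+1}(\lambda)\phi(b)$ respectively, which coincide in the target by the defining relation of $\otimes_{\sigma^{m+1}}$; here I use that $\phi$ restricts to $\sigma$ on the unramified subfield $\Q_{p^r}\subset B_{crys}$. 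Since $\phi$ commutes with the $\Gal_E$-action on $B_{crys}$, $\Phi_m$ sends $D_{crys,r}^{(m)}(V)$ into $D_{crys,r}^{(m+1)}(V)$, and under the identification $D_{crys}(V)=\bigoplus_m D_{crys,r}^{(m)}(V)$ recalled just before the lemma, the map $\bigoplus_m\Phi_m$ is exactly the global Frobenius $\phi$ on $D_{crys}(V)$. Iterating $r$ times, $\phi^r$ preserves each summand, yielding the claimed $\phi^r$-module decomposition.

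Second, since $V$ is crystalline as a $\Q_p$-representation, $\phi$ on $D_{crys}(V)$ is a $\sigma$-semilinear automorphism; because it cyclically permutes summands of equal $E_0$-dimension $\dim_{\Q_{p^r}}V$ (by the crystalline criterion recalled above), each restriction $\Phi_m\colon D_{crys,r}^{(m)}(V)\to D_{crys,r}^{(m+1)}(V)$ is a $\sigma$-semilinear bijection. For the Newton slopes, I would use the identity $\phi\circ(\phi^r|_{D_{crys,r}^{(m)}(V)})=(\phi^r|_{D_{crys,r}^{(m+1)}(V)})\circ\Phi_m$, which expresses that the $\sigma$-semilinear bijection $\Phi_m$ intertwines the two $\phi^r$-structures. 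After scalar extension to the completion of the maximal unramified extension of $E_0$, this intertwining becomes an isomorphism of isocrystals $(D_{crys,r}^{(m)}(V),\phi^r)\cong(D_{crys,r}^{(m+1)}(V),\phi^r)$, and Newton slopes are invariants of isocrystals under such isomorphism (via Dieudonn\'e--Manin). Hence all the summands share the same Newton polygon.

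The main obstacle I anticipate is purely bookkeeping: keeping straight the $\sigma^m$-twist in each tensor product, the $\sigma$-semilinearity of $\phi$ and its interaction with these twists, and the fact that Newton slopes of $\phi^r$ differ by a factor $r$ from those of $\phi$ on the ambient. Once these conventions are nailed down, all three assertions of the lemma are formal consequences of the admissibility of $(D_{crys}(V),\phi)$ and of the cyclic shift performed by $\Phi_m$.
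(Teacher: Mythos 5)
Your verification that $\phi$ acts cyclically and hence that $\phi^r$ preserves each $D_{crys,r}^{(m)}(V)$ matches the paper's argument exactly; the genuine divergence is in the Newton-slope statement. The paper passes to a finite extension $E'$ of $E$ whose residue degree $f_{E'}$ is coprime to $r$: then $\phi^{f_{E'}}$ is $E'_0$-linear, and writing $af_{E'}=1+br$ gives a \emph{linear} isomorphism $\phi^{af_{E'}}\colon D_{crys,r}^{(m)}(V)\to D_{crys,r}^{(m+1\bmod r)}(V)$ commuting with $\phi^r$, i.e.\ a bona fide isomorphism of $\phi^r$-isocrystals, after one checks that Newton slopes are unchanged by the base change $E\rightsquigarrow E'$. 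You instead work directly with the $\sigma$-semilinear bijection $\Phi_m$ and the tautological intertwining $\Phi_m\circ\phi^r=\phi^r\circ\Phi_m$ (both sides being $\phi^{r+1}$ restricted). That route is sound, but the sentence "after scalar extension \dots this intertwining becomes an isomorphism of isocrystals" is imprecise: $\Phi_m$ remains $\sigma$-semilinear, not linear, after base change. What you actually have is a linear identification of $D_{crys,r}^{(m+1)}(V)$ with the $\sigma$-twist of $D_{crys,r}^{(m)}(V)$ as $\phi^r$-isocrystals, together with the fact that $\sigma$-twisting does not change the Newton polygon (slopes are $p$-adic valuations, invariant under $\sigma$, or equivalently one can track the slope decomposition directly through the semilinear bijection). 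With that clarification your argument is complete and avoids the paper's auxiliary extension $E'$, at the cost of invoking the twist-invariance of Newton polygons rather than exhibiting an honest linear isomorphism.
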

\begin{proof}
For $d=v\otimes_{\sigma^m}b\in D_{crys,r}^{(m)}(V)$, it follows
from the formula $ \phi(d)=v\otimes_{\sigma^{m+1\mod r}}\phi(b)
 $ that
$\phi(d)\in D_{crys,r}^{(m+1 \mod r)}(V)$. So $\phi$ permutes the
direct factors in a cyclic way. Thus $\phi^r$ is preserved under
the direct decomposition. The last statement can be seen as follows: take
a basis $e$ of $D_{crys,r}^{(0)}(V)$. Then as $\phi$ is a semilinear isomorphism,
$e_m=\phi^{m}(e)$ is a basis of $D_{crys,r}^{(m)}(V)$. Let $A$ be the matrix satisfying
$$
\phi(e_{r-1})=Ae_0.
$$
Then under the basis $\{e_0,e_1,\cdots,e_{r-1}\}$ of $D_{crys}(V)$, the representation matrix of $\phi^r$ reads:
$$
\phi^r\left(
        \begin{array}{c}
          e_0 \\
          e_1 \\
          \vdots\\
          e_{r-1} \\
        \end{array}\right)=
       \left(
         \begin{array}{cccc}
           A & 0 & \cdots & 0 \\
           0 & A^\sigma & \cdots & 0 \\
           \vdots & \vdots & \ddots & \vdots \\
           0 & 0 & \cdots & A^{\sigma^{r-1}} \\
         \end{array}
       \right)\left(
        \begin{array}{c}
          e_0 \\
          e_1 \\
          \vdots\\
          e_{r-1} \\
        \end{array}\right).
$$
From here, one sees the equality of Newton slopes on each factor clearly.
\end{proof}
As a consequence, one can define on the tensor product
$\otimes_{m=0}^{r-1}D_{crys,r}^{(m)}(V)$ a $\phi$-module structure:
for a vector of form $v_0\otimes \cdots\otimes v_{r-1}$, define
$$
\phi_{ten}(v_0\otimes \cdots\otimes v_{r-1}):=\phi(v_{r-1})\otimes
\phi(v_0)\otimes\cdots\otimes \phi(v_{r-2}).
$$
It is easily seen that the $\sigma^r$-linear map $\phi_{ten}^{r}$
is the tensor product of $\phi^r|_{D_{crys,r}^{(m)}(V)}$s. Next we
consider the induced filtration $Fil_m^i:=Fil^i\cap
D_{dR,r}^{(m)}(V)$ on each direct factor $D_{dR,r}^{(m)}(V)$ from
$D_{dR}(V)$. As filtered modules it holds that
 $$
(D_{dR}(V),Fil)=\oplus_{m=0}^{r-1}(D_{dR,r}^{(m)}(V),Fil_m).
 $$
The tensor product $\otimes_{m=0}^{r-1}D_{dR,r}^{(m)}(V)$ is
equipped with the filtration $Fil_{ten}$ which is the tensor
product of $Fil_m$s.

\begin{proposition}\label{structure of D_crys}
Let $V_1$ be a crystalline $\Q_{p^r}$-representation and $V$ a
$\Q_p$ representation such that there is an isomorphism of
$\Q_{p^r}$-representations
$$
V\otimes_{\Q_p} \Q_{p^r}\cong
V_1\otimes_{\Q_{p^r}}V_{1,\sigma}\otimes_{\Q_{p^r}}\cdots\otimes_{\Q_{p^r}}V_{1,\sigma^{r-1}}.
$$
Then there is an isomorphism of filtered $\phi$-modules:
$$
D_{crys}(V)\cong\otimes_{m=0}^{r-1}D_{crys,r}^{(m)}(V_1),
$$
where the filtered $\phi$-module structure on $\otimes_{m=0}^{r-1}D_{crys,r}^{(m)}(V_1)$ is given by $Fil_{ten}$ and $\phi_{ten}$.
\end{proposition}
\begin{proof}
The original proof is lengthy. The current argument is suggested
by the referee. It is simpler and clearer. One observes the following
natural isomorphisms of filtered $\phi$-modules:
\begin{eqnarray*}
  D_{crys}(V)&=&  [V\otimes_{\Q_{p}}B_{crys}]^{\Gal_E} \\
   &\cong& [V\otimes_{\Q_p}(\Q_{p^r}
 \otimes_{\Q_{p^r}}B_{crys})]^{\Gal_E} \\
   &\cong &[(V\otimes_{\Q_p}\Q_{p^r})
 \otimes_{\Q_{p^r}}B_{crys}]^{\Gal_E}\\
 &\cong&  [(V_1\otimes_{\Q_{p^r}}V_{1,\sigma}\otimes_{\Q_{p^r}}\cdots\otimes_{\Q_{p^r}}V_{1,\sigma^{r-1}})
 \otimes_{\Q_{p^r}}B_{crys}]^{\Gal_E}\\
 &\cong& [\bigotimes_{m=0}^{r-1}(V_{1,\sigma^m}\otimes_{\Q_{p^r}}B_{crys})]^{\Gal_E}\\
 &\cong&  [\bigotimes_{m=0}^{r-1}(V_{1}\otimes_{\Q_{p^r},\sigma^m}B_{crys})]^{\Gal_E}\\
 &=&\otimes_{m=0}^{r-1}D_{crys,r}^{(m)}(V_1).
\end{eqnarray*}
An extra explanation is possibly necessary: note first that the $\phi$ action on $B_{crys}$ preserves $\Q_{p^r}$ and acts on it by $\sigma$. So it permutes the terms in the first tensor product in the fourth line of the above isomorphisms. It implies that the resulting $\phi$-structure on the tensor product in the last line as given by $\phi_{ten}$. As $V_1$ is crystalline, the subspace $\bigotimes_{m=0}^{r-1}D_{crys,r}^{(m)}(V_1)$ in the $\Gal_E$-invariant space has the same dimension as $D_{crys}(V)$. Therefore, the equality in the last line follows.
\end{proof}
In the previous proposition we consider the case where $V$ is
polarisable and of Hodge-Tate weights $\{0,1\}$. Here $V$ being
polarisable means that there is a perfect $\Gal_E$-pairing
$V\otimes V\to \Q_p(-1)$. This condition implies that if $\lambda$
is a Newton (resp. Hodge) slopes of $V$, then $1-\lambda$ is also
a Newton (resp. Hodge) slopes of $V$ with the \emph{same}
multiplicity.
\begin{proposition}\label{possible newton slopes}
Let $V$ be a polarisable crystalline representation with
Hodge-Tate weights $\{0,1\}$. If there exists a two dimensional
crystalline $\Q_{p^r}$-representation $V_1$ such that
$$
V\otimes_{\Q_p} \Q_{p^r}\cong
V_1\otimes_{\Q_{p^r}}V_{1,\sigma}\otimes_{\Q_{p^r}}\cdots\otimes_{\Q_{p^r}}V_{1,\sigma^{r-1}}
$$
holds, then it holds that
\begin{itemize}
    \item [(i)] the Hodge slopes of $V_1$ is $\{2r-1\times 0,
1\times 1\}$,
    \item [(ii)] the Newton slopes of $V_1$ is either
$\{2r\times \frac{1}{2r}\}$ or $\{r\times 0,r\times
\frac{1}{r}\}$.
\end{itemize}
Consequently, there are only two possible Newton slopes for $V$:
$\{2^r\times \frac{1}{2}\}$ or $\{1\times 0, \cdots, {r\choose
i}\times \frac{i}{r},\cdots, 1\times 1\}$.
\end{proposition}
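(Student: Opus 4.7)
The plan is to combine the tensor decomposition of filtered $\phi$-modules from Proposition \ref{structure of D_crys} with weak admissibility and the Dieudonn\'{e}--Manin classification. First invoke Proposition \ref{structure of D_crys} to get
$$
D_{crys}(V) \cong \bigotimes_{m=0}^{r-1} D_{crys,r}^{(m)}(V_1)
$$
as filtered $\phi$-modules, where each factor has $E_0$-dimension $2$, the filtration on the tensor is the componentwise tensor filtration, and $\phi$ on the right is $\phi_{ten}$. Both the Hodge and Newton polygons of $V$ are then determined from those of the factors via tensor products.

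For (i), let the two Hodge--Tate weights of $D_{dR,r}^{(m)}(V_1)$ be integers $a_m \leq b_m$. The Hodge weights of $D_{dR}(V)$ are the $2^r$ sums $\sum_m c_m$ with $c_m \in \{a_m, b_m\}$, and must all lie in $\{0,1\}$. The maximum minus the minimum of these sums equals $\sum_m (b_m - a_m)$; since the Hodge weights $0$ and $1$ both occur with positive multiplicity and each $b_m - a_m$ is a non-negative integer, the sum $\sum_m (b_m - a_m)$ must equal $1$, so exactly one index, say $m=0$ after reindexing, has $b_0 - a_0 = 1$ while $a_m = b_m$ for $m \geq 1$; moreover $\sum_m a_m = 0$. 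The residual freedom in the individual $a_m$'s corresponds to twisting $V_1$ by a $\Q_{p^r}$-character of trivial norm, an operation that leaves $V \otimes \Q_{p^r}$ unchanged. Choosing the representative $V_1$ in its twist class so that $a_m = 0$ for every $m$, we obtain $(a_0,b_0)=(0,1)$ and $(a_m,b_m)=(0,0)$ for $m \geq 1$; hence the Hodge slopes of $V_1$ form the multiset $\{(2r-1)\times 0,\, 1\times 1\}$.

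For (ii), Lemma \ref{phi-module and direct sum} shows that $\phi^r$ preserves each $D_{crys,r}^{(m)}(V_1)$ and has the same pair of Newton slopes $\nu_1 \leq \nu_2$ on each factor. The cyclic permutation structure of $\phi$ between the factors then forces the Newton slopes of $\phi$ on the full $D_{crys}(V_1)$ to consist of $r$ copies of $\nu_1/r$ together with $r$ copies of $\nu_2/r$. After enlarging $E$ so that $V_1$ becomes crystalline, weak admissibility of $D_{crys}(V_1)$ gives the equality of Newton and Hodge endpoints, so $\nu_1 + \nu_2 = 1$. Finally, by Dieudonn\'{e}--Manin the Newton slopes of a $2$-dimensional isocrystal are rationals with denominator at most $2$, and combined with $\nu_1 + \nu_2 = 1$ this leaves exactly the two possibilities $(\nu_1,\nu_2)\in\{(0,1),(1/2,1/2)\}$. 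Translating back to $\phi$-slopes on $V_1$ yields the multisets $\{r\times 0,\, r\times 1/r\}$ or $\{2r\times 1/(2r)\}$, as claimed.

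For the consequence on $V$, under the tensor decomposition the Newton slopes of $V$ are the $2^r$ sums $\sum_m c_m$ with $c_m \in \{\mu_1,\mu_2\}:=\{\nu_1/r,\nu_2/r\}$, so the slope $k\mu_2 + (r-k)\mu_1$ occurs with multiplicity $\binom{r}{k}$. The case $(\nu_1,\nu_2)=(1/2,1/2)$ yields the isoclinic polygon $\{2^r\times 1/2\}$, and the case $(\nu_1,\nu_2)=(0,1)$ yields $\{\binom{r}{k}\times k/r\}_{k=0}^{r}$, matching the dichotomy in the proposition. The main obstacle is the normalization step in (i): isolating the representative $V_1$ within its norm-trivial twist class so that the Hodge multiset takes exactly the prescribed form; once this is in hand, the Newton dichotomy in (ii) is a short consequence of weak admissibility together with Dieudonn\'{e}--Manin for $2$-dimensional isocrystals.
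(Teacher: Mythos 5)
Your proof is essentially correct and, in places, takes a genuinely different (and cleaner) route than the paper. Let me compare and also flag two small gaps.

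\textbf{Comparison with the paper.} For part (i), the paper immediately asserts (invoking Proposition \ref{structure of D_crys part I}) that exactly one of the factors $D^{(m)}_{crys,r}(V_1)$ has Hodge slopes $\{0,1\}$ and the others have Hodge slopes $\{0,0\}$. This silently assumes each factor already has Hodge weights in $\{0,1\}$. You correctly observe that a priori there is residual freedom: the constraints coming from $V$ only give $\sum_m a_m=0$ and $\sum_m(b_m-a_m)=1$, so the individual $a_m$ are not pinned down, and one must pass to a norm-trivial twist of $V_1$ to put the Hodge weights in the stated form. The paper does not address this; you at least name the obstruction. For part (ii), you argue through the $2$-dimensional $\phi^r$-modules $D^{(m)}_{crys,r}(V_1)$ and the Dieudonn\'{e}--Manin classification, whereas the paper argues directly about the $2r$-dimensional Newton polygon of $D_{crys}(V_1)$ and (implicitly) uses the fact that its breakpoints sit at lattice points to conclude that the Newton slopes are of the form $\{m_1\times 0,\ m_2\times\lambda\}$. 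Your route is more explicit about where the $2$-dimensional structure enters and is arguably the cleaner of the two.

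\textbf{Two gaps to close.} First, in (ii), the deduction ``Dieudonn\'{e}--Manin together with $\nu_1+\nu_2=1$ leaves only $(0,1)$ or $(\tfrac12,\tfrac12)$'' is not quite complete: Dieudonn\'{e}--Manin and $\nu_1+\nu_2=1$ alone also allow $(-1,2)$, $(-2,3)$, etc. You must add the constraint $\nu_1\geq 0$, which comes from the \emph{other} half of weak admissibility --- the Newton polygon of $D_{crys}(V_1)$ lies on or above the Hodge polygon $\{(2r-1)\times 0,\ 1\times 1\}$, forcing the smallest Newton slope $\nu_1/r$ to be $\geq 0$. You only invoked the equality of endpoints; add the inequality. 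Second, in (i), the existence of a crystalline $\Q_{p^r}$-character $\chi$ of $\Gal_E$ with prescribed Hodge--Tate weights $(-a_0,\dots,-a_{r-1})$ and $\mathrm{Nm}(\chi)=1$ is asserted but not argued. This is true: take $\chi=\prod_m \mathrm{LT}_{\sigma^m}^{-a_m}$, a product of Lubin--Tate characters; then $\mathrm{Nm}(\chi)=\mathrm{Nm}(\mathrm{LT})^{-\sum_m a_m}=\mathrm{Nm}(\mathrm{LT})^{0}=1$ precisely because $\sum_m a_m=0$. It is worth spelling this out, since it is exactly the point the paper leaves implicit, and it is the one place where the proposition as stated (for ``any'' $V_1$ in the hypothesis) would otherwise be subtly false.
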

\begin{proof}
Since the Hodge slopes of $V$ are $\{n\times 0 , n\times 1\}$, by
Proposition \ref{structure of D_crys}, there exists a
unique factor $D^{(i)}_{crys,r}(V_1)$ with two distinct Hodge
slopes $\{0,1\}$ and the other factors have all Hodge slopes zero.
Without loss of generality one can assume that
$D^{(0)}_{crys,r}(V_1)$ has Hodge slopes $\{1\times 0,1\times 1\}$
(and any other factor has $\{2\times 0\}$). Summing up the Hodge
slopes of all factors, one obtains the Hodge polygon of
$D_{crys}(V_1)$ as claimed. By the admissibility of the filtered
$\phi$-module structure on $D_{crys}(V_1)$, one finds that its
Newton slopes must be of form $\{m_1\times 0, m_2\times \lambda\}$
where $m_1+m_2=2r$ holds, and $\lambda\in \Q$ satisfies $\lambda
m_2=1$. By Lemma \ref{phi-module and direct sum}, one finds that
$r|m_i,\ i=1,2$. So $\frac{m_1}{r}+\frac{m_2}{r}=2$ and $m_2\neq
0$. There are only two possible cases:

{\bf Case 1}: $m_1=0$. It implies that $m_2=2r$ and
$\lambda=\frac{1}{2r}$.

{\bf Case 2:} $m_1\neq 0$. It implies that $m_1=m_2=r$ and
$\lambda=\frac{1}{r}$.
\end{proof}

Now we can prove the following
\begin{theorem}\label{classification and existence of newton polygon}
Notation as above. Then there are two possible Newton polygons in
$M_0(\bar k)$. Precisely it is either $\{2^{d+\epsilon(D)}\times
\frac{1}{2}\}$ or
$$
\{2^{d-r+\epsilon(D)}\times 0, \cdots, 2^{d-r+\epsilon(D)}\cdot{r
\choose i}\times \frac{i}{r}, \cdots, 2^{d-r+\epsilon(D)}\times
1\}.
$$
\end{theorem}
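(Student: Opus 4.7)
The plan is to deduce the theorem directly from the structural results accumulated in Section 3; essentially all the hard analytic work has already been done. Fix a $\bar k$-rational point $x_0 \in M_0(\bar k)$, which after enlarging $k$ we may assume is $k$-rational, and lift it to an $\sO_{W(k)}$-valued point $x$ of $M$ as in the discussion opening Section 3. The Newton polygon of $A_{x_0}$ is, by definition, the Newton polygon of the $F$-isocrystal attached to its $p$-divisible group, which coincides with that of the filtered $\phi$-module $D_{crys}(H_{\Q_p})$ associated with the (polarisable) crystalline Galois representation $H_{\Q_p} = H^1_{et}(\bar A_{x^0},\Q_p)$.

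Invoking Corollary \ref{tensor decomposition of galois representation}, Proposition \ref{first crystalline tensor decomposition}, and Proposition \ref{second crystalline tensor decomposition}, and passing to a sufficiently large finite extension $E''$ of $E = W(k)[1/p]$ --- which does not affect the Newton polygon --- one has a tensor decomposition of $\Gal_{E''}$-representations
$$
H_{\Q_p} = (V_{\Q_p} \otimes U_{\Q_p})^{\oplus 2^{\epsilon(D)}},\quad V_{\Q_p}\otimes_{\Q_p}\Q_{p^r}\cong V_1 \otimes_{\Q_{p^r}} V_{1,\sigma}\otimes_{\Q_{p^r}}\cdots\otimes_{\Q_{p^r}} V_{1,\sigma^{r-1}},
$$
in which $V_1$ is two-dimensional crystalline, $V_{\Q_p}$ is crystalline, and $U_{\Q_p}$ is even unramified. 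Proposition \ref{possible newton slopes} then applies to $V_{\Q_p}$ and yields precisely two possibilities for its Newton slopes: either $\{2^r\times \frac{1}{2}\}$ or $\{\binom{r}{i}\times \frac{i}{r}\}_{i=0}^{r}$.

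Separately I would pin down the Newton slopes of $U_{\Q_p}$, showing they are all $0$. The cocharacter analysis already used in the proof of Proposition \ref{first crystalline tensor decomposition} shows that the projection of $\mu_{HT}^a$ to the $\GL(U_{\C_p})$-factor is trivial, so every Hodge-Tate weight of $U$ is zero and the Hodge polygon of $D_{crys}(U_{\Q_p})$ is flat at $0$. By Mazur's inequality together with the equality of endpoints (both being the slope of the determinant, which is $0$), every Newton slope of $U$ must vanish; concretely, after an unramifying extension, $D_{crys}(U_{\Q_p})$ is a unit-root $F$-isocrystal of rank $\dim_{\Q_p} U_{\Q_p} = 2^{d-r}$.

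Finally, the slopes of a tensor product of $F$-isocrystals are the pairwise sums of slopes with multiplicities multiplied. Since every slope of $U$ is $0$ with total multiplicity $2^{d-r}$, tensoring with $U$ multiplies each multiplicity in the Newton polygon of $V$ by $2^{d-r}$; the outer direct sum $(-)^{\oplus 2^{\epsilon(D)}}$ multiplies everything once more by $2^{\epsilon(D)}$. This produces $\{2^{d+\epsilon(D)}\times \frac{1}{2}\}$ in the first case and $\{2^{d-r+\epsilon(D)}\binom{r}{i}\times \frac{i}{r}\}_{i=0}^{r}$ in the second case, matching the two polygons in the statement exactly. The main obstacle, in my view, is the step on $U$: one must carefully combine the fact that $U$ is \emph{potentially} unramified (rather than outright unramified) with the invariance of Newton slopes under finite base change to reach the unit-root conclusion, and verify that the cocharacter computation genuinely kills every Hodge-Tate weight on the $U$-factor.
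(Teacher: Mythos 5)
Your proposal follows essentially the same route as the paper's own proof: reduce to the tensor decomposition of Corollary \ref{tensor decomposition of galois representation}, make each factor crystalline via Propositions \ref{first crystalline tensor decomposition} and \ref{second crystalline tensor decomposition}, apply Proposition \ref{possible newton slopes} to the $V$-factor, and observe that the unramified $U$-factor only rescales multiplicities. The paper dispatches the $U$-factor in one sentence, whereas you spell out the unit-root conclusion via the flat Hodge polygon and weak admissibility; this is a correct elaboration rather than a different argument, and the "obstacle" you flag is already handled by the invariance of Newton slopes under finite base change, which the paper uses implicitly throughout Section 3.
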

\begin{proof}
Let $x_0\in M_0(\bar k)$, $A_{x_0}$ and $A_x$ as above. The question
is to determine the possible Newton polygons of the filtered
$\phi$-module $D_{crys}(H^1_{et}(\bar{A}_{x^0},\Q_p))$. For a
suitable large finite extension $E$ of $\Q_p$, we can assume the
direct-tensor decomposition of the $\Gal_E$-module
$H^1_{et}(\bar{A}_{x^0},\Q_p)$ as given in Corollary \ref{tensor
decomposition of galois representation}. Now Propositions \ref{first
crystalline tensor decomposition} and \ref{second crystalline tensor
decomposition} imply that we can even assume that each tensor factor
in the tensor decomposition is crystalline. An unramified factor
contributes only to a multiplicity in the Newton polygon. Hence the
theorem follows directly from Proposition \ref{possible newton
slopes}.
\end{proof}

\begin{remark}\label{remark on the existence of NP}
In this remark we would like to discuss the existence of each Newton
polygon in the classification. To this end, it suffices to realize
that the method of Noot for the original example of Mumford (see
\S3-5 \cite{Noot2}) can be generalized directly: Noot studied the
reductions of CM points of a Mumford's family. The set of CM points
can be divided into two types: let $F\subset J$ be a maximal
subfield of $D$. Then $J$ can either be written in a form
$N\otimes_{\Q}F$ ($N$ is necessary an imaginary quadratic extension
of $\Q$) or not in such a form. To our purpose one finds that the
second case generalizes, and the resulting generalization gives the
necessary existence result. More precisely, Proposition 5.2 loc.
cit. provides the maximal subfields in $D$ of the second type with
the following freedom: Let $\mathfrak{p}$ be a prime of $F$ over
$p$. Then $J$ can be so chosen that $\mathfrak p$ is split or inert
in $J$. Secondly, Lemma 3.5 and Proposition 3.7 loc. cit. work
verbatim for a general $D$ except that in the case $\epsilon(D)=1$
one adds the multiplicity two to the constructions appeared therein.
This step gives us an isogeny class of CM abelian varieties which
appear as $\bar \Q$-points of $M_K$, and also as $\bar \Z_p$-points
of $M$ and hence in $M_0(\bar k)$. Finally the proof of Proposition
4.4 loc. cit., namely the method of computing the Newton polygon for
a CM abelian variety modulo $p$, works in general. Thus one can also
conclude the existence result for the general case.
\end{remark}

\section{A direct-tensor decomposition of the universal filtered
Dieudonn\'{e} module over a formal neighborhood}\label{Section on
formal tensor decomposition} Let $x_0$ be a $k$-rational point of
$M$ and $\hat{M}_{x_0}$ the completion of $M$ at $x_0$. The aim of
this section is to show a direct-tensor decomposition of the
restriction of $(H,F,\nabla,\phi)$ to the formal neighborhood of
$x_0$. Let $E$ be a finite extension of $L$ and $x^0$ an
$E$-rational point of $M$ which specializes into $x_0$. Corollary
\ref{tensor decomposition of galois representation} gives a
decomposition of $H_{\Z_p}$ into direct-tensor product of
$\Gal_E$-lattices. By Propositions \ref{second crystalline tensor
decomposition} and \ref{possible newton slopes}, $V_1\subset
V_1\otimes \Q_p$ is a $\Gal_E$-lattice of a two dimensional
potentially crystalline $\Q_{p^r}$-representation with Hodge-Tate
weights $\{2r-1\times 0, 1\times 1\}$. By making a possible finite
extension of $E$, we can assume that $V_1\otimes \Q_p$ is already
crystalline as $\Gal_E$-module.

\subsection{Drinfel'd's $\sO_{\mathfrak p}$-divisible module and versal
deformation}\label{Drinfel'd module and versal deformation
subsection} Recall the following notion of $\sO_{\mathfrak
p}$-divisible modules due to Drinfel'd (see Appendix \cite{Car}):
\begin{definition}
Let $S=\Spec R$ be an $\sO_{\mathfrak p}$-scheme. An
$\sO_{\mathfrak p}$-divisible module over $S$ is a pair $(G,f)$
consisting of a $p$-divisible group $G$ over $S$ and an action of
$\sO_{\mathfrak p}$ on $G$:
$$
f: \sO_{\mathfrak p}\to \End(G)
$$
satisfying
\begin{itemize}
    \item [(a)] $f(1)$ is the identity,
    \item [(b)] $G^0$ (the connected part of $G$) is of dimension 1,
    \item [(c)] the derivation of $f$, $f': \sO_{\mathfrak p}\to
    \End(Lie(G))=R$ coincides with the structural morphism
    $\sO_{\mathfrak p}\to R$.
\end{itemize}
\end{definition}
By the fundamental theorem of C. Breuil (see Corollary 3.2.4,
Theorem 3.2.5 \cite{Br}), the $\Gal_E$-lattice $V_1$ corresponds to
a $p$-divisible group $B$ over $\sO_E$.
\begin{lemma}\label{introduction of dieudonne module}
The corresponding $p$-divisible group $B$ is an $\sO_{\mathfrak
p}$-divisible module over $\sO_E$ of height 2.
\end{lemma}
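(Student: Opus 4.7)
The plan is to transport the tautological $\Z_{p^r}$-structure on the $\Gal_E$-lattice $V_1$ across Breuil's equivalence to an $\sO_{\mathfrak p}$-action on $B$, and then verify the three conditions (a)--(c) of the definition. Since the discriminant of $F$ is prime to $p$, the prime $\mathfrak p$ is unramified over $p$, so $F_{\mathfrak p}=\Q_{p^r}$ and $\sO_{\mathfrak p}=\Z_{p^r}$. Scalar multiplication by $\Z_{p^r}$ on $V_1$ commutes with the $\Gal_E$-action; by functoriality of Breuil's equivalence (applicable because $V_1\otimes\Q_p$ is already assumed crystalline over $\Gal_E$) this transports to a ring homomorphism $f:\sO_{\mathfrak p}\to\End(B)$ with $f(1)=\id$, yielding condition (a). The height as an $\sO_{\mathfrak p}$-divisible module is the $\Z_{p^r}$-rank of $V_1$, namely $2$. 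Condition (b) follows from the Hodge-Tate weights $\{2r-1\times 0,\,1\times 1\}$ of $V_1$ (Proposition \ref{possible newton slopes}(i)): by Tate's theorem, $\dim B$ equals the multiplicity of the weight $1$, so $\dim B=1$ and $B^0$ is one-dimensional.

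The nontrivial point is condition (c): one must show that the derived map $f':\sO_{\mathfrak p}\to\End(\mathrm{Lie}(B))=\sO_E$ is the structural embedding $\sO_{\mathfrak p}\subset\sO_L\subset\sO_E$, rather than some conjugate $\sigma^m\circ(\text{structural embedding})$. I would pass to the de Rham side: the $\sO_{\mathfrak p}$-action induces the eigen-decomposition
$$
D_{dR}(V_1)\cong\bigoplus_{m=0}^{r-1}D_{dR,r}^{(m)}(V_1)
$$
already used in Section 3, on which $\sO_{\mathfrak p}$ acts through $\sigma^m$ composed with the structural embedding. The Hodge filtration $Fil^1$ is one-dimensional and therefore concentrated in a single index $m_0$. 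Identifying (the $\sO_E$-linear dual of) $\mathrm{Lie}(B)$ with this unique Hodge-one piece sitting in $D_{dR,r}^{(m_0)}(V_1)$ then shows that $f'$ factors as $\sigma^{m_0}$ composed with the structural embedding, so it suffices to check $m_0=0$.

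That $m_0=0$ is precisely the content of the Hodge-Tate cocharacter calculation carried out in the proof of Proposition \ref{first crystalline tensor decomposition}: in the definition of the Shimura datum, $h_0=\id\times u_0$ is nontrivial only on the $\SL_2$-factor indexed by $\tau$, and under our fixed embedding $\iota:\bar\Q\hookrightarrow\bar\Q_p$ this $\tau$ matches the identity element of $\Hom_{\Q_p}(F_{\mathfrak p},\bar\Q_p)$, i.e.\ $\sigma^0$. Via the Blasius-Wintenberger transfer of $C_{HdR}$ to $C_{HT}$ already invoked there, the unique Hodge weight $1$ in $D_{dR}(V_1)$ lives in $D_{dR,r}^{(0)}(V_1)$, forcing $m_0=0$. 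The main obstacle in the whole argument is precisely this bookkeeping step: one has to track the chain of identifications $\Psi\leftrightarrow\Hom_{\Q_p}(F_{\mathfrak p},\bar\Q_p)\leftrightarrow\{\sigma^0,\dots,\sigma^{r-1}\}$ carefully enough to conclude that it is genuinely the structural embedding, and not a Galois conjugate, that appears on $\mathrm{Lie}(B)$; once this is done, everything else is formal from the functoriality of Breuil's equivalence and the admissibility already established.
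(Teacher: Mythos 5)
Your proof follows the same route as the paper: transport the $\Z_{p^r}$-scalar action across Breuil's anti-equivalence, read off (a) immediately, derive the height and (b) from the Hodge--Tate weights $\{2r-1\times 0,\ 1\times 1\}$, and finally check (c). Where the paper's proof disposes of (c) with the one-line remark that the induced map $\sO_{\mathfrak p}\to\sO_E$ ``ought to be the structural morphism by the naturalness of the functor,'' you correctly identify this as the one nontrivial step and supply the missing justification: the $\sO_{\mathfrak p}$-action on $\operatorname{Lie}(B)$ is governed by which eigen-piece $D_{dR,r}^{(m_0)}(V_1)$ carries the nonzero $Fil^1$, and the Hodge--Tate cocharacter bookkeeping (via Blasius--Wintenberger and the normalization that $V_1$ corresponds to the embedding $\tau$, built into Proposition~\ref{second crystalline tensor decomposition}) forces $m_0=0$. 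This is a genuine, if modest, sharpening of the paper's argument: it makes explicit that condition (c) is not automatic from functoriality alone but depends on the chosen indexing of the tensor factors matching the structural embedding; the paper relies on the reader having already internalized that normalization from \S3.
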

\begin{proof}
By construction, one has the inclusion $\Z_{p^r}\subset
End_{\Gal_E}(V_1)$. Since the functor of Breuil is an
anti-equivalence of categories, one obtains an inclusion as well
 $
\sO_{\mathfrak p}\cong\Z_{p^r}\subset End(B)
 $.
The condition (a) is obvious. The condition (b) on the dimension
of $G$ and the assertion on the height of $B$ follow from the
Hodge-Tate weights of $V_1\otimes \Q_p$ given in Proposition
\ref{possible newton slopes}. By taking the derivation of the
inclusion, one obtains an inclusion of $\Z_p$-algebras
$\sO_{\mathfrak p}\subset \sO_E$, which ought to be the structural
morphism by the naturalness of the functor.
\end{proof}

Let $M_B$ be the filtered Dieudonn\'{e} module associated with
$B$. Denote by $L_B$ the previous $\Gal_E$-lattice $V_1$. Fix a
generator $s$ of $\Z_{p^r}$ as $\Z_p$-algebra. The image of $s$ in
$\End_{\Gal_E}(L_B)\subset \End_{\Z_p}(L_B)\subset L_B^{\otimes}$
is an \'{e}tale Tate cycle of $L_B$ and is denoted by $s_{B,et}$.
By the $p$-adic comparison theorem, one has a natural isomorphism
respecting $\Gal_E$-action, filtrations and $\phi$s:
$$
L_B\otimes_{\Z_p}B_{crys}\cong M_B\otimes_{W(k)}B_{crys}.
$$
By one of the main technical results of \cite{Ki} (see Proposition
0.2 loc. cit.), the crystalline Tate cycle $s_B$, which
corresponds to $s_{B,et}\otimes 1\in L_B\otimes_{\Z_p}B_{crys}$ in
the above comparison, lies actually in $M_B^{\otimes}$. Let
$G_{B}\subset \GL_{\Z_p}(L_B)$ (resp. $\sG_B\subset
\GL_{W(k)}(M_B)$) be the subgroup defined by $s_{B,et}$ (resp.
$s_B$). By Corollary 1.4.3 (4) loc. cit., the filtration
$Fil^1\otimes k$ on $M_B\otimes k$ is
$\sG_B\otimes_{W(k)}k$-split. Choose a cocharacter $\mu_0: \G_m\to
\sG_B\otimes k$ inducing this filtration and further choose a
cocharacter $\mu: \G_m\to \sG_B\subset \GL(M_B)$ lifting $\mu_0$
(see 1.5.4 loc. cit.). The cocharacter $\mu$ defines the opposite
unipotent subgroups:
$$
\xymatrix{
  U_{\sG_B}\ar[d]_{\cap} \ar[r]^{\subset} & \sG_B \ar[d]^{\cap} \\
  U_B \ar[r]^{\subset\quad} & \GL(M_B)   }
$$
By construction, one has $U_{\sG_B}=U_B\cap \sG_B$. Let
$\hat{U}_{\sG_B}$ (resp. $\hat{U}_{B}$) be the completion of
$U_{\sG_B}$ at the identity section of $U_{\sG_B}$ (resp. $U_B$),
whose corresponding complete local rings are denoted by
$R_{\sG_B}$ and $R$ respectively. The filtration on $M_B$ defined
by $\mu$ corresponds to a $p$-divisible group $B'$ over $W(k)$
whose closed fiber $B'\otimes k$ is isomorphic to $B\otimes k$ as
$p$-divisible group over $k$. For a later use we denote this $\mu$
also by $\mu_{B'}$. Write $(M_B', Fil^1_{M_B'},\phi_{M_B'})$ for
the tuple defined by the filtered crystal structure on $M_B'$, and
fix $\phi_{\hat{U}_{\sG_B}}: R_{\sG_B}\to R_{\sG_B}$ a lifting of
the absolute Frobenius. Following Faltings Remarks \S7 \cite{Fa3}
(see also \S1.5 \cite{Ki}), one defines a filtered $F$-crystal
over $\hat{U}_{\sG_B}$ by the tuple
$$
(\sN_B=M_B'\otimes_{W(k)}R_{\sG_B},
Fil^1_{\sN_B}=Fil^1_{M_B'}\otimes_{W(k)}R_{\sG_B},
\phi_{\sN_B}=u\circ(\phi_{M_B'}\otimes \phi_{\hat{U}_{\sG_B}})),
$$
where $u\in U_{\sG_B}(R_{\sG_B})$ is the tautological
$R_{\sG_B}$-point of $U_{\sG_B}$. Then by Faltings loc. cit., there
is a unique integrable connection $\nabla_{\sN_B}$ over $\sN_B$ such
that the quadruple
$(\sN_B,Fil^1_{\sN_B},\phi_{\sN_B},\nabla_{\sN_B})$ defines an
object in $\mathcal{MF}_{[0,1]}^{\nabla}(\hat U_{\sG_B})$. By
Faltings Theorem 7.1 \cite{Fa2}, there is a $p$-divisible group
$\sB$ over $R_{\sG_B}$, unique up to isomorphism, such that the
attached filtered Dieudonn\'{e} module to $\sB$ is isomorphic to the
above quadruple. If we replace everything of $U_{\sG_B}$ with that
of $U_B$, the above discussion gives then a versal deformation of
$B\otimes k$ over $\hat U_B$, which by abuse of notation is denoted
again by $\sB$. In this context, the sublocus
$\Spf(R_{\sG_B})\hookrightarrow \Spf(R)$ has an interpretation as
the versal deformation respecting the Tate cycles which are
stabilized by $\sG_B$ (see \S7 \cite{Fa3} and Corollary 1.5.5
\cite{Ki}). By Corollary 1.5.11 \cite{Ki}, $B$ is isomorphic to the
pull-back of $\sB$ along a $W(k)$-algebra morphism $R_{\sG_B}\to
\sO_E$.\\

We proceed to study the natural $\sG_B$-action on $M_B$ via the
inclusion $\sG_B\subset \GL_{W(k)}(M_B)$. Recall that we have fixed
an element $s\in \Z_{p^r}$. Let $\{s_i:=s^{\sigma^i}\}_{0\leq i\leq
r-1}\subset\Z_{p^r}$ be the Galois conjugates of $s$. The minimal
polynomial of $s_B\in \End_{W(k)}(M_B)$ is that of $s\in \Q_{p^r}$
over $\Q_p$. As $\Z_{p^r}\subset W(k)$, the minimal polynomial of
$s_B$ splits into linear factors and one has then the relation in
$\End_{W(k)}(M_B)$:
$$
(s_B-s_0)\cdots(s_B-s_{r-1})=0.
$$
Let $M_i\subset M_B$ be the eigenspace of $s_B$ corresponding to
the eigenvalue $s_i$. Recall that we have shown in \S\ref{Section
on two dimesional crystalline rep} a direct sum decomposition of
$M_B\otimes {\rm Frac}(W(k))=D_{crys}(L_B\otimes \Q_p)$ by using
the functors $D_{crys,r}^{(i)}$ of Fontaine. Now we have the
following
\begin{lemma}\label{lattic structure of fontaine functors}
The eigen decomposition $M_B=\oplus_{i=0}^{r-1}M_i$ is a lattice
decomposition of
$$
M_B\otimes {\rm
Frac}(W(k))=\oplus_{i=0}^{r-1}D_{crys,r}^{(i)}(L_B\otimes \Q_p).
$$
Namely, $M_i$ is a lattice in $D_{crys,r}^{(i)}(L_B\otimes \Q_p)$
for each $i$.
\end{lemma}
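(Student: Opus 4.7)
The plan is to recognize the eigenspace decomposition $M_B=\oplus_i M_i$ over $W(k)$ and the Fontaine direct sum $M_B\otimes {\rm Frac}(W(k))=\oplus_i D_{crys,r}^{(i)}(L_B\otimes\Q_p)$ as two compatible decompositions of the \emph{same} underlying vector space, both coming from the single operator $s_B$. Granting this, it suffices to match eigenvalues componentwise, after which the lattice assertion becomes automatic.

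First I would verify that the eigen decomposition $M_B=\oplus_i M_i$ is honest over $W(k)$, not merely over its fraction field. Since $s$ generates $\Q_{p^r}$ over $\Q_p$, its Galois conjugates $\{s_i=s^{\sigma^i}\}_{0\leq i\leq r-1}$ are pairwise distinct elements of $\Z_{p^r}\subset W(k)$, so each difference $s_i-s_j$ is a unit in $W(k)$. A standard Bezout/partial-fraction argument applied to the relation $(s_B-s_0)\cdots(s_B-s_{r-1})=0$ then splits $M_B$ into the direct sum of the kernels $M_i=\ker(s_B-s_i)$, each of which is a direct $W(k)$-summand.

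Next I would compute the action of $s_B$ on each Fontaine component. Using the definition
$$
D_{crys,r}^{(m)}(L_B\otimes\Q_p)=\bigl((L_B\otimes\Q_p)\otimes_{\Q_{p^r},\sigma^m}B_{crys}\bigr)^{\Gal_E}
$$
together with the relation $\lambda v\otimes_{\sigma^m}b=v\otimes_{\sigma^m}\sigma^m(\lambda)b$ recalled in \S3, the operator $s_{B,et}\otimes 1$ acts on $(L_B\otimes\Q_p)\otimes_{\Q_{p^r},\sigma^m}B_{crys}$ by the scalar $s_m=s^{\sigma^m}$. Taking $\Gal_E$-invariants and then invoking the crystalline comparison $L_B\otimes B_{crys}\cong M_B\otimes B_{crys}$, which by the construction of the Tate cycle (Proposition 0.2 of \cite{Ki}) matches $s_{B,et}$ with $s_B$, we conclude that $s_B$ acts as multiplication by $s_m$ on the direct factor $D_{crys,r}^{(m)}(L_B\otimes\Q_p)$ of $M_B\otimes {\rm Frac}(W(k))$.

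Combining the two steps, the distinctness of the $s_i$ forces the inclusion $M_i\subset D_{crys,r}^{(i)}(L_B\otimes\Q_p)$, and comparing the total ranks on either side (both equal to $2^r$ over $W(k)$, resp.\ $\Z_{p^r}$) gives $M_i\otimes{\rm Frac}(W(k))=D_{crys,r}^{(i)}(L_B\otimes\Q_p)$, so $M_i$ is a $W(k)$-lattice in the corresponding Fontaine component. The main technical obstacle I anticipate is the bookkeeping in the eigenvalue computation: one must track carefully how the $\sigma^m$-twisted tensor product converts the left $\Z_{p^r}$-action into multiplication by $s_m$ on the $B_{crys}$ side, and one must rely on the precise form of Kisin's crystalline comparison to ensure that the \'etale and crystalline avatars of $s$ coincide on the nose (rather than up to a Frobenius twist). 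Once this is settled, all that remains is elementary linear algebra over the DVR $W(k)$.
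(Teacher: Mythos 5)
Your proof is correct and takes essentially the same route as the paper: both rest on the crystalline comparison isomorphism matching $s_{B,et}$ with $s_B$ and on identifying the $\sigma^i$-twisted Fontaine component $D_{crys,r}^{(i)}(L_B\otimes\Q_p)$ as the $s_i$-eigenspace, so that the eigen decomposition of $M_B$ and the Fontaine direct-sum decomposition are forced to agree factor by factor. (The total rank should read $2r$ rather than $2^r$, but this slip is cosmetic and does not affect the argument.)
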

\begin{proof}
In the comparison isomorphism $L_B\otimes_{\Z_p}B_{crys}\cong
M_B\otimes_{W(k)}B_{crys}$, the endomorphism $s_{B,et}\otimes 1$
corresponds to $s_B\otimes 1$. Also the endomorphisms commute with
the $\Gal_E$-actions on both sides. The endomorphism
$s_{B,et}\otimes 1$ on $L_B\otimes_{\Z_{p}}\Z_{p^r}$ decomposes
into eigenspaces and so we obtain a $\Z_{p^r}[\Gal_E]$-module
decomposition:
$$
L_B\otimes_{\Z_{p}}B_{crys}=(L_B\otimes_{\Z_{p}}\Z_{p^r})\otimes_{\Z_{p^r}}B_{crys}=\oplus_{i=0}^{r-1}L_i\otimes_{\Z_{p^r}}B_{crys},
$$
where $L_i$ is the $\Z_{p^r}$-submodule of
$L_B\otimes_{\Z_{p}}\Z_{p^r}$ corresponding to the eigenvalue
$s_i$. Under the comparison isomorphism it corresponds to the
decomposition of $W(k)[\Gal_E]$-module:
$$
M_B\otimes_{W(k)}B_{crys}=\oplus_{i=0}^{r-1}M_i\otimes_{W(k)}B_{crys}.
$$
Taking the $\Gal_E$-invariants, we obtain
$$
(L_i\otimes_{\Z_{p^r}}B_{crys})^{\Gal_E}=M_i\otimes_{W(k)}{\rm
Frac}(W(k)).
$$
Finally one notices that the two indexed sets of $\Gal_E$-modules
$\{L_i\otimes_{\Z_{p^r}}B_{crys}\}_{0\leq i\leq r-1}$ and
$\{L_B[\frac{1}{p}]\otimes_{\Q_{p^r,\sigma^i}}B_{crys}\}_{0\leq
i\leq r-1}$ are actually equal. The lemma follows.
\end{proof}

\begin{proposition}\label{tensor factor has a filtered phi module structure}
The tensor product $\otimes_{i=0}^{r-1}M_i$ is a lattice of the
admissible filtered $\phi$-module
$\otimes_{i=0}^{r-1}D_{crys,r}^{(i)}(L_B\otimes \Q_p)$ in
Proposition \ref{structure of D_crys}.
\end{proposition}
\begin{proof}
The filtration $Fil^1_{B}$ on $M_B\otimes_{W(k)}\sO_E$ is filtered
free and restricts to the filtration $Fil^1_i$ on each direct
factor $M_i\otimes \sO_E$. Also one sees from the proof of
Proposition \ref{possible newton slopes} that there is a unique
factor $M_i$ with nontrivial $Fil^1_i$. Since $\phi_{M_B'}$ is
$\sigma$-linear, it permutes the eigen factors $\{M_i\}_{0\leq
i\leq r-1}$ cyclically. The proposition is now clear.
\end{proof}
For a later use, we denote the above lattice by
$(\otimes_{i=0}^{r-1}M_i,Fil^1_{ten},\phi_{ten})$.
\begin{lemma}\label{the action of group on eigenfactors}
The eigen decomposition of $M_B$ is also a decomposition as
$\sG_B$-module. In fact, the $W(k)$-group $\sG_B$ is naturally
isomorphic to $\prod_{i=0}^{r-1}\GL_2(W(k))$, and the
$\sG_B$-module $M_B$ is isomorphic to the
$\prod_{i=0}^{r-1}\GL_2(W(k))$-module
$\oplus_{i=0}^{r-1}(W(k)^{\oplus 2})_i$, in which the $i$-th
factor $(W(k)^{\oplus 2})_i$ is the tensor product of the standard
representation of the $i$-th factor $\GL_2(W(k))$ and the trivial
representations of the $j$-th factors with $j\neq i$.
\end{lemma}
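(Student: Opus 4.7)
The plan is to identify $\sG_B$ with the centralizer of $s_B$ in $\GL_{W(k)}(M_B)$ and then to exploit the fact that $s_B$ has $r$ distinct eigenvalues lying in $W(k)$. By construction the Tate cycle $s_B$ is simply the endomorphism of $M_B$ induced by the chosen algebra generator $s \in \Z_{p^r}$, viewed as an element of $M_B \otimes M_B^*$. Since the natural action of $\GL_{W(k)}(M_B)$ on $\End_{W(k)}(M_B)$ is by conjugation, the subgroup $\sG_B$ stabilising $s_B$ is exactly the centralizer of $s_B$.

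Next I would study the eigenspace decomposition of $s_B$ over $W(k)$. As $s$ generates the unramified extension $\Z_{p^r}$ over $\Z_p$, its minimal polynomial $f \in \Z_p[x]$ has degree $r$ and unit discriminant, so its roots $s_0,\ldots,s_{r-1} \in \Z_{p^r} \subset W(k)$ are pairwise distinct with each difference $s_i - s_j$ a unit in $W(k)$. Combined with the relation $f(s_B) = 0$, the Chinese remainder theorem applied to $W(k)[x]/f(x) \cong \prod_{i=0}^{r-1} W(k)$ upgrades the eigendecomposition over $\mathrm{Frac}(W(k))$ to an honest $W(k)$-module decomposition $M_B = \oplus_{i=0}^{r-1} M_i$, with $M_i := \ker(s_B - s_i)$ a direct summand, hence free over $W(k)$. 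By Lemma~\ref{lattic structure of fontaine functors}, each $M_i$ is a lattice in $D_{crys,r}^{(i)}(L_B \otimes \Q_p)$, which has rank $\dim_{\Q_{p^r}} V_1 = 2$, so $M_i$ has rank $2$.

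Finally I would extract the structure of $\sG_B$ from the centralizer description. Any element of $\sG_B$ commutes with $s_B$ and therefore preserves each eigenspace $M_i$; conversely any block-diagonal automorphism with respect to the decomposition $M_B = \oplus_i M_i$ commutes with $s_B$. This realises $\sG_B$ as $\prod_{i=0}^{r-1} \GL_{W(k)}(M_i) \cong \prod_{i=0}^{r-1} \GL_2(W(k))$, and the resulting action on $M_B = \oplus_i M_i$ has the factor $\GL_2(W(k))$ in position $i$ acting by the standard representation on $M_i$ and trivially on the other summands, exactly as asserted. The only step requiring genuine care is ensuring that the eigenspace decomposition lives over $W(k)$ rather than only over $\mathrm{Frac}(W(k))$, and this is forced cleanly by the unramifiedness of $\Z_{p^r}/\Z_p$; no serious obstacle remains.
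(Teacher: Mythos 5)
Your proof is correct, but it takes a somewhat different route than the paper's. The paper makes the same initial observation (that $\sG_B$ commutes with $s_B$ and therefore preserves the eigendecomposition), but then switches to the \'{e}tale side: it computes $G_B$ as the commutant of $\Z_{p^r}$ in $\End_{\Z_p}(L_B)$, getting $G_B\cong\GL_2(\Z_{p^r})$, and transfers this back to $\sG_B$ by invoking Kisin's Corollary 1.4.3(3), which provides a $W(k)$-isomorphism $L_B\otimes_{\Z_p}W(k)\cong M_B$ matching $s_{B,et}$ with $s_B$, so that $\sG_B\cong G_B\times_{\Z_p}W(k)$. You instead work entirely on the crystalline side: you identify $\sG_B$ with the centralizer of $s_B$ in $\GL_{W(k)}(M_B)$ (since the defining tensor is a single endomorphism, stabilizer equals centralizer) and then compute that centralizer using the integral eigendecomposition $M_B=\oplus_i M_i$, which holds over $W(k)$ (and not merely over ${\rm Frac}(W(k))$) because $s$ generates the unramified extension $\Z_{p^r}$, hence has unit discriminant; the CRT then yields free rank-two summands $M_i$ and $\sG_B\cong\prod_i\GL(M_i)\cong\prod_i\GL_2(W(k))$. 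Your argument is a bit more self-contained for this particular lemma, trading the heavy input (Kisin's 1.4.3(3)) for an elementary unramifiedness/CRT argument; the paper's route has the side benefit of simultaneously producing the comparison isomorphism between $L_B\otimes W(k)$ and $M_B$ with matched Tate cycles, which is consistent with the framework used throughout the section. Both are valid proofs of the stated lemma.
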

\begin{proof}
Because the $\sG_B$-action on $M_B$ commutes with the $s_B$-action
by definition, the eigen decomposition of $M_B$ with respect to
$s_B$ is preserved by the $\sG_B$-action. This can be seen more
clearly if we go to the \'{e}tale side: first of all, it is easy
to see that the commutant subalgebra of $\Z_{p^r}\subset
\End_{\Z_p}(L_B)\cong M_{2r}(\Z_p)$ is $\End_{\Z_{p^r}}(L_B)\cong
M_{2}(\Z_{p^r})$. So the group $G_B\subset
\GL_{\Z_p}(L_B)\cong\GL_{2r}(\Z_p)$ is isomorphic to
$\GL_2(\Z_{p^r})$, and particularly it is connected. Next, by
Corollary 1.4.3 (3) \cite{Ki}, there is a $W(k)$-linear
isomorphism $L_B\otimes_{\Z_p}W(k)\cong M_B$ which induces an
isomorphism
 $
G_{B}\times_{\Z_p}W(k)\cong\sG_B
 $.
As $\Z_{p^r}\subset W(k)$, one has isomorphisms
$$
\sG_B\cong\GL_2(\Z_{p^r}\otimes_{\Z_p}W(k))\cong\GL_2(\prod_{i=0}^{r-1}W(k))=\prod_{i=0}^{r-1}\GL_2(W(k)).
$$
Under the above isomorphism, the $\sG_B$-module $M_B$ is isomorphic
to the $G_{B}$-module $L_B$ tensorizing with $W(k)$. Moreover the
isomorphism preserves the eigen decompositions of both sides. As
said above, the $G_B$-action on $L_B$ is isomorphic to the standard
representation of $\GL_2(\Z_{p^r})$ on $\Z_{p^r}^{\oplus 2}$, which
is considered as a $\Z_{p}$-group acting on a $\Z_{p}$-module by
restriction of scalar. Thus, the action after tensorizing with
$\Z_{p^r}$, that is the standard
$\GL_2(\Z_{p^r}\otimes_{\Z_p}\Z_{p^r})$-action on
$(\Z_{p^r}\otimes_{\Z_p}\Z_{p^r})^{\oplus 2}$, splits: write
$$
\GL_2(\Z_{p^r}\otimes_{\Z_p}\Z_{p^r})\cong\prod_{i=0}^{r-1}\GL_2(\Z_{p^r}),
\ g\mapsto (g_0,\cdots,g_{r-1}),
$$
and
$$
(\Z_{p^r}\otimes_{\Z_p}\Z_{p^r})^{\oplus
2}\cong\prod_{i=0}^{r-1}\Z_{p^r}^{\oplus 2}, \ v\mapsto
(v_0,\cdots,v_{r-1}).
$$
Then $g(v)$ is mapped to $(g_0v_0,\cdots,g_{r-1}v_{r-1})$. So does
the action after tensorizing with the larger ring $W(k)$. Hence the
lemma follows.
\end{proof}
\begin{proposition}
The sublocus $\hat{U}_{\sG_B}$ is a versal deformation of the
$p$-divisible group $B\otimes k$ as $\sO_{\mathfrak p}$-divisible
module.
\end{proposition}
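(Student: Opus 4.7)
The plan is to show that the universal $p$-divisible group $\sB$ over $\hat U_{\sG_B}$ carries a natural $\sO_{\mathfrak p}$-action satisfying Drinfel'd's three conditions, and then match dimensions to deduce versality.

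First I would propagate the $\sO_{\mathfrak p}$-structure from $B$ to $\sB$. By construction $\sG_B$ is the stabilizer of the Tate cycle $s_B\in M_B^{\otimes}$, so $s_B$, viewed inside $\sN_B^\otimes = (M_B'\otimes_{W(k)} R_{\sG_B})^\otimes$, commutes with the tautological element $u\in U_{\sG_B}(R_{\sG_B})$. Hence $s_B$ is horizontal for $\nabla_{\sN_B}$, respects $Fil^1_{\sN_B}$ (because $\mu$ factors through $\sG_B$ by construction), and commutes with $\phi_{\sN_B} = u\circ(\phi_{M_B'}\otimes \phi_{\hat U_{\sG_B}})$. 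Thus $s_B$ defines a morphism in $MF^{\nabla}_{[0,1]}(\hat U_{\sG_B})$, and Faltings' equivalence (Theorem 7.1 of \cite{Fa2}) produces the corresponding endomorphism of $\sB$. Sending $s$ to this endomorphism extends the $\sO_{\mathfrak p}$-action on $B\otimes k$ to one on $\sB$, so we get a map $f:\sO_{\mathfrak p}\to \End(\sB)$ with $f(1)=\id$.

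Next I would check Drinfel'd's conditions (b) and (c). For (b): the relative dimension $\dim \sB^0 = \mathrm{rk}\, Fil^1_{\sN_B} = \mathrm{rk}\, Fil^1_{M_B'} = 1$, which was already verified in Lemma \ref{introduction of dieudonne module} from the Hodge-Tate weights $\{2r-1\times 0, 1\times 1\}$ of $V_1$. For (c): by Lemma \ref{lattic structure of fontaine functors} and Corollary \ref{tensor factor has a filtered phi module structure} there is a unique eigen component $M_i\subset M_B$ carrying nontrivial $Fil^1$; without loss of generality this is $M_0$, on which $s_B$ acts by $s_0 = s$. The induced action on $\mathrm{Lie}(\sB) = \sN_B/Fil^1_{\sN_B}$ therefore factors through the eigenvalue $s_0$, i.e.\ through the embedding $\sO_{\mathfrak p} \hookrightarrow W(k) \subset R_{\sG_B}$ sending $s\mapsto s_0$. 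This is precisely the structural morphism fixed by the choice of $\iota$ and of the distinguished place $\tau\in O_1$. The derivation condition (c) is thus inherited from the corresponding fact for $B$ over $\sO_E$ (Lemma \ref{introduction of dieudonne module}), using flatness of $R_{\sG_B}$ over $W(k)$.

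Finally, for versality I would compare tangent spaces. By Lemma \ref{the action of group on eigenfactors}, $\sG_B\cong \prod_{i=0}^{r-1}\GL_{2,W(k)}$, and the cocharacter $\mu$ acts nontrivially only on the factor indexed by $0$ (by the same eigen-component argument). Hence the opposite unipotent $U_{\sG_B}$ is the $\mathbb{G}_a$ sitting inside that single $\GL_2$-factor, so $\hat U_{\sG_B}\cong\Spf W(k)[[t]]$ is formally smooth of relative dimension $1$. On the other hand, Drinfel'd's classical theorem gives that the versal deformation space of a height-$2$, dimension-$1$ $\sO_{\mathfrak p}$-divisible module over $k$ is Lubin--Tate and again formally smooth of relative dimension $1$ over $W(k)$. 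Steps 1-2 supply a morphism from $\hat U_{\sG_B}$ to this Drinfel'd deformation space; since the Kodaira--Spencer map is nonzero (the universal $\sB$ is a nontrivial deformation because $u$ is the tautological point), and both source and target are formally smooth of the same dimension over $W(k)$, this morphism is an isomorphism.

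The main obstacle I expect is the derivation condition (c): one must pin down precisely \emph{which} eigenvalue $s_i$ the structural embedding $\sO_{\mathfrak p}\hookrightarrow R_{\sG_B}$ corresponds to, and show that the same $M_i$ is the unique one carrying a nontrivial $Fil^1$. This is exactly the compatibility between the Galois-theoretic decomposition (Lemma \ref{lattic structure of fontaine functors}), the Hodge filtration coming from $\mu_{HdR}$, and the distinguished place $\tau$ used to define the Shimura datum; tracing through these identifications carefully is the delicate bookkeeping step of the argument.
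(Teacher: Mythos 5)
Your argument is correct in outline but runs in the opposite direction from the paper's proof, and the step you use to close it is one the paper's argument deliberately avoids. The paper starts from the abstract versal deformation $Z$ of $B\otimes k$ as an $\sO_{\mathfrak p}$-divisible module. Because the $\sO_{\mathfrak p}$-action on the universal object $\tilde B$ over $Z$ gives, via Faltings' equivalence, an endomorphism of the attached filtered Dieudonn\'e crystal that is the parallel continuation of $s_B$, the Tate cycle $s_B$ extends over $Z$; the universal property of the Tate-cycle-preserving locus (Moonen, Prop.\ 4.9 \cite{Mo}) then forces the closed embedding $Z\subset \hat U_B$ to factor as $Z\subset \hat U_{\sG_B}\subset \hat U_B$. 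One is now comparing two \emph{closed} formal subschemes of $\hat U_B$, each formally smooth of relative dimension one over $W(k)$, so the inclusion $Z\subset \hat U_{\sG_B}$ is automatically an equality; no check of injectivity on tangent spaces is needed.

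You go the other way: you endow the universal $\sB$ over $\hat U_{\sG_B}$ with an $\sO_{\mathfrak p}$-action (using that $s_B$ commutes with $u$, with $\phi_{M_B'}$ and with $\nabla_{\sN_B}$, and respects $Fil^1_{\sN_B}$, hence produces an endomorphism of $\sB$ through Faltings' functor), verify Drinfel'd's three conditions, and thereby obtain a classifying map $\hat U_{\sG_B}\to Z$. This is a genuinely different route, and the explicit verification of conditions (b) and (c) is a useful supplement to what the paper leaves implicit. The weak point is the last step: formal smoothness and equality of relative dimensions alone do \emph{not} imply that a morphism is an isomorphism ($t\mapsto t^2$ on $W(k)[[t]]$ is a counterexample). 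You correctly identify that one must also know the Kodaira--Spencer map of $\sB/\hat U_{\sG_B}$ is nonzero, but the parenthetical ``because $u$ is the tautological point'' is not an argument. To make this rigorous one should, for instance, note that $\hat U_{\sG_B}\hookrightarrow \hat U_B$ is a closed immersion and that the Kodaira--Spencer map of the full universal deformation over $\hat U_B$ is an isomorphism, so its restriction to the one-dimensional tangent space of $\hat U_{\sG_B}$ remains injective. With that gap filled your proof is complete; the paper's arrangement simply sidesteps the issue by placing both $Z$ and $\hat U_{\sG_B}$ inside $\hat U_B$ from the start.
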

\begin{proof}
We calculate first the dimension of $\hat{U}_{\sG_B}$. It is equal
to $\rank_{W(k)}\frac{\mathfrak g}{Fil^0\mathfrak g}$, where
$\mathfrak{g}$ is the Lie algebra of $\sG_B$ and the filtration is
the restriction of the tensor filtration on
$\End_{W(k)}(M_B)=M_B^\vee\otimes M_B$ via the inclusion $\mathfrak
g\subset \End_{W(k)}(M_B)$. We claim that it is one dimensional.
By the discussion on the filtration in the proof of Proposition
\ref{tensor factor has a filtered phi module structure} and Lemma
\ref{the action of group on eigenfactors}, one has an isomorphism
of Lie algebras over $W(k)$
 $
\mathfrak g\cong\oplus_{i=0}^{r-1}\mathfrak{gl}_2
 $,
such that there is a unique factor with the nontrivial induced
filtration through the isomorphism. This shows the claim. Now let
$\tilde{B}\to Z$ be a versal deformation of $B\otimes k$ as
$\sO_{\mathfrak p}$-divisible modules. Thus one has a map $\tilde
f: \sO_{\mathfrak p}\to \End(\tilde B)$ which makes the
commutative diagram
$$
\xymatrix{
 \sO_{\mathfrak p} \ar[dr]_{f} \ar[r]^{\tilde f}
                & \End(\tilde B)\ar[d]^{\otimes k}  \\
                & \End(B)             }
$$
Let $s_{cycle}\in \End(B)$ and $\tilde{s}_{cycle}\in \End(\tilde
B)$ be the images of $s\in \sO_{\mathfrak p}$ in the endomorphism
$\Z_p$-algebras. The element $s_{cycle}$ corresponds to $s_B$
under the Dieudonn\'{e} functor, and by Faltings Theorem 7.1
\cite{Fa2}, the corresponding element $\tilde s_B$ to $\tilde
s_{cycle}$, as an endomorphism of the filtered Dieudonn\'{e}
crystal attached to $\tilde B$, is a crystalline Tate cycle and is
the parallel continuation of $s_B$ over $Z$. By the universal
property Proposition 4.9 \cite{Mo}, the inclusion $Z\subset
\hat{U}_{_B}$ factors
 $ Z\subset
\hat{U}_{\sG_B}\subset \hat{U}_{_B}
 $.
As both $Z$ and $\hat{U}_{\sG_B}$ are formally smooth of dimension
one, it follows that $Z=\hat{U}_{\sG_B}$.
\end{proof}
Let $(\sN_B,Fil^1_{\sN_B},\phi_{\sN_B}, \nabla_{\sN_B})$ be the
universal filtered Dieudonn\'{e} module attached to $\sB$ over
$\hat{U}_{\sG_B}$. Put $\sN_i=M_i\otimes R_{\sG_B}, 0\leq i\leq
r-1$. Then the Tate cycle $s_B\in \End_{R_{\sG_B}}(\sN_B)$ induces
the eigen decomposition
$$
(\sN_B,Fil^1_{\sN_B},\nabla_{\sN_B})=\oplus_{i=0}^{r-1}(\sN_i,Fil^1_{\sN_i},
\nabla_{\sN_i}),
$$
where $Fil^1_{\sN_i}$ (resp. $\nabla_{\sN_i}$) is the restriction of
$Fil^1_{\sN_B}$ (resp. $\nabla_{\sN_B}$) to $\sN_i$. However the
eigen decomposition is not preserved by $\phi_{\sN_B}$: recall that
$\phi_{\sN_B}=u\circ(\phi_{M_B'}\otimes \phi_{\hat{U}_{\sG_B}})$. As
$U_{\sG_B}\subset \sG_B$, $u$ preserves the eigen decomposition by
Lemma \ref{the action of group on eigenfactors}. So $\phi_{\sN_B}$
permutes the factors in the eigen decomposition in a cyclic way. In
order to state the following decomposition result, we need to
introduce the category $\mathcal{MF}_{big,r}^{\nabla}$, which is
analogous to the category $\mathcal{MF}_{big}^{\nabla}$ introduced
by Faltings (see c)-d) Ch. II \cite{Fa2}). The category
$\mathcal{MF}_{big,r}^{\nabla}(R_{\sG_{B}})$ consists of four tuples
$(N,Fil,\phi_r,\nabla)$, where $N$ is a free $R_{\sG_B}$-module,
$Fil$ is a sequence of $R_{\sG_B}$-submodules with $Gr_{Fil}(N)$
torsion free,
$$
\phi_r: N\otimes_{R_{\sG_B},\phi_{\hat{U}_{\sG_B}}^r}R_{\sG_B}\to
N
$$ satisfies the divisibility condition $\phi_r(Fil^i)\subset p^{i}N$,
and $\nabla$ an integrable connection satisfying the Griffiths
transversality and finally $\phi_r$ is parallel with respect to
$\nabla$. Summarizing the above discussions, we have shown the
\begin{proposition}\label{claim 1}
The object $(\sN_B,Fil^1_{\sN_B},\nabla_{\sN_B})$ has a
decomposition
$$
(\sN_B,Fil^1_{\sN_B},\nabla_{\sN_B})=\oplus_{i=0}^{r-1}(\sN_i,Fil^1_{\sN_i},
\nabla_{\sN_i}),
$$
such that $\phi_{\sN_B}$ permutes the factors cyclically.
Consequently, one has a direct sum decomposition in the category
$\mathcal{MF}_{big,r}^{\nabla}(\hat{U}_{\sG_B})$:
$$
(\sN_B,Fil^1_{\sN_B},\phi_{\sN_B}^r,\nabla_{\sN_B})=\bigoplus_{i=0}^{r-1}(\sN_i,Fil^1_{\sN_i},
\phi_{\sN_i},\nabla_{\sN_i}),
$$
where $\phi_{\sN_i}$ is the restriction of $\phi_{\sN_B}$ to
$\sN_i$.
\end{proposition}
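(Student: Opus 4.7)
The plan is to bootstrap the proposition from the eigen decomposition on the Dieudonn\'{e} module of the closed fiber (Lemma \ref{lattic structure of fontaine functors}) to the versal deformation, using that the Tate cycle $s_B$ is horizontal and commutes with the structure group $\sG_B$. Concretely, I start from $M_B = \oplus_{i=0}^{r-1} M_i$, scalar-extend to $R_{\sG_B}$, and set $\sN_i := M_i \otimes_{W(k)} R_{\sG_B}$, so that tautologically $\sN_B = \oplus \sN_i$ as free $R_{\sG_B}$-modules, where $\sN_i$ is the $s_i$-eigenspace of the endomorphism $s_B$ (acting through the $R_{\sG_B}$-linear extension).

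Next I verify the extra structures. For the filtration: the cocharacter $\mu_{B'}: \G_m \to \sG_B$ defining $Fil^1_{M_B'}$ factors through the centralizer of $s_B$ in $\sG_B$ because $\sG_B$ itself is the group defined by the Tate cycle and commutes with $s_B \in \End(M_B)$; hence $Fil^1_{M_B'}$, and therefore $Fil^1_{\sN_B} = Fil^1_{M_B'}\otimes R_{\sG_B}$, is the direct sum of the induced filtrations $Fil^1_{\sN_i}$ on each eigenfactor. (In fact by Corollary \ref{tensor factor has a filtered phi module structure} only one eigenfactor carries a nontrivial step.) For the connection: the defining property of $\nabla_{\sN_B}$ in Faltings' construction is that it is the unique integrable connection making the filtered $F$-crystal an object of $MF_{[0,1]}^{\nabla}$; since $s_B$ is a crystalline Tate cycle and hence horizontal (the parallel continuation is automatic in Faltings' theory, cf. \S7 \cite{Fa3}), its eigenspaces $\sN_i$ are $\nabla_{\sN_B}$-stable, and the restrictions $\nabla_{\sN_i}$ are integrable.

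For the Frobenius the key observation, which is already indicated in the remark preceding the statement, is that $\phi_{\sN_B} = u \circ (\phi_{M_B'} \otimes \phi_{\hat U_{\sG_B}})$ with $u \in U_{\sG_B}(R_{\sG_B}) \subset \sG_B(R_{\sG_B})$. By Lemma \ref{the action of group on eigenfactors}, $\sG_B$ acts block-diagonally with respect to the eigen decomposition, so $u$ preserves each $\sN_i$; on the other hand $\phi_{M_B'}$ is $\sigma$-linear and $s_{i+1 \bmod r} = s_i^\sigma$, hence $\phi_{M_B'}$ sends $M_i$ into $M_{i+1 \bmod r}$. Combining, $\phi_{\sN_B}$ cyclically permutes the $\sN_i$. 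Iterating $r$ times, $\phi_{\sN_B}^r$ preserves each $\sN_i$, and the restriction $\phi_{\sN_i} := \phi_{\sN_B}^r|_{\sN_i}$ is a $\phi_{\hat U_{\sG_B}}^r$-semilinear map on $\sN_i$.

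It remains to check that each summand $(\sN_i, Fil^1_{\sN_i}, \phi_{\sN_i}, \nabla_{\sN_i})$ satisfies the axioms of $MF_{big,r}^\nabla(\hat U_{\sG_B})$: freeness of $\sN_i$ and torsion-freeness of the graded are inherited from $\sN_B$; the divisibility $\phi_{\sN_i}(Fil^1_{\sN_i}) \subset p\,\sN_i$ follows by iterating the divisibility for $\phi_{\sN_B}$ along the cyclic orbit and unwinding that a factor of $p$ is produced exactly once (namely at the unique index where $Fil^1$ is nontrivial), and the same iteration shows the more general condition on $Fil^j$; Griffiths transversality and $\phi$-$\nabla$ compatibility for $(\sN_i, Fil, \phi_{\sN_i}, \nabla_{\sN_i})$ descend from the corresponding properties on $\sN_B$ since the eigen decomposition is $\nabla$-stable and $\phi_{\sN_B}$-equivariant up to the cyclic shift. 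The main obstacle I anticipate is the last axiom-check, in particular unpacking the divisibility for $\phi_{\sN_B}^r$ on a single eigenfactor: one must track how the factor of $p$ coming from $Fil^1$ at the unique nontrivially-filtered index propagates under $r-1$ further unfiltered applications of the shift, and verify that the outcome lands in $p\, \sN_i$ (and more generally that $Fil^j$ goes into $p^j\,\sN_i$); this is essentially a bookkeeping argument but is where care is needed.
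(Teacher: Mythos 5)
Your argument is essentially the paper's own: in the discussion immediately preceding the proposition the paper sets $\sN_i = M_i\otimes R_{\sG_B}$, notes that the eigendecomposition under $s_B$ is respected by the filtration and connection, and deduces the cyclic permutation of the factors under $\phi_{\sN_B}$ from the formula $\phi_{\sN_B}=u\circ(\phi_{M_B'}\otimes\phi_{\hat U_{\sG_B}})$ together with Lemma \ref{the action of group on eigenfactors} (for $u$) and the $\sigma$-semilinearity of $\phi_{M_B'}$ (for the shift). Your extra bookkeeping on the $MF_{big,r}^{\nabla}$ axioms — in particular that the single factor of $p$ is produced at the unique nontrivially-filtered index and then carried around the cycle — is correct and routine, just more explicit than the paper, which simply states the proposition as a summary of the preceding discussion.
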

As a consequence, one can define an object in
$\mathcal{MF}_{[0,1]}^{\nabla}(\hat{U}_{\sG_B})$ by equipping the
tensor product
$\otimes_{i=0}^{r-1}(\sN_B,Fil^1_{\sN_B},\nabla_{\sN_B})$ with the
Frobenius $\phi_{ten}$, a construction mimic to Proposition
\ref{tensor factor has a filtered phi module structure}.
\subsection{Tensor decomposition of the universal filtered Dieudonn\'{e} module over a formal
neighborhood}\label{subsection on formal tensor decomposition}
Notation as the introductory part of the section. Let $A$ be the
abelian scheme over $\sO_E$ with the closed fiber (resp. generic
fiber) $A_0$ (resp. $A^0$) given by $x_0$ (resp $x^0$). Put
$L_A=H_{\Z_p}=H^1_{et}(\bar{A^0},\Z_p)$. For simplicity of notation,
we use the same letters $A$ etc. to mean the associated
$p$-divisible groups. Recall that Corollary \ref{tensor
decomposition of galois representation} gives a $\Gal_E$-lattice
decomposition $ L_A=(V_{\Z_p}\otimes U_{\Z_p})^{\oplus
2^{\epsilon(D)}}
 $.
Let $A_1$ and $A_2$ be the two $p$-divisible groups over $\sO_E$
corresponding to the lattice $V_{\Z_p}$ and $U_{\Z_p}$
respectively by the theorem of Breuil (see \cite{Br}). Write
$L_{A_1}=V_{\Z_p}$ and $L_{A_2}=U_{\Z_p}$. Let
$(M_A,Fil^1_A,\phi_A)$ be the filtered Dieudonn\'{e} module
attached to $A$. Similar notations for $A_1$ and $A_2$.
\begin{proposition}\label{structure of D_crys in lattice level}
One has a natural isomorphism of filtered $\phi$-modules:
$$
(M_A,Fil^1_A,\phi_A)\cong[(M_{A_1},Fil^1_{A_1},\phi_{A_1})\otimes(M_{A_2},Fil^1_{A_2},\phi_{A_2})
]^{\oplus 2^{\epsilon (D)}},
$$
where the factor $(M_{A_1},Fil^1_{A_1},\phi_{A_1})$ is naturally
isomorphic to $(\otimes_{i=0}^{r-1}M_i,Fil^1_{ten},\phi_{ten})$ in
Proposition \ref{tensor factor has a filtered phi module
structure}, and the factor $(M_{A_2},Fil^1_{A_2},\phi_{A_2})$ is a
unit crystal.
\end{proposition}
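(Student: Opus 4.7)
The strategy is to propagate the Galois-theoretic tensor decomposition of Corollary \ref{tensor decomposition of galois representation} to the crystalline side via the $p$-adic comparison, using Kisin's integral refinement to keep track of $\Z_p$/$W(k)$-lattices. I proceed in three steps.

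First, I transfer the Galois-lattice decomposition to the filtered Dieudonné module. Starting from $L_A=(L_{A_1}\otimes L_{A_2})^{\oplus 2^{\epsilon(D)}}$, the $p$-adic comparison isomorphism $L_A\otimes_{\Z_p}B_{crys}\cong M_A\otimes_{W(k)}B_{crys}$ together with taking $\Gal_E$-invariants gives the rational tensor decomposition of filtered $\phi$-modules. For the integral statement, I invoke the $W(k)$-linear lattice identification $L_A\otimes_{\Z_p}W(k)\cong M_A$ furnished by Kisin's Corollary 1.4.3 (3) of \cite{Ki} (this is already how $\sG_B$ was propagated from $G_B$ earlier); this converts the $\Gal_E$-stable lattice tensor decomposition into one of $W(k)$-lattices inside $M_A$, with the filtration and Frobenius intertwined through the comparison.

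Second, I identify the $A_1$-factor with $(\otimes_{i=0}^{r-1}\sM_i,Fil^1_{ten},\phi_{ten})$. At the rational level, this is exactly Proposition \ref{structure of D_crys} applied to the crystalline $\Q_{p^r}$-representation $V_{\Q_p}$ (whose decomposition is supplied by Proposition \ref{second crystalline tensor decomposition}). The lattice structure is provided by Lemma \ref{lattic structure of fontaine functors}: each $M_i$ is a lattice in $D^{(i)}_{crys,r}(V_1\otimes \Q_p)$, and $\otimes_{i=0}^{r-1} M_i$ is a lattice in $D_{crys}(V_{\Q_p})$ compatibly with the tensor filtration and the cyclic Frobenius $\phi_{ten}$, as already observed in Corollary \ref{tensor factor has a filtered phi module structure}.

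Third, I show that $M_{A_2}$ is a unit crystal. By Proposition \ref{first crystalline tensor decomposition}, $U_{\Q_p}$ is potentially unramified; after making a further finite unramified extension of $E$ (as is freely allowed in the setup at the beginning of the section), the $\Gal_E$-lattice $U_{\Z_p}$ is actually unramified. Such a lattice corresponds under Breuil's anti-equivalence to an étale $p$-divisible group $A_2/\sO_E$, whose attached filtered Dieudonné module satisfies $Fil^1_{A_2}=0$ and has $\phi_{A_2}$ an isomorphism: this is precisely the condition of being a unit crystal.

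The main obstacle is the lattice-level compatibility in Step 1: the category of $p$-divisible groups is not closed under tensor product, so one cannot literally form $A_1\otimes A_2$ as a $p$-divisible group, and all tensor operations must be carried out on the filtered Dieudonné module side. One must verify that the decomposition $M_A\cong [M_{A_1}\otimes M_{A_2}]^{\oplus 2^{\epsilon(D)}}$ transports both the Hodge filtration and the Frobenius faithfully — in particular, that the Frobenius on the $A_1$-factor cycles the eigencomponents $\{\sM_i\}$ in the way prescribed by $\phi_{ten}$ (this is the same cyclic permutation phenomenon that appeared in Proposition \ref{claim 1}, and is ultimately dictated by the $\sigma$-semilinearity of $\phi$ combined with the splitting $\Z_{p^r}\otimes_{\Z_p}W(k)\cong\prod_{i=0}^{r-1}W(k)$).
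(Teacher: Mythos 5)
Your Steps 2 and 3 are essentially in accord with the paper, but Step 1 — the integral compatibility, which you yourself flag as ``the main obstacle'' — invokes the wrong tool and leaves the key point unproved. Kisin's Corollary 1.4.3~(3) furnishes a $W(k)$-linear isomorphism $L_A\otimes_{\Z_p}W(k)\cong M_A$ that matches the \'etale Tate cycles with the crystalline Tate cycles, but nothing more: it carries neither the Hodge filtration nor the Frobenius (indeed $L_A$ has no such data, so it cannot). Having the $\Gal_E$-stable lattice decomposition of $L_A$ and the rational decomposition of $D_{crys}$ does not by itself show that the integral filtration $Fil^1_A\subset M_A$ and the integral Frobenius $\phi_A$ respect the induced $W(k)$-lattice tensor decomposition; that is precisely what must be verified and the phrase ``intertwined through the comparison'' is where the argument is missing, since the comparison isomorphism only identifies things after tensoring with the huge ring $B_{crys}$.

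The paper closes this gap by a different route: it applies Kisin's functor $\mathfrak M$ from Galois lattices to Breuil--Kisin ($\mathfrak S$-) modules to the decomposition $L_A=(L_{A_1}\otimes L_{A_2})^{\oplus 2^{\epsilon(D)}}$, cites the fact (from the proof of Theorem 1.2.2 in \cite{Ki}) that $\mathfrak M$ respects tensor products, and then uses Theorem 1.4.2 and Corollary 1.4.3~(i) of \cite{Ki} to transfer the resulting $\mathfrak S$-module decomposition to the filtered Dieudonn\'e module side. For the identification of the $A_1$-factor with $(\otimes_{i=0}^{r-1}M_i,Fil^1_{ten},\phi_{ten})$ the paper similarly realizes $L_{A_1}$ as a $\Gal_E$-summand of $L_B^{\otimes r}$ via taking $\Gal(\Z_{p^r}|\Z_p)$-invariants, which lets it exploit the same $\mathfrak S$-module functoriality; your Step 2 only cites Lemma \ref{lattic structure of fontaine functors} and Corollary \ref{tensor factor has a filtered phi module structure}, which describe the abstract lattice $\otimes M_i$ but do not, by themselves, identify it with the filtered Dieudonn\'e module $M_{A_1}$ coming from the $p$-divisible group $A_1$. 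In short, you correctly diagnose the difficulty but do not supply the $\mathfrak S$-module mechanism the paper actually uses to overcome it.
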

\begin{proof}
We have shown the above isomorphisms after inverting $p$ of both
sides: the first isomorphism is a consequence of Propositions
\ref{first crystalline tensor decomposition}, \ref{second
crystalline tensor decomposition} as the functor $D_{crys}$ commutes
with tensor product. The second isomorphism is Proposition
\ref{structure of D_crys}. Also since $U_{\Q_p}$ is an unramified
$\Gal_E$-representation, $(M_{A_2},\phi_{A_2})$ is a unit crystal
and the filtration $Fil^1_{A_2}$ is trivial. To show that the
isomorphisms hold without inverting $p$, we shall apply the theory
of $\mathfrak S$-modules of Kisin developed in \cite{Ki0} and \S1.2,
\S1.4 \cite{Ki}. Consider the first isomorphism: apply first the
functor $\mathfrak M$ to the $\Gal_E$-lattice decomposition
 $
L_A=(L_{A_1}\oplus L_{A_2})^{\oplus 2^{\epsilon(D)}}
 $.
From the proof of Theorem 1.2.2 \cite{Ki} (see 1.2.2 loc. cit.),
one sees that the functor $\mathfrak M$ respects the tensor
product. So after this step one obtains a corresponding
decomposition of $\mathfrak S$-modules. To get the decomposition
of the filtered Dieudonn\'{e} modules as claimed in the first
isomorphism, one applies next Theorem 1.4.2 and Corollary 1.4.3
(i) loc. cit. to each factor in the previous decomposition of
$\mathfrak S$-modules. Consider then the second isomorphism: by
Corollary \ref{tensor decomposition of galois representation}, one
has a tensor decomposition of $\Z_{p^r}[\Gal_E]$-modules:
 $
L_{A_1}\otimes_{\Z_p}\Z_{p^r}=\bigotimes_{i=0}^{r-1}L_{B,\sigma^i}
 $,
where $L_{B,\sigma^i}=L_B\otimes_{\Z_{p^r},\sigma^i}\Z_{p^r}$ ,
which is also equal to $L_i$ in the eigen decomposition of
$L_B\otimes_{\Z_p} \Z_{p^r}$ with respect to $s_{B,et}$ in the
proof of Lemma \ref{lattic structure of fontaine functors}. Taking
the $r$-th tensor power of
 $
L_B\otimes_{\Z_p}\Z_{p^r}=\oplus_{i=0}^{r-1}L_i
 $,
and then the $\Gal(\Z_{p^r}|\Z_p)$-invariants of both sides, one
gets a natural direct decomposition
 $
L_B^{\otimes r}=L_{A_1}\oplus L_{A_1}'
 $ of $\Z_p[\Gal_E]$-modules.
Thus $L_{A_1}$ is naturally isomorphic to a $\Gal_E$-sublattice of
$L_B^{\otimes r}$. Proposition \ref{structure of D_crys} shows that
this sublattice is in fact of Hodge-Tate weights $\{0,1\}$ with the
induced filtered $\phi$-module structure given by Proposition
\ref{tensor factor has a filtered phi module structure}. This
implies the second isomorphism.
\end{proof}
Put $G_{A}=G_{\Z_p}$ and $\sG_A=G_A\times_{\Z_p}W(k)\subset
\GL_{W(k)}(M_A)$, the subgroup defined by the corresponding
crystalline Tate cycles. Recall that after a conjugation by an
element in $G(\Q_p)$, there is a central isogeny
 $
\Z_p^*\times \prod_{i=1}^{n}\SL_2(\sO_{F_{\mathfrak p_i}})\to G_A
 $.
Let $G_{A_1}$ (resp. $G_{A_2}$) be the image of $\Z_p^*\times
\SL_2(\sO_{F_{\mathfrak p_1}})$ (resp.
$\prod_{i=2}^{n}\SL_2(\sO_{F_{\mathfrak p_i}})$) in $\GL(L_{A_1})$
(resp. $\SL(L_{A_2})$). By the construction of the tensor
decomposition, one has the following commutative diagram:
$$
\xymatrix{\Gal_E\ar[r]^{}\ar[dr]^{}
&G_A\ar[r]^{\twoheadrightarrow}\ar[d]_{}
&G_{A_1}\times G_{A_2}\ar[d]^{}\ar[dl]_{\hookleftarrow}\\
 &\GL(L_{A_1})\times \SL(L_{A_2})
\ar[r]^{\quad \otimes}&\GL(L_{A_1}\otimes L_{A_2}).}
$$
Consider the group homomorphism
$$
\otimes^r: \GL(L_B)\to \GL(L_B^{\otimes r}), g\mapsto (g^{\otimes
r}: v_1\otimes \cdots\otimes v_r\mapsto g(v_1)\otimes\cdots
\otimes g(v_r)).
$$
It is a central isogeny over the image.
\begin{lemma}\label{r-self tensor map}
The restriction of $\otimes^r$ to the subgroup $G_B$ factors
$$
\otimes^r|_{G_B}: G_B\to \GL(L_{A_1})\times
\GL(L_{A_1}')\subset\GL(L_B^{\otimes r}).
$$
\end{lemma}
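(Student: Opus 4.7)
The strategy is to base-change to $\Z_{p^r}$, where everything decomposes according to the eigenspaces of $s_{B,et}$, verify preservation summand-by-summand, and then descend back to $\Z_p$ using Galois invariants.

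First I will tensor with $\Z_{p^r}$ and use the eigen decomposition $L_B\otimes_{\Z_p}\Z_{p^r}=\oplus_{i=0}^{r-1}L_i$ from the proof of Lemma \ref{lattic structure of fontaine functors}. The key observation is the compatibility established in Lemma \ref{the action of group on eigenfactors}: because $G_B$ is by definition the stabilizer of the Tate cycle $s_{B,et}$, the $G_B$-action on $L_B\otimes\Z_{p^r}$ commutes with $s_{B,et}\otimes 1$ and therefore preserves each eigenspace $L_i$. Concretely, under the identification $G_B\cong\GL_2(\Z_{p^r})$, an element $g$ corresponding to a matrix $A\in\GL_2(\Z_{p^r})$ acts on $L_i=L_B\otimes_{\Z_{p^r},\sigma^i}\Z_{p^r}$ by the matrix $\sigma^i(A)$.

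Next I will describe the induced decomposition of the $r$-fold tensor power. Using $L_B^{\otimes_{\Z_p}r}\otimes_{\Z_p}\Z_{p^r}\cong(L_B\otimes_{\Z_p}\Z_{p^r})^{\otimes_{\Z_{p^r}}r}$, one obtains
$$
L_B^{\otimes r}\otimes_{\Z_p}\Z_{p^r}=\bigoplus_{(i_0,\ldots,i_{r-1})\in\{0,\ldots,r-1\}^r}L_{i_0}\otimes_{\Z_{p^r}}\cdots\otimes_{\Z_{p^r}}L_{i_{r-1}},
$$
and $g^{\otimes r}$ visibly preserves each summand, acting on $L_{i_0}\otimes\cdots\otimes L_{i_{r-1}}$ as $\sigma^{i_0}(A)\otimes\cdots\otimes\sigma^{i_{r-1}}(A)$.

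Then I will identify $L_{A_1}\otimes_{\Z_p}\Z_{p^r}$ inside this decomposition. By the very construction in the proof of Proposition \ref{structure of D_crys in lattice level}, $L_{A_1}\otimes_{\Z_p}\Z_{p^r}=\bigotimes_{i=0}^{r-1}L_{B,\sigma^i}=L_0\otimes_{\Z_{p^r}}\cdots\otimes_{\Z_{p^r}}L_{r-1}$, and the $\Gal(\Z_{p^r}|\Z_p)$-action permutes the family of summands indexed by the $\sigma$-orbit of $(0,1,\ldots,r-1)$ (which consists of the $r$ cyclic shifts, as $\sigma(L_i)=L_{i+1\bmod r}$). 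Thus the Galois-stable submodule $\bigoplus_{j=0}^{r-1}\sigma^j(L_0\otimes\cdots\otimes L_{r-1})\subset L_B^{\otimes r}\otimes\Z_{p^r}$ has Galois invariants exactly $L_{A_1}\subset L_B^{\otimes r}$, and the direct complement arising from the remaining orbits of tuples gives $L_{A_1}'$.

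Finally, since $g\in G_B$ is $\Z_p$-linear on $L_B$, the endomorphism $g^{\otimes r}$ commutes with the $\Gal(\Z_{p^r}|\Z_p)$-action on $L_B^{\otimes r}\otimes\Z_{p^r}$; being a sum of summands preserved by $g^{\otimes r}$, the orbit sum above is $g^{\otimes r}$-stable, and taking Galois invariants shows $g^{\otimes r}(L_{A_1})\subset L_{A_1}$ and similarly $g^{\otimes r}(L_{A_1}')\subset L_{A_1}'$. This yields the asserted factorization $\otimes^r|_{G_B}\colon G_B\to\GL(L_{A_1})\times\GL(L_{A_1}')$. The only delicate point is the bookkeeping that matches the lattice $L_{A_1}$ with the invariants of the correct Galois orbit of tuples; once that identification is in place, the preservation is forced by the commutativity of $G_B$ with both $s_{B,et}$ and the Galois action.
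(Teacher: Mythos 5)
Your argument is essentially the paper's own proof, expanded: both proceed by noting that $G_B$ stabilizes $s_{B,et}$ (up to scalar), hence $g\otimes 1$ preserves the eigendecomposition $L_B\otimes_{\Z_p}\Z_{p^r}=\oplus_i L_i$, hence $g^{\otimes r}$ preserves each summand $L_{i_0}\otimes\cdots\otimes L_{i_{r-1}}$ of $L_B^{\otimes r}\otimes_{\Z_p}\Z_{p^r}$, and then descend to $\Z_p$ using the commutation of $g^{\otimes r}$ with the semilinear $\Gal(\Z_{p^r}|\Q_p)$-action. So the mechanism is identical; you have filled in the bookkeeping that the paper compresses into four sentences.

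One calibration point, which is really a rank count and operates at the same level of imprecision as the paper's own proof and its preceding Proposition: the $\sigma$-orbit sum $\bigoplus_{j=0}^{r-1}\sigma^j(L_0\otimes\cdots\otimes L_{r-1})$ has $\Z_{p^r}$-rank $r\cdot 2^r$ (it consists of $r$ distinct summands each of $\Z_{p^r}$-rank $2^r$), so its Galois invariants form a $\Z_p$-lattice of rank $r\cdot 2^r$, whereas $\mathrm{rank}_{\Z_p}L_{A_1}=2^r$. Thus the invariants of the orbit sum are a $G_B$-stable lattice isomorphic to $L_{A_1}^{\oplus r}$, not literally to $L_{A_1}$, and the clause ``has Galois invariants exactly $L_{A_1}$'' should be weakened. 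The conclusion still follows by noting that this rank-$r\cdot 2^r$ piece carries a natural $\Z_{p^r}$-structure under which it is $\cong L_{A_1}\otimes_{\Z_p}\Z_{p^r}$ and $G_B$ acts $\Z_{p^r}$-linearly, so that a $G_B$-stable direct summand isomorphic to $L_{A_1}$ can be split off; but as written the step jumps over this reconciliation, just as the paper's own terse proof does.
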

\begin{proof}
Recall that for a $g\in \GL(L_B)$, $g\in G_{B}$ iff $g(s_B)=s_B$
up to a scalar. This implies that $g\otimes 1$ preserves the eigen
decomposition of $L_B\otimes_{\Z_p}\Z_{p^r}$. So
$\otimes^r(g\otimes 1)$ respects the direct sum decomposition
$$
L_B^{\otimes
r}\otimes_{\Z_p}\Z_{p^r}=L_{A_1}\otimes_{\Z_p}\Z_{p^r}\oplus
L_{A_1}'\otimes_{\Z_p}\Z_{p^r}.
$$
Thus $\otimes^r(g)$ preserves the decomposition $ L_B^{\otimes
r}=L_{A_1}\oplus L_{A_1}'
 $.
Hence the lemma follows.
\end{proof}
Let $\xi_{et}: G_{B}\to \GL(L_{A_1})$ be the composite of
$\otimes^r|_{G_B}$ with the projection to the first factor in the
above lemma. The reductive subgroup $G_{A_1}\subset  \GL(L_{A_1})$
is defined by a finite set of tensors in $L_{A_1}^{\otimes}$.
\begin{lemma}\label{form of tensors for G_A_1}
A tensor in $L_{A_1}^{\otimes n}$ is fixed by $G_{A_1}$ only if
$n=2a$ is even, and it must be of form $\det(L_{A_1})^{\otimes
a}\subset L_{A_1}^{\otimes n}$.
\end{lemma}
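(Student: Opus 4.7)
My plan is to base-change to $\Z_{p^r}$, where $G_{A_1}$ splits and the representation $L_{A_1}$ becomes an external tensor product, apply classical $\SL_2$-invariant theory to obtain the parity constraint and a candidate fixed tensor, and then descend via Galois.

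First I would observe that $\sO_{F_{\mathfrak p}}\otimes_{\Z_p}\Z_{p^r}\cong\prod_{i=0}^{r-1}\Z_{p^r}$ via the embeddings $\sigma^i$, so that $G_{A_1}\otimes_{\Z_p}\Z_{p^r}\cong\Z_{p^r}^{*}\cdot\prod_{i=0}^{r-1}\SL_2(\Z_{p^r})$, and by the tensor decomposition of Corollary \ref{tensor decomposition of galois representation} one has $L_{A_1}\otimes_{\Z_p}\Z_{p^r}\cong V_1\otimes\cdots\otimes V_{1,\sigma^{r-1}}$, with the $i$-th $\SL_2$-factor acting on $V_{1,\sigma^i}$ through its standard $2$-dimensional representation and $\Z_{p^r}^{*}$ acting by scalars. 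A $G_{A_1}$-fixed tensor in $L_{A_1}^{\otimes n}$, extended to $\Z_{p^r}$, is in particular fixed by $\prod_i\SL_2(\Z_{p^r})$, so the problem reduces to $\SL_2^{\times r}$-invariant theory.

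Second I would compute these $\SL_2^{\times r}$-invariants: after rearranging slots, $L_{A_1}^{\otimes n}\otimes\Z_{p^r}\cong\bigotimes_{i=0}^{r-1}V_{1,\sigma^i}^{\otimes n}$, and the invariants split as $\bigotimes_i(V_{1,\sigma^i}^{\otimes n})^{\SL_2}$. By classical $\SL_2$-invariant theory on the standard $2$-dimensional representation, each factor $(V^{\otimes n})^{\SL_2}$ vanishes when $n$ is odd, which forces the parity condition $n=2a$ in the lemma, and for $n=2a$ it is generated by iterated contractions built from the $\SL_2$-invariant $2$-form $\omega_i\in V_{1,\sigma^i}^{\otimes 2}$. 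The distinguished invariant $\bigotimes_{i=0}^{r-1}\omega_i^{\otimes a}$ embedded in $L_{A_1}^{\otimes 2a}\otimes\Z_{p^r}$ is precisely the image of $\det(L_{A_1})^{\otimes a}$ under the natural identification of the corestriction form with the product of the $\omega_i$'s.

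Finally I would descend from $\Z_{p^r}$ to $\Z_p$ by Galois invariance. The group $\Gal(\Z_{p^r}/\Z_p)$ acts on $L_{A_1}\otimes\Z_{p^r}$ by cyclically permuting the factors $V_{1,\sigma^i}$, and a $\Z_p$-rational tensor must be invariant under this action. The determinantal tensor $\bigotimes_i\omega_i^{\otimes a}$ is manifestly symmetric under the cyclic permutation, and together with the character constraint imposed by the scalar torus $\Z_p^{*}$ this pins the fixed tensor down to $\det(L_{A_1})^{\otimes a}$. The main obstacle is that for $a\geq 2$ the $\SL_2^{\times r}$-invariant subspace in $L_{A_1}^{\otimes 2a}\otimes\Z_{p^r}$ has dimension $C_a^r$, where $C_a$ is the $a$-th Catalan number; the substantive content of the lemma is to verify that among all Catalan contractions, only the totally factored one $\bigotimes_i\omega_i^{\otimes a}$ is compatible with both the cyclic Galois symmetry and the weight condition from the scalar torus. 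This should proceed by an explicit analysis of how the cyclic permutation acts on the Temperley--Lieb basis of contractions together with a direct comparison of $\G_m$-weights, ruling out the non-factored diagrams.
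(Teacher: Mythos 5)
Your first two steps reproduce the paper's argument in slightly different clothing: the paper works with one--dimensional subrepresentations of $\prod_{i=0}^{r-1}\GL_2(\C_p)$ via the Schur--Weyl decomposition of $(\otimes_i V_i)^{\otimes n}$, while you pass to $\prod\SL_2$-invariants and use the symplectic form $\omega_i$. These are the same computation: the only way a one-dimensional $\GL_2$-subrepresentation can sit inside $V^{\otimes n}$ ($\dim V=2$) is as $\SSS_{(a,a)}(V)=\det^a$ with $n=2a$, and the corresponding vectors are exactly the $\SL_2$-invariants. So the parity statement and the identification of the character are correct and match the paper.

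The problem is your third step, and it is not a fixable gap but an attempt to prove something that is false. You correctly note that the $\prod\SL_2$-invariant subspace of $L_{A_1}^{\otimes 2a}\otimes\Z_{p^r}$ has rank $C_a^r$, and then propose to cut this down to a line using (a) the $\G_m$-weight and (b) cyclic Galois symmetry. Neither can work. For (a): the central $\G_m\subset G_{A_1}$ acts by $z^{2a}$ on \emph{all} of $L_{A_1}^{\otimes 2a}$, so every Temperley--Lieb contraction has the same $\G_m$-weight and the weight constraint eliminates nothing. For (b): Galois descent from $\Z_{p^r}$ to $\Z_p$ is a form of descent and preserves rank; the $\Z_p$-module of $\Z_p$-rational $\SL_2^r$-semi-invariants in $L_{A_1}^{\otimes 2a}$ therefore has rank $C_a^r$, not $1$, for any $a\geq 2$. (For $r=1$, $a=2$ one already sees two independent $\SL_2$-invariant lines in $V^{\otimes 4}$.) There is no "totally factored" distinguished invariant after descent, and the uniqueness you are trying to establish simply does not hold.

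What saves the situation is that neither the paper's proof nor its subsequent use of the lemma actually requires uniqueness. The proof in the paper establishes, exactly as you do, that $n$ must be even and that any invariant line transforms under $\prod\GL_2(\C_p)$ by the character $\prod_i\det(L_i)^a$. Read literally, "$\det(L_{A_1})^{\otimes a}\subset L_{A_1}^{\otimes n}$" does not even typecheck for $r\geq 2$ (the degree of $\det L_{A_1}$ in $L_{A_1}$ is $2^r$, not $2$); the intended meaning is the isotypic statement about the character. That is also all the Corollary following the lemma uses: once one knows any $G_{A_1}$-fixed tensor in $L_{A_1}^{\otimes n}$ is annihilated by $\prod\SL_2$ and lies in a single $\G_m$-eigenspace, the map $\xi_{et}=\otimes^r$ from $G_B=\GL_2(\Z_{p^r})$ lands in $\G_m\cdot\prod\SL_2$ over $\C_p$ and hence stabilizes the line. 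So the correct resolution of your "main obstacle" is to delete step three, not to carry it out: your steps one and two already prove everything that the paper proves and everything that is subsequently needed.
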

\begin{proof}
Assume $n$ positive. The $G_{A_1}$-action respects the tensor
decomposition $
L_{A_1}\otimes_{\Z_{p}}\Z_{p^r}=\otimes_{i=0}^{r-1}L_i
 $. A tensor in $L_{A_1}^{\otimes n}$ is fixed by $G_{A_1}$ is by
definition a rank one $\Z_p$-subrepresentation of $G_{A_1}$. So it
gives rise to a rank one $\Z_{p^r}$-subrepresentation of $G_{A_1}$
in $[\otimes_{i=0}^{r-1}L_i]^{\otimes n}$.
Recall that the $G_{A_1}$-action on $L_i$ is isomorphic to the
$i$-th $\sigma$-conjugate of the standard action of
$\GL_2(\Z_{p^r})$ on $\Z_{p^r}^{\oplus 2}$. Then we study the
$\GL_2(\Q_{p^r})$-invariant lines in $[\otimes_{i=0}^{r-1}(\Q_{p^r}^{\oplus
2})_i]^{\otimes n}$, where $(\Q_{p^r}^{\oplus 2})_i:=\Q_{p^r}^{\oplus
2}\otimes_{\Q_{p^r},\sigma^i}\Q_{p^r}$. For that we apply the standard
finite dimensional representation theory of complex Lie groups (see
\cite{FH} Ch. 6). For a partition $\lambda$ of $n$, one has the
irreducible decomposition of $\prod_{i=0}^{r-1}\GL_2(\Q_{p^r})$-modules:
$$
\SSS_{\lambda}[\otimes_{i=0}^{r-1}(\Q_{p^r}^{\oplus 2})_i]
=\bigoplus_{\lambda_0,\cdots,\lambda_{r-1}}
C_{\lambda_0\cdots\lambda_{r-1}\lambda}\cdot\SSS_{\lambda_0}(\Q_{p^r}^{\oplus
2})_0\otimes \cdots\otimes\SSS_{\lambda_{r-1}}(\Q_{p^r}^{\oplus
2})_{r-1},
$$
where $\lambda_i$ in the summation runs through all possible
partitions of $n$. As $\dim (\Q_{p^r}^{\oplus 2})_{i}=2$, the only
possible $\lambda_i$s are of form $\{n-a,a\}$ for $a\leq
\frac{n}{2}$, and
$$
\SSS_{\{n-a,a\}}(\Q_{p^r}^{\oplus 2})_{i}=\left\{
                                          \begin{array}{ll}
                                            \SSS_{\{n-2a\}}(\Q_{p^r}^{\oplus 2})_{i}=\Sym^{n-2a}(\Q_{p^r}^{\oplus 2})_{i}\otimes [\det(\Q_{p^r}^{\oplus 2})_{i}]^{a}, & \hbox{if\ $2a<n$;} \\
                                            \SSS_{\{a,a\}}(\Q_{p^r}^{\oplus 2})_{i}=[\det(\Q_{p^r}^{\oplus 2})_{i}]^{a}, & \hbox{if\ $2a=n$.}
                                          \end{array}
                                        \right.
$$
Since $\GL_2(\Q_{p^r})$ is embedded into
$\prod_{i=0}^{r-1}\GL_2(\Q_{p^r})$ via $g\mapsto (g,\sigma
g,\cdots,\sigma^{r-1}g)$, the above decomposition is also
irreducible with respect to the $\GL_2(\Q_{p^r})$-action. Summarizing
these discussions, we conclude that there exists a
$G_{A_1}$-invariant tensor $s_\alpha$ in $L_{A_1}^{\otimes n}$
only if $n=2a$ is even, and $s_\alpha\otimes 1\in
[\otimes_{i=0}^{r-1}L_i]^{\otimes n}$ is of form
$\otimes_{i=0}^{r-1}[\det(L_i)]^{a}$, which implies that
$s_\alpha\in \det(L_{A_1})^{\otimes a}$.
\end{proof}
\begin{proposition}
The morphism $\xi_{et}$ factors
$$
\xi_{et}: G_{B}\to G_{A_1}\subset \GL(L_{A_1}),
$$
and the induced morphism $\xi_{et}: G_B\to G_{A_1}$ is a central
isogeny.
\end{proposition}
\begin{proof}
Fix an even natural number $n$. Let $s_{\alpha}\in
L_{A_1}^{\otimes n}$ be a tensor for $G_{A_1}$. It is to show that
the image of $G_B$ under $\xi_{et}$ fixes $s_\alpha$. By Lemma
\ref{form of tensors for G_A_1},  $s_\alpha\otimes
1=\otimes_{i=0}^{r-1}[\det(L_i)]^{\frac{n}{2}} $. It is clear that
for a $g\in G_B$, $\otimes^r(g)\otimes 1$ stabilizes the line
$\otimes_{i=0}^{r-1}[\det(L_i)]^{\frac{n}{2}}$. This implies that
$\otimes^r(g)$ stabilizes $s_{\alpha}$. As $\xi_{et}$ is a central
isogeny over its image and both $G_B$ and $G_{A_1}$ are isomorphic
to $\GL_2(\Z_{p^r})$, $\xi_{et}$ induces a central isogeny from
$G_B$ to $G_{A_1}$.
\end{proof}
So we have the central isogenies $ G_B\times G_{A_2}
\twoheadrightarrow G_{A_1}\times G_{A_2}\twoheadleftarrow G_A
 $ of groups over $\Z_p$.
Put $\sG_{A_i}=G_{A_i}\times_{\Z_p}W(k)$. Taking the base change
to $W(k)$ one obtains central isogenies of groups over $W(k)$:
$$
\sG_B\times
\sG_{A_2}\stackrel{\xi_1}{\twoheadrightarrow}\sG_{A_1}\times
\sG_{A_2}\stackrel{\xi_2}{\twoheadleftarrow} \sG_A.
$$
For a certain natural number $l$, the cocharacter $\G_m\to
\sG_{A_1}\times \sG_{A_2}$, which is the composite
$$
\G_m\stackrel{x\mapsto x^l}{\longrightarrow} \G_m
\stackrel{\mu_{B'}\times {\rm id}}{\longrightarrow} \sG_B\times
\sG_{A_2}\stackrel{\xi_1}{\longrightarrow} \sG_{A_1}\times
\sG_{A_2},
$$
lifts to a cocharacter $\nu: \G_m\to \sG_A$. By Proposition
\ref{structure of D_crys in lattice level}, the reduction of $\nu$
modulo $p$ induces the same filtration as given by $Fil^1_A\otimes
k$ on $M_A\otimes k$. Then the filtration on $M_A$ defined by
$\nu$ corresponds to a $p$-divisible group $A'$ over $W(k)$
lifting the $p$-divisible $A\otimes k$ over $k$. We call $\nu$ in
the following by $\mu_{A'}$. One discusses the cocharacter
$\xi_1\circ \mu_{B'}$ similarly and obtains then a $p$-divisible
group $A_1'$ over $W(k)$ lifting $A_1\otimes k$. It follows that
one has an isomorphism of filtered $\phi$ modules similar to that
in Proposition \ref{structure of D_crys in lattice level} for the
filtered Dieudonn\'{e} module of $A$ by replacing $A_1$ with
$A_1'$ and $B$ in Proposition \ref{tensor factor has a filtered
phi module structure} with $B'$. Consider the opposite unipotents
$U_{\sG_B}\times id$ (resp. $U_{\sG_{A_1}}\times id$ and
$U_{\sG_A}$) induced by the cocharacter $\mu_{B'}\times id$ (resp.
$\xi_1\circ(\mu_{B'}\times id)$ and $\mu_{A'}$). By the
construction, $\xi_1$ (resp. $\xi_2$) restricts to an isogeny from
$U_{\sG_B}\times id$ to $U_{\sG_{A_1}}\times id$. (resp. from
$U_{\sG_A}$ to $U_{\sG_{A_1}}\times id$). Thus taking the
completion along the identity section, one obtains an isomorphism
$$
\hat{\xi}_{cris}=\hat{\xi}_{2}^{-1}\circ\hat{\xi}_{1}:
\hat{U}_{\sG_B} \stackrel{\cong}{\longrightarrow} \hat{U}_{\sG_A}.
$$
Let $(\sN_A,Fil^1_{\sN_A},\nabla_{\sN_A},\phi_{\sN_A})$ be the
following filtered Dieudonn\'{e} module over $\hat{U}_{\sG_A}$:
let $R_{\sG_A}$ be the complete local ring of $\hat{U}_{\sG_A}$
and $\phi_{\hat{U}_{\sG_A}}: R_{\sG_A}\to R_{\sG_A}$ the lifting
of the absolute Frobenius obtained by pulling back the
$\phi_{\hat{U}_{\sG_B}}$ via $\hat{\xi}_{cris}^{-1}$. The triple
$$
(\sN_A=M_A'\otimes_{W(k)}R_{\sG_A},
Fil^1_{\sN_A}=Fil^1_{M_A'}\otimes_{W(k)}R_{\sG_A},
\phi_{\sN_A}=u\circ(\phi_{M_A'}\otimes \phi_{\hat{U}_{\sG_A}})),
$$
where $u$ is the tautological $R_{\sG_A}$-point of $U_{\sG_A}$,
together with the connection $\nabla_{\sN_A}$ deduced from Theorem
10 \cite{Fa3}, makes the quadruple
$(\sN_A,Fil^1_{\sN_A},\nabla_{\sN_A},\phi_{\sN_A})$ an object in
$\mathcal{MF}_{[0,1]}^{\nabla}(R_{\sG_A})$. We denote again by
$\hat{\xi}_{cris}$ the equivalence of categories from
$\mathcal{MF}_{[0,1]}^{\nabla}(\hat U_{\sG_B})$ to
$\mathcal{MF}_{[0,1]}^{\nabla}(\hat U_{\sG_A})$ induced by the
isomorphism $\hat{\xi}_{cris}$.
\begin{theorem}\label{theorem on tensor decomposition over local base}
One has a natural isomorphism in the category
$\mathcal{MF}_{[0,1]}^{\nabla}(\hat{M}_{x_0})$:
$$
(H,F,\phi,\nabla)|_{\hat{M}_{x_0}}\cong\{\hat{\xi}_{cris}[\otimes_{i=0}^{r-1}(\sN_i,Fil^1_{\sN_i},\nabla_{\sN_i}),\phi_{ten}]\otimes
(M_{A_2},Fil^1_{A_2},\phi_{A_2},d)\}^{\oplus 2^{\epsilon(D)}},
$$
where
$[\otimes_{i=0}^{r-1}(\sN_i,Fil^1_{\sN_i},\nabla_{\sN_i}),\phi_{ten}]\in
\mathcal{MF}_{[0,1]}^{\nabla}(\hat U_{\sG_B})$ is the one introduced
after Proposition \ref{claim 1} and
$(M_{A_2},Fil^1_{A_2},\phi_{A_2},d)$ is a constant unit crystal with
the trivial connection.
\end{theorem}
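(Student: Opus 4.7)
The plan is to extend the pointwise tensor decomposition of Proposition~\ref{structure of D_crys in lattice level} along the formal neighborhood by using Faltings' construction of a versal deformation from an opposite unipotent and a lifted cocharacter, combined with Kisin's identification of $\hat{M}_{x_0}$ with the versal deformation $\hat{U}_{\sG_A}$ of $(A\otimes k,\{s_{\alpha,A}\})$. First I would identify $\hat{M}_{x_0}\cong \hat{U}_{\sG_A}$ (using Proposition 4.9 of \cite{Mo} together with Corollary 1.5.11 of \cite{Ki} and the construction of the integral canonical model), and check that under this identification the restriction of $(H,F,\phi,\nabla)$ to $\hat{M}_{x_0}$ is isomorphic to the four tuple $(\sN_A,Fil^1_{\sN_A},\phi_{\sN_A},\nabla_{\sN_A})$ defined just before the theorem via the data $(M_{A'},Fil^1_{M_{A'}},\phi_{M_{A'}},\mu_{A'},u\in U_{\sG_A}(R_{\sG_A}))$.

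Next I would exploit the two central isogenies $\xi_1:\sG_B\times\sG_{A_2}\twoheadrightarrow\sG_{A_1}\times\sG_{A_2}$ and $\xi_2:\sG_A\twoheadrightarrow\sG_{A_1}\times\sG_{A_2}$ constructed in the excerpt. Because $U_{\Q_p}$ is potentially unramified, the cocharacter $\mu_{A'}$ and its image under $\xi_2$ are trivial on the $\sG_{A_2}$-factor, so the associated opposite unipotents satisfy $U_{\sG_A}\cong U_{\sG_{A_1}}\cong U_{\sG_B}\times\{1\}$ via $\xi_2$ and $\xi_1$. Taking formal completions at the identity gives the isomorphism $\hat{\xi}_{cris}=\hat{\xi}_2^{-1}\circ\hat{\xi}_1:\hat{U}_{\sG_B}\xrightarrow{\sim}\hat{U}_{\sG_A}$ already recorded in the excerpt, and the compatibility of cocharacters ($\mu_{A'}$ covers $\xi_1\circ\mu_{B'}\times\mathrm{id}$ up to a positive power, and the filtration on $M_{A_2}$ is trivial) guarantees that $Fil^1_{M_{A'}}\otimes R_{\sG_A}$ matches the tensor product of $Fil^1_{M_{B'}}$-pieces (via $\hat{\xi}_{cris}$) with the constant filtration on $M_{A_2}$.

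For the Frobenius I would pull back the chosen Frobenius lift $\phi_{\hat{U}_{\sG_B}}$ along $\hat{\xi}_{cris}^{-1}$ to obtain $\phi_{\hat{U}_{\sG_A}}$; then the tautological $R_{\sG_A}$-point $u$ of $U_{\sG_A}$ corresponds under $\hat{\xi}_{cris}$ to the tautological point of $U_{\sG_B}\times\{1\}$, which by Lemma~\ref{the action of group on eigenfactors} preserves each eigen-component $\sN_i$ of $\sN_B$ and acts trivially on the $M_{A_2}$-factor. Combining this with the identification of $(M_{A_1},Fil^1_{A_1},\phi_{A_1})$ with $(\otimes_{i=0}^{r-1}M_i,Fil^1_{ten},\phi_{ten})$ from Proposition~\ref{structure of D_crys in lattice level} and Corollary~\ref{tensor factor has a filtered phi module structure}, the pulled-back Frobenius $u\circ(\phi_{M_{A'}}\otimes\phi_{\hat{U}_{\sG_A}})$ becomes exactly the tensor Frobenius $\hat{\xi}_{cris}[\phi_{ten}]\otimes\phi_{A_2}$, taken $2^{\epsilon(D)}$ times. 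Finally the connection is forced to match by Faltings' uniqueness statement (Theorem~10 of \cite{Fa3}): both sides are objects of $MF_{[0,1]}^{\nabla}(\hat{M}_{x_0})$ with the same underlying filtered $F$-module, and on such objects the integrable connection satisfying Griffiths transversality and $\phi$-parallelism is unique; the connection on $(M_{A_2},Fil^1_{A_2},\phi_{A_2})$ is then necessarily the trivial one $d$.

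The main obstacle I anticipate is ensuring that all three structures (cocharacter-induced filtration, tautological unipotent twist of Frobenius, and integrable connection) transport coherently under the central isogenies $\xi_1,\xi_2$ even though these isogenies are not isomorphisms and may introduce a finite fibre. The crucial observation that resolves this is that the central kernel of each $\xi_i$ lies in the Levi factor stabilising the chosen cocharacter, so it acts trivially on the opposite unipotent subgroups and hence on the versal deformation data; this is precisely what makes $\hat{\xi}_{cris}$ an honest isomorphism of formal schemes and lets the pointwise decomposition of Proposition~\ref{structure of D_crys in lattice level} propagate to the whole formal neighborhood without ambiguity.
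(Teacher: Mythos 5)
Your proposal follows essentially the same route as the paper: identify $\hat M_{x_0}$ with $\hat U_{\sG_A}$ via Kisin's construction of the integral model, recognize the restriction of $(H,F,\phi,\nabla)$ as Faltings' explicit versal four-tuple $(\sN_A,Fil^1_{\sN_A},\phi_{\sN_A},\nabla_{\sN_A})$, transport the decomposition from Proposition~\ref{structure of D_crys in lattice level} through $\hat\xi_{cris}$, and invoke uniqueness of the integrable $\phi$-horizontal connection. The additional detail you supply about the kernels of $\xi_1,\xi_2$ living in the Levi and acting trivially on the opposite unipotents is exactly the mechanism the paper relies on to conclude that $\hat\xi_{cris}$ is an isomorphism of formal schemes, so your account is both correct and consistent with the paper's argument.
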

\begin{proof}
From Proposition 2.3.5 \cite{Ki} and its proof, one knows that
$\hat{M}_{x_0}=\hat{U}_{\sG_A}$ is the deformation space of the
$p$-divisible group $A_0$ with Tate cycles $\subset M_A^{\otimes}$
fixed by the group $\sG_{A}\subset \GL_{W(k)}(M_A)$. By the remarks
of Faltings \S7 \cite{Fa3}, the above quadruple
$(\sN_A,Fil^1_{\sN_A},\nabla_{\sN_A},\phi_{\sN_A})$ gives an
explicit description in the category
$\mathcal{MF}^{\nabla}_{[0,1]}(\hat{M}_{x_0})$ of the restriction
$(H,F,\phi,\nabla)|_{\hat{M}_{x_0}}$. The decomposition of the
triple $(\sN_A,Fil^1_{\sN_A},\phi_{\sN_A})$ follows from the above
description of the universal filtered Dieudonn\'{e} module and the
corresponding statement of Proposition \ref{structure of D_crys in
lattice level}. Then the connection decomposes accordingly: we equip
the decomposition with the connection
$\nabla_{dec}:=(\otimes_{i=0}^{r-1} \nabla_{\sN_i}\otimes d)^{\oplus
2^{\epsilon(D)}}$. Then the decomposition of $\phi_{\sN_A}$ shows
that it is horizontal with respect to both $\nabla_{dec}$ and
$\nabla_{\sN_A}$. By the uniqueness of such a connection (see proof
of Theorem 10 \cite{Fa3}), $\nabla_{\sN_A}$ is isomorphic to
$\nabla_{dec}$ as claimed.
\end{proof}
The following consequence of the previous result will be used in the
next section.
\begin{corollary}\label{claim 3}
One has an isomorphism in the category
$\mathcal{MF}_{big,r}^{\nabla}(\hat{M}_{x_0})$:
$$
(H,F,\phi^r,\nabla)|_{\hat{M}_{x_0}}\cong\{\hat{\xi}_{cris}\otimes_{i=0}^{r-1}(\sN_i,Fil^1_{\sN_i},\phi_{\sN_i},\nabla_{\sN_i})\otimes
(M_{A_2},Fil^1_{A_2},\phi^r_{A_2},d)\}^{\oplus 2^{\epsilon(D)}}.
$$
\end{corollary}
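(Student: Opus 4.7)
The plan is to deduce the corollary directly from Theorem \ref{theorem on tensor decomposition over local base} by taking the $r$-th iterate of the Frobenius on both sides and reading off the resulting equality in the category $MF_{big,r}^{\nabla}(\hat{M}_{x_0})$.

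First, I would formalise a ``raise to the $r$-th power'' functor from $MF^{\nabla}_{[0,1]}$ to $MF^{\nabla}_{big,r}$, sending $(N,Fil,\phi,\nabla)$ to $(N,Fil,\phi^r,\nabla)$. The divisibility condition $\phi^r(Fil^1)\subset pN$, which is what $MF^{\nabla}_{big,r}$ requires here since the Hodge--Tate weights lie in $\{0,1\}$, follows by induction from $\phi(pN)=p\phi(N)\subset pN$ together with the initial inclusion $\phi(Fil^1)\subset pN$; horizontality of $\phi^r$ with respect to $\nabla$ is inherited from that of $\phi$. This operation is plainly compatible with tensor products and with the equivalence of categories $\hat{\xi}_{cris}$, so applying it to the isomorphism of Theorem \ref{theorem on tensor decomposition over local base} transforms the left-hand side into $(H,F,\phi^r,\nabla)|_{\hat{M}_{x_0}}$ and reduces the remaining work to identifying $(\phi_{ten}\otimes\phi_{A_2})^r$ on the right-hand side with the Frobenius given in the statement.

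The key computation is
$$\phi_{ten}^{\,r}\;=\;\bigotimes_{i=0}^{r-1}\phi_{\sN_i},$$
which I would verify by unwinding definitions. By construction, $\phi_{ten}$ cyclically permutes the tensor factors: for $v_i\in\sN_i$,
$$\phi_{ten}(v_0\otimes\cdots\otimes v_{r-1})=\phi_{\sN_B}(v_{r-1})\otimes\phi_{\sN_B}(v_0)\otimes\cdots\otimes\phi_{\sN_B}(v_{r-2}),$$
and after $r$ iterations the entry originally in the $i$-th slot returns to the $i$-th slot with $\phi_{\sN_B}^{\,r}$ applied. By Proposition \ref{claim 1}, $\phi_{\sN_B}^{\,r}$ preserves the eigen-decomposition and restricts on each $\sN_i$ to the operator denoted $\phi_{\sN_i}$; combining these observations yields the displayed identity, and hence $(\phi_{ten}\otimes\phi_{A_2})^r=\bigl(\bigotimes_i\phi_{\sN_i}\bigr)\otimes\phi_{A_2}^{\,r}$.

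Assembling the pieces then completes the argument: the filtration and the connection on the right-hand side remain unchanged (neither uses the Frobenius), the factor $\hat{\xi}_{cris}$ carries over formally, and the decomposition of the Frobenius just established matches the expression claimed in the statement. I do not expect a serious obstacle; the only non-routine point is the combinatorial bookkeeping that turns the cyclic $\phi_{ten}$ into the ``diagonal'' $\bigotimes\phi_{\sN_i}$, which is forced by the cyclicity of $\phi_{\sN_B}$ already established in Proposition \ref{claim 1}.
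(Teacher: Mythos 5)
Your argument is correct and is exactly what the paper has in mind: the paper states the corollary as an immediate consequence of Theorem \ref{theorem on tensor decomposition over local base}, and the only substantive point --- that $\phi_{ten}^r$ equals the tensor product of the $\phi_{\sN_i}$'s (the restrictions of $\phi_{\sN_B}^r$ to the eigenfactors) --- is the remark the paper already makes right after Lemma \ref{phi-module and direct sum} for the $D_{crys}$-analogue. Your explicit verification of the $r$-th-power functor, its compatibility with tensor products and with the base-change equivalence $\hat{\xi}_{cris}$, and the cyclic bookkeeping turning $\phi_{ten}^r$ into $\bigotimes_{i}\phi_{\sN_i}$ matches the intended (unwritten) proof.
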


\section{Second tensor power of the universal filtered Dieudonn\'{e}e module and a mass formula}\label{Section on weak Hasse-Witt pair}
Let $f_0: X_0\to M_0$ be the reduction of the universal abelian
scheme modulo $\mathfrak p$. In this section we construct a pair
$(\sP_0,\tilde F_{rel})$ over $M_0\otimes \bar k$, where $\sP_0$ is
a line bundle of negative degree and $\tilde F_{rel}:
F_{M_0}^{*r}\sP_0\to \sP_0$ is a nonzero morphism. We show that the
reduced zero divisor of $\tilde F_{rel}$ is equal to the
supersingular locus, and the multiplicity at each supersingular
point is two.

\subsection{Preliminary discussion}\label{preliminary discussion}
In this paragraph, we collect Faltings's results
(\cite{Fa2},\cite{Fa3}) into a form which we can apply in the
following conveniently. Note also that $M$ in the following
discussion could be relaxed to be an arbitrary smooth proper scheme
over $W(k)$. Let $U=\Spec R\subset M$ be a small affine subset,
which means that there is an \'{e}tale map $W(k)[T^{\pm}]\to R$. Let
$\bar R$ be the maximal extension of $R$ which is \'{e}tale in
characteristic zero (see Ch. II a) \cite{Fa2}) and
$\Gamma_R=\Gal(\bar R|R)$ be the Galois group. Let
$\mathcal{MF}_{[0,p-2]}^{\nabla}(R)$ be the category introduced in
\S3 \cite{Fa3}, and $\Rep_{\Z_p}(\Gamma_R)$ the category of
continuous representations of $\Gamma_R$ on free $\Z_p$-modules of
finite rank. By the fundamental theorem (Theorem 5* \cite{Fa3}),
there is a fully faithful contravariant functor
$$\DD: \mathcal{MF}_{[0,p-2]}^{\nabla}(R)\to \Rep_{\Z_p}(\Gamma_R).$$ An
object lying in the image of the functor $\DD$ is called a
\emph{dual crystalline representation}\footnote{It is said to be
dual because the functor $\DD$ maps the first crystalline cohomology
of an abelian variety to the dual of the first \'{e}tale cohomology.
See Theorem 7 \cite{Fa3}.}. For our convenience, we shall also
consider the covariant functor $\DD^\vee$, which maps an object $H\in
\mathcal{MF}_{[0,p-2]}^{\nabla}(R)$ to the dual of $\DD(H)$ in
$\Rep_{\Z_p}(\Gamma_R)$, and call an object in the image of $\DD^\vee$
a crystalline representation. The $p$-torsion analogue of the above
theorem is established in \cite{Fa2}. For clarity of exposition, we
use the subscript tor to distinguish the torsion analogues. So there
is also a fully faithful functor (Theorem 2.6 loc. cit.)
$$
\DD_{tor}: \mathcal{MF}_{[0,p-2]}^{\nabla}(R)_{tor}\to
\Rep_{\Z_p}(\Gamma_R)_{tor}.
$$
It follows from the construction that for an object $H\in
 \mathcal{MF}_{[0,p-2]}^{\nabla}(R)$, one has
$\DD(H)=\lim_{\infty \leftarrow n}\DD_{tor}(\frac{H}{p^nH})$.
Faltings has defined an adjoint functor $\EE_{tor}$ of $\DD_{tor}$
(see Ch. II. f)-g) \cite{Fa2}). For an object $\L\in
\Rep_{\Z_p}(\Gamma_R)$, one defines
$$
\EE(\L):=[\lim_{\infty \leftarrow
n}\EE_{tor}(\frac{\L}{p^n\L})]/\textrm{torsion}.
$$
Clearly, for $\L=\DD(H)$, it holds that
$$
\EE(\L)=\lim_{\infty \leftarrow
n}\EE_{tor}(\frac{\L}{p^n\L})=\lim_{\infty \leftarrow
n}\frac{H}{p^nH}=H.$$ Finally define $\EE^\vee(\L):=\EE(\L^\vee)$.
\begin{lemma}\label{direct summan of dual crystalline sheaf}
Suppose $\W,\W_1,\W_2\in \Rep_{\Z_p}(\Gamma_R)$. The following
basic properties hold:
\begin{itemize}
    \item [(i)] Suppose $\W=\W_1\oplus \W_2$. Then $\W$ is crystalline if and only if
each $\W_i$ is so.
    \item [(ii)] Suppose $\W$ crystalline with Hodge-Tate weight $n$ and a Schur functor $\SSS_{\lambda}$ with $\lambda$ a partition of $m\leq
p-1$ satisfying $mn\leq p-2$. Then $\SSS_{\lambda}(\W)$ is still crystalline, and there is
a natural isomorphism $\EE^\vee(\SSS_{\lambda}\W)\cong\SSS_{\lambda}
\EE^\vee(\W)$.
    \item [(iii)] Suppose $\W_i, i=1,2$ crystalline with Hodge-Tate weights $n_i$ satisfying $n_1n_2\leq p-2$. Then $\W_1\otimes \W_2$ is crystalline, and there is a natural
isomorphism
$$\EE^\vee(\W_1\otimes \W_2)\cong\EE^\vee(\W_1)\otimes \EE^\vee(\W_2).$$
\end{itemize}
\end{lemma}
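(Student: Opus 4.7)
The plan is to proceed by transferring the problem between the two sides of Faltings's correspondence $\DD$ / $\EE^*$, using full faithfulness to move decompositions and morphisms, and compatibility with tensor products of the underlying category $MF^\nabla_{[0,p-2]}(R)$ (which carries a natural tensor structure, subject to staying within the Hodge--Tate window).

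For (iii) I would first check crystallinity. Given crystalline $\W_i = \DD^*(H_i)$ with $H_i \in MF^\nabla_{[0,p-2]}(R)$, form the tensor product $H_1 \otimes H_2$ in $MF^\nabla$ by tensoring modules, filtrations, connections and Frobenii in the standard way. Because the Hodge--Tate weights of the $H_i$ come from the $V$--tensor factor (weights $\{0,1\}$) and the $U$--factors (weight $0$), the tensor product still lies in $MF^\nabla_{[0,p-2]}(R)$ as long as $p$ is large enough, which is built into our hypotheses. The compatibility of $\DD_{tor}$ with tensor products (Theorem 2.6$^*$ together with the remarks in \cite{Fa2}) gives $\DD_{tor}(H_1 \otimes H_2 / p^n) = \DD_{tor}(H_1/p^n) \otimes \DD_{tor}(H_2/p^n)$; passing to the inverse limit and dualising yields $\DD^*(H_1 \otimes H_2) = \W_1 \otimes \W_2$. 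This proves $\W_1 \otimes \W_2$ is crystalline with $\EE^*(\W_1 \otimes \W_2) \cong \EE^*(\W_1) \otimes \EE^*(\W_2)$.

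For (i), if $\W = \W_1 \oplus \W_2$ is crystalline with both $\W_i$ crystalline, an idempotent in $\End_{\Gamma_R}(\W)$ pulls back through the full faithfulness of $\DD$ to an idempotent in $\End_{MF}(\EE^*(\W))$, yielding a corresponding direct sum decomposition in the category $MF^\nabla_{[0,p-2]}(R)$ and hence crystallinity of each summand; the converse is immediate since $\DD^*$ commutes with finite direct sums (tensor $H_1 \oplus H_2$ in $MF^\nabla$ with its obvious structures). For (ii), exploit that with $n \leq p-1$, the Young symmetrizer $c_\lambda \in \Z_{(p)}[S_n]$ defining $\SSS_\lambda$ is a (rescaled) idempotent with denominator prime to $p$, hence induces a genuine idempotent in $\End_{\Gamma_R}(\W^{\otimes n})$ after inverting suitable units. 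Combining (iii) iteratively gives $\W^{\otimes n}$ crystalline with $\EE^*(\W^{\otimes n}) \cong \EE^*(\W)^{\otimes n}$; applying (i) to the idempotent decomposition cut out by $c_\lambda$ extracts $\SSS_\lambda \W$ as a crystalline direct summand and identifies $\EE^*(\SSS_\lambda \W)$ with $\SSS_\lambda \EE^*(\W)$.

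The main technical obstacle is handling the torsion that enters the definition of $\EE$. Because $\EE(\L)$ is defined as the inverse limit of $\EE_{tor}(\L/p^n\L)$ modulo torsion, one must verify that tensor products and idempotent images commute with the quotient by torsion; this is the place where the freeness of the objects in $MF^\nabla_{[0,p-2]}(R)$ (so that their tensor products remain torsion free) and the invertibility of the Young-symmetrizer denominators in $\Z_p$ (forcing the direct summand to split off integrally) both play an essential role. Once these compatibilities are in hand, the arguments above go through and the three statements are established.
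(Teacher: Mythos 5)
Your proposal covers the same territory as the paper's proof, and parts (i) and (ii) are essentially fine, but the treatment of (iii) leans on a citation that does not carry the weight you give it.

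For (i), you lift the projection idempotent through the full faithfulness of $\DD$ and split $\EE^*(\W)$ on the filtered-module side. The paper instead compares lengths: one always has the inequality of lengths of $\EE_{tor}(\W_i/p^n)$ against $\W_i/p^n$, with equality exactly when $\W_i/p^n$ is in the image of $\DD_{tor}$, and crystallinity of $\W$ forces equality on both summands simultaneously. Both routes succeed; your idempotent route is arguably cleaner, and the only point you should make explicit is that a direct summand of an object of $MF^{\nabla}_{[0,p-2]}(R)$ is again such an object (its filtration gradings are direct summands of those of the ambient $H$, hence torsion free). Your treatment of (ii) — iterate (iii) to get $\W^{\otimes n}$ crystalline, then extract $\SSS_{\lambda}\W$ as the image of the Young symmetrizer, using $n\leq p-1$ so the symmetrizer is an integral idempotent, and finally apply (i) — is exactly the argument the paper compresses into ``clearly (ii) follows from (iii).''

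In (iii), however, you claim that Faltings' Theorem 2.6* together with its remark directly yields $\DD_{tor}(H_1\otimes H_2/p^n)\cong\DD_{tor}(H_1/p^n)\otimes\DD_{tor}(H_2/p^n)$. Theorem 2.6* is a compatibility statement for direct images along proper morphisms, and the remark the paper cites from it concerns only the trivial object $\DD(\sO_X/p^n,d)=\Z/p^n$; neither asserts tensor compatibility of $\DD_{tor}$. The paper supplies the missing content: it builds the natural map $\DD(H_1)\otimes\DD(H_2)\to\DD(H_1\otimes H_2)$ by multiplying the two maps into $B^+(R)$, notes it is injective between $\Z_p$-modules of the same rank, and then establishes that the cokernel has no $p$-torsion by reducing mod $p$ and running the same length comparison as in (i). You gesture at the need to control torsion in your closing paragraph but do not actually carry out this verification; without it, your (iii), and hence your (ii), is incomplete as written.
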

\begin{proof}
Consider $\EE_{tor}(\W/p^n)=\EE_{tor}(\W_1/p^n)\oplus
\EE_{tor}(\W_2/p^n)$. By Ch. II g) \cite{Fa2}, one has always that
$l(\EE_{tor}(\W_i/p^n))\leq l(\W_i/p^n)$, and the equality holds
iff $\W_i/p^n$ lies in the image of $\DD_{tor}$. Now assume $\W$
to be dual crystalline, that is $\W=\DD(H)$. So
$\frac{\W}{p^n\W}=\frac{\DD(H)}{\DD(p^nH)}=\DD_{tor}(\frac{H}{p^nH})$.
Hence from
$$
l(\W/p^n)=\sum_i l(\W_i/p^n)\geq
\sum_il(\EE_{tor}(\W_i/p^n))=l(\EE_{tor}(\W/p^n)),
$$
it follows that there are $H_{i,n}\in
\mathcal{MF}_{[0,p-2]}^{\nabla}(R)_{tor}, i=1,2$ such that
$\DD_{tor}(H_{i,n})=\W_i/p^n$ and by the faithfulness of
$\DD_{tor}$, $H_{1,n}\oplus H_{2,n}=H/p^n$. Taking the inverse
limit, one obtains $H_i=\lim_{\infty \leftarrow n}H_{i,n}$ with the
equality $H_1\oplus H_2=H$, which implies that $H_i$ is torsion free
and is an object in $\mathcal{MF}_{[0,p-2]}^{\nabla}(R)$. Thus it
follows that
$$
\DD(H_i)=\lim_{\infty \leftarrow n}\DD_{tor}(H_i/p^n)=\lim_{\infty
\leftarrow n}\W_i/p^n=\W_i,
$$
and thereby $\W_i$ is dual crystalline. The other direction of (i)
is obvious. Clearly (ii) follows from (iii). To show (iii), it is to
show that for $H_i\in \mathcal{MF}_{[0,p-2]}^{\nabla}(R),i=1,2$,
there is a natural isomorphism $ \DD(H_1)\otimes
\DD(H_2)\cong\DD(H_1\otimes H_2)
 $.
Taking an element $f_i\in \DD(H_i)$, which is an $\hat R$-linear
map from $H_i$ to $B^+(R)$ respecting the filtrations and the
$\phi$s, one forms the $\hat R$-linear map $f_1\otimes f_2:
H_1\otimes H_2\to B^+(R)$. It respects the filtrations and the
$\phi$s and therefore gives an element in $\DD(H_1\otimes H_2)$.
So one has a natural map $\DD(H_1)\otimes \DD(H_2)\to
\DD(H_1\otimes H_2)$, which is obviously injective. Because both
sides have the same $\Z_p$-rank, it remains to show that the
quotient $\DD(H_1\otimes H_2)/\DD(H_1)\otimes\DD(H_2)$ has no
torsion. For that we pass to modulo $p$ reduction and use the
functor $\DD_{tor}$. The same argument as above applied to $H_i/p$
shows that the $\F_p$-linear map $\DD_{tor}(H_1/p)\otimes
\DD_{tor}(H_2/p)\to \DD_{tor}(H_1\otimes H_2/p)$ is injective and
therefore is bijective. This shows the non $p$-torsioness.
\end{proof}
Let $\sU=\{U\}$ be a small affine open covering of $M$. Theorem 2.3
\cite{Fa2} shows that one can define the global category
$\mathcal{MF}^{\nabla}_{[0,p-2]}(M)$. Furthermore Faltings explained
that these various local functors $\DD_{tor}$ glue to a global one
from $\mathcal{MF}^{\nabla}_{[0,p-2]}(M)_{tor}$ to
$\Rep_{\Z_p}(\pi_1(M^0))_{tor}$ (see page 42 \cite{Fa2}). By passing
to limit, one obtains a global functor $\DD:
\mathcal{MF}^{\nabla}_{[0,p-2]}(M)\to \Rep_{\Z_p}(\pi_1(M^0))$. An
object in the image of $\DD$ is called a \emph{dual crystalline
sheaf}. Similarly, one defines $\DD^\vee$ and $\EE^\vee$ in the global
setting and calls an object in the image of $\DD^\vee$ a crystalline
sheaf. Now let $\W$ be a crystalline sheaf of $M^0$ and $H$ the
corresponding filtered Frobenius crystal to $\W$ (i.e.
$\DD^\vee(H)=\W$). Let $x$ be a $W(k)$-valued point of $M$. Consider
the specialization of both objects into the point $x$: Via the
splitting of the short exact sequence
$$
1\to \pi_1(\bar M^0) \to \pi_1(M^0)\to \Gal_{{\rm Frac}(W(k))}\to
1
$$
induced by the point $x^0: {\rm Frac}(W(k))\to M^0$, $\W_{x^0}$ is a
representation of $\Gal_{{\rm Frac}(W(k))}$. On the other hand,
$H_x$ is obviously an object in $\mathcal{MF}_{[0,p-2]}(W(k))$.
\begin{lemma}\label{global to local}
Notation as above. Then the following statements hold:
\begin{itemize}
    \item [(i)] The Galois representation $\W_{x^0}\otimes
\Q_p$ is crystalline in the sense of Fontaine.
    \item [(ii)] $H_{x}$ is naturally a strong divisible
lattice of the ${\rm Frac}(W(k))$-vector space
$D_{crys}(\W_{x^0}\otimes \Q_p)$ in the sense of
Fontaine-Laffaille (\cite{FL}).
    \item [(iii)] There is a natural isomorphism of $\Z_p[\Gal_{{\rm
Frac}(W(k))}]$-modules:
$$
\DD^\vee(H_{x})\cong\DD^\vee(H)_{x^0}.
$$
Consequently, there is a natural isomorphism in
$\mathcal{MF}_{[0,p-2]}(W(k))$:
$$
\EE^\vee(\W)_{x}\cong\EE^\vee(\W_{x^0}).
$$
\end{itemize}
\end{lemma}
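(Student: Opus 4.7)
The plan is to reduce the three statements to Fontaine-Laffaille theory \cite{FL} via the compatibility of Faltings' global functor $\DD$ with specialization at the $W(k)$-point $x$. I would first establish the comparison isomorphism in (iii); the statements (i) and (ii) then follow readily.

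For (iii), I would work modulo $p^n$ and then pass to the inverse limit. Choose a small affine open $U=\Spec R\subset M$ through which $x$ factors, together with a Frobenius lift $\phi_U$ on $R$; the induced $W(k)$-algebra map $x^*:R\to W(k)$ then yields a map of period rings $B^+(R)\to B^+_{crys}$ compatible with filtrations, Frobenii and Galois actions, the latter via the inclusion $\Gal_{{\rm Frac}(W(k))}\hookrightarrow \Gamma_R$ afforded by the lift $x^0:{\rm Frac}(W(k))\to M^0$. Applied to the local realization of $\DD_{tor}(H/p^n)$ as $\hat R$-linear maps $H/p^n\to B^+(R)/p^n$ respecting filtrations and $\phi$s (as used in the proof of Lemma \ref{direct summan of dual crystalline sheaf}), this comparison produces a natural map of finite $\Z/p^n[\Gal_{{\rm Frac}(W(k))}]$-modules
$$
\DD_{tor}(H/p^n)_{x^0}\longrightarrow \DD_{tor}(H_x/p^n).
$$
Both sides have the same $\Z/p^n$-length by the main theorem of \cite{FL} combined with Theorem 2.6 of \cite{Fa2}, so the map is bijective. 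Passing to the limit produces (iii).

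Statement (i) is then immediate, since $\DD^*(H_x)$ is by definition a Fontaine-Laffaille representation and hence crystalline in Fontaine's sense. For (ii), the pullback $H_x$ of $H\in MF^{\nabla}_{[0,p-2]}(M)$ is automatically a free $W(k)$-module endowed with a filtration with torsion-free quotients and a strongly divisible $\phi$, while the connection $\nabla$ disappears upon specialization to a point; combined with (i) this exhibits $H_x$ as the strongly divisible lattice inside $D_{crys}(\W_{x^0}\otimes\Q_p)$. The consequence $\EE^*(\W)_x\cong \EE^*(\W_{x^0})$ follows by applying the quasi-inverse $\EE^*$ of $\DD^*$ to both sides of the identification in (iii).

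The main technical obstacle will be the verification of the local-global compatibility of Faltings' functor with specialization at $x$. Faltings' construction glues the local functors $\{\DD^U_{tor}\}$ by using the integrable connection $\nabla$ to absorb the dependence on the chosen Frobenius lift on overlaps; one must check that this cocycle data trivialises upon pullback by $x^*$, where no further choice of Frobenius is to be made, so that the resulting $\Gal_{{\rm Frac}(W(k))}$-representation agrees on the nose with the output of the Fontaine-Laffaille functor applied to $H_x$. This is essentially a compatibility at the level of period rings, implicit in \cite{Fa2} but not spelled out in the strongly-divisible-lattice language of \cite{FL}, and is the one step requiring real care.
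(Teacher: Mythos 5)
Your framework is close to the paper's: localize to a small affine $U=\Spec R$ through which $x$ factors, use the period-ring morphism $B^+(\hat R)\to B^+(W(k))$ induced by $\bar x$ to build an evaluation map on Faltings modules, and feed the result into Fontaine-Laffaille theory at the point. But the central step as written has a real gap.

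You assert that the natural map $\DD_{tor}(H/p^n)_{x^0}\to\DD_{tor}(H_x/p^n)$ is bijective ``because both sides have the same $\Z/p^n$-length.'' Equal length of source and target does not by itself make a given map bijective; you must first establish injectivity (or surjectivity), and nothing in your sketch does this. Indeed $B^+(\hat R)/p^n\to B^+(W(k))/p^n$ is a surjection with large kernel, so a priori a nonzero $f\in\DD_{tor}(H/p^n)$ could specialize to $f_x=0$. The paper closes exactly this gap, in two stages. First it inverts $p$ and invokes Faltings' crystalline comparison $H\otimes_{\hat R}B(\hat R)\cong\DD^*(H)\otimes_{\Z_p}B(\hat R)$; tensoring along $\bar x$ with $B(W(k))$ and taking $\Gamma_{R,x}$-invariants yields $\DD^*(H)_{x^0}\otimes\Q_p\cong V_{crys}\bigl(H_x\otimes{\rm Frac}(W(k))\bigr)$, which gives (i), then (ii) by Fontaine-Laffaille, and in particular shows $ev_x\otimes\Q_p$ is an isomorphism, hence $ev_x$ is injective. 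Second, for the integral statement (iii), the paper uses the retraction $B^+(W(k))\to B^+(\hat R)\to B^+(W(k))={\rm id}$ (coming from the $W(k)$-algebra structure of $R$) to prove $ev_x(\DD(H))\cap p\,\DD(H_x)=ev_x\bigl(p\,\DD(H)\bigr)$, hence $ev_x\otimes\F_p$ is injective; equal $\F_p$-dimension then makes it bijective, and so $ev_x$ is an isomorphism of lattices. Your proposal supplies neither the rational-comparison input for (i)/(ii) nor the retraction argument that gives injectivity mod $p$, and that is precisely the step you flag at the end as requiring ``real care'' but leave unaddressed.

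A secondary remark: you plan to derive (i) and (ii) from (iii), whereas in the paper's argument (i) and (ii) are extracted mid-proof from the rational comparison and are then \emph{used} (via Fontaine-Laffaille) to complete the integral statement (iii). The student's ordering is coherent in principle, but it puts all the weight on the torsion-level bijectivity claim, which is exactly where the argument breaks down.
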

\begin{proof}
It is clear that we can pass the problem to a small affine subset
$U=\Spec R\subset M$. Choose a local coordinate $T$ of $R$ such
that the $W(k)$-point $x$ of $R$ is given by $T=1$ (i.e. the
composite $W(k)[T^{\pm}]\to R\to W(k)$ is the $W(k)$-morphism
determined by $T\mapsto 1$). Fix an isomorphism $\bar
R\otimes_RW(k)\cong\overline{W(k)}$ such that the following
diagram commutes:
$$
\xymatrix{
  \Spec \overline{W(k)} \ar[d]_{} \ar[r]^{\bar x} & \Spec \bar R \ar[d]^{} \\
  \Spec W(k) \ar[r]^{x} & \Spec R.   }
$$
Note that the subgroup $\Gamma_{R,x}\subset \Gamma_R$ preserving
the prime ideal $\ker(\bar R\to \overline{W(k)})$ of $\bar x$ is
naturally isomorphic to $\Gal_{{\rm Frac}(W(k))}$, and it is equal
to the image of the splitting $\Gamma_R\twoheadrightarrow
\Gal_{{\rm Frac}(W(k))}$ induced by the $W(k)$-point $x$. Fix a
Frobenius lifting $\phi$ of $\hat R$ which fixes the point $x$
(e.g the one determined by $T\mapsto T^p$). Note also that the
point $\bar x: \bar R\to \overline{W(k)}$ induces a surjection of
$B^+(W(k))$-algebras $B^+(\hat R)\to B^+(W(k))$, which preserves
the filtration and the Frobenius. Recall that
$$
\DD(H)=\Hom_{\hat R,Fil,\phi}(H,B^+(\hat R))=(H^\vee\otimes_{\hat
R}B^+(\hat R))^{Fil=0,\phi=1},
$$ and
$$
\DD(H_{x})=\Hom_{W(k),Fil,\phi}(H_x,B^+(W(k)))=(H_x^\vee\otimes_{W(k)}B^+(W(k)))^{Fil=0,\phi=1}.
$$
The above free $\Z_p$-modules (say of rank $n$) are basically
obtained by solving certain equations (see page 127-128 \cite{Fa3}
and page 37-38 \cite{Fa2}). There are also natural surjections
$$
B^+(\hat R)\twoheadrightarrow B^+(\hat R)/p\cdot B^+(\hat
R)\twoheadrightarrow \bar R/p\cdot\bar R,
$$
and similarly for $B^+(W(k))$. These make the following diagrams
commute:
$$
\xymatrix{
  B^+(\hat R) \ar[d]_{ } \ar[r]^{ } & B^+(\hat R)/p\cdot B^+(\hat
R) \ar[d]_{ } \ar[r]^{ } & \bar R/p\cdot\bar R \ar[d]^{} \\
  B^+(W(k)) \ar[r]^{} &B^+(W(k))/p\cdot B^+(W(k))  \ar[r]^{} &
\overline{W(k)}/p\cdot\overline{W(k)}, }
$$
where the vertical arrows are induced by the point $\bar x$.
Faltings showed loc. cit. that it suffices to solve the equations
over the quotient $\bar R/p$ (resp. $\overline{ W(k)}/p$) because
each solution over the quotient can be uniquely lifted. Now choose
a filtered basis $\{h_{i}\}$ of $H$, which restricts to a filtered
basis of $H_x$. An element of $\DD(H)$ is then given by an
$n$-tuple in $B^{+}(\hat R)$ satisfying a system of equations
coming from the condition on $\phi$s. For each such an $n$-tuple,
we obtain an $n$-tuple in $B^+(W(k))$ by projecting each component
to $B^+(W(k))$ (the projection $B^+(\hat R)\twoheadrightarrow
B^+(W(k))$ induced by the point $\bar x$). As the projection
preserves the filtration and the Frobenius, and as the filtration
and the Frobenius on $H_x$ are the one of $H$ by restriction, any
so-obtained $n$-tuple satisfies the equations required for
$\DD(H_x)$. So we have a $\Z_p$-linear map
$$
ev_x: \DD(H)\to \DD(H_x), f\mapsto f(x).
$$
Consider first the Galois action. Recall that $\Gal_{{\rm
Frac}(W(k))}$ acts on $\DD(H_x)\subset H_x^\vee\otimes_{W(k)}B^+(W(k))$
on the second tensor factor. But the $\Gamma_R$-action on
$\DD(H)\subset H^\vee\otimes_{\hat R}B^+(\hat R)$ must be also
intertwined with the connection $\nabla$ on the first factor.
However the restriction to the subgroup $\Gamma_{R,x}$ does not
involve the connection (see Ch. II e) \cite{Fa2} for the $p$-torsion
situation which we can also assume in the argument). So the above
map $ev_x$ is equivariant with respect to $\Gamma_{R,x}$-action on
$\DD(H)$ and $\Gal_{{\rm Frac(W(k))}}$ action on $\DD(H_x)$.\\
Next we claim that $ev_x$ is a $\Z_p$-isomorphism. For that we
consider the base change $ev_x\otimes \Q_p$ and then the reduction
$ev_x\otimes \F_p$. By Ch. II h) \cite{Fa2}, one has a natural
isomorphism
$$
H\otimes_{\hat R}B(\hat R)\cong\DD^\vee(H)\otimes_{\Z_p}B(\hat R),
$$
which respects the $\Gamma_R$-actions, filtrations and $\phi$s.
Tensorizing the above isomorphism with $B(W(k))$ as $B(\hat
R)$-modules (the morphism $B(\hat R)\twoheadrightarrow B(W(k))$
induced by $\bar x$) and taking the $\Gamma_{R,x}$-invariance of
both sides, we obtain an isomorphism of $\Gal_{{\rm
Frac}(W(k))}$-representations:
$$
V_{crys}(H_x\otimes {\rm Frac}(W(k)))\cong\DD^\vee(H)_{x^0}\otimes
\Q_p.
$$
That is, there is a natural isomorphism $ \DD(H)_{x^0}\otimes
\Q_p\cong V_{crys}^\vee(H_x\otimes {\rm Frac}(W(k)))
 $.
By Fontaine-Laffaille (see \S7-8 in \cite{FL}, see also \S2
\cite{Br}), $\DD(H_x)$ is a Galois lattice of
$V_{crys}^\vee(H_x\otimes {\rm Frac}(W(k)))$ by the isomorphism, and
$H_x$ is a strong divisible lattice of
$D_{crys}(\DD^\vee(H)_{x^0}\otimes \Q_p)$. This shows (i) and (ii).
Also it implies that the map $ev_x\otimes \Q_p$ is an isomorphism.
In particular the map $ev_x$ is injective. Using the fact that the
composite $B^+(W(k))\to B^+(\hat R)\to B^+(W(k))$ is the identity,
one sees that $ev_x(\DD(H))\cap p\DD(H_x)=ev_x(p\DD(H))$, and the
map $ev_x\otimes \F_p: \DD(H)/p\DD(H)\to \DD(H_x)/p\DD(H_x)$ is
therefore injective. Now that the $\F_p$-vector spaces
$\DD(H)/p\DD(H)$ and $\DD(H_x)/p\DD(H_x)$ have the same dimension
$n$, $ev_x\otimes \F_p$ is an isomorphism. Thus $ev_x$ is an
isomorphism. This proves (iii).
\end{proof}
Let $r\in \N$ be a natural number. Let
$\Rep_{\Z_{p^r}}(\pi_1(M^0))\subset \Rep_{\Z_p}(\pi_1(M^0))$ be the
full subcategory of $\Z_{p^r}[\pi_1(M^0)]$-modules. An object which
lies in both $\Rep_{\Z_{p^r}}(\pi_1(M^0))$ and the image of $\DD^\vee$
is called a $\Z_{p^r}$-crystalline sheaf. One notes that the proof
of Theorem 2.3 \cite{Fa2} works verbatim to show that the local
categories $\{\mathcal{MF}_{big,r}^{\nabla}(U)\}_{U\in \sU}$ (see
\S\ref{Drinfel'd module and versal deformation subsection}) glue
into a global category $\mathcal{MF}_{big,r}^{\nabla}(M)$. A typical
object in this category is obtained by replacing the Frobenius of an
object in $\mathcal{MF}_{[0,p-2]}^{\nabla}(M)$ with its $r$-th
power.
\begin{lemma}\label{eigendecomposition of families of filtered
crystals} Let $\W$ be a $\Z_{p^r}$-crystalline sheaf. Assume that
$\Z_{p^r}\subset \sO_M$. Then there is a natural decomposition $
 \EE^\vee(\W)=\oplus^{r-1}_{i=0}\EE^\vee(\W)_i
 $ in
the category $\mathcal{MF}_{big, r}^{\nabla}(M)$.
\end{lemma}
\begin{proof}
The multiplication by $s\in \Z_{p^r}$ on $\W$ commutes with
$\pi_1(M^0)$-action. Hence it gives rise to an endomorphism
$s_{\mathcal{MF}}$ of $\EE^\vee(\W)$ in the category
$\mathcal{MF}_{[0,p-2]}^{\nabla}(M)$. By assumption $\sO_M$ contains
the eigenvalues of $s_{\mathcal{MF}}$. The eigen decomposition of
$\EE^\vee(\W)$ with respect to $s_{\mathcal{MF}}$ gives rise to a
decomposition of form $\oplus^{r-1}_{i=0}\EE^\vee(\W)_i$ such that the
direct factors are preserved by $\nabla$ and permutes cyclically by
$\phi$. Hence the lemma follows.
\end{proof}
\subsection{Second wedge/symmetric power of the universal filtered Dieudonn\'{e} module}
From now on the rational prime number $p$ is assumed to be $\geq
5$ in addition to Assumption \ref{assumption on p from integral
model}. The aim of the paragraph is to show a direct sum
decomposition of the second wedge (resp. symmetric) of the
universal filtered Dieudonn\'{e} crystal for $d$ an even (resp.
odd) number. Recall that by Corollary \ref{tensor decompositon of
local system} we have a direct-tensor decomposition of \'{e}tale
local systems $ \HH=(\V\otimes \U)^{\oplus 2^{\epsilon(D)}}
 $.
It follows from Lemma \ref{direct summan of dual crystalline
sheaf} that the direct summand $\HH':=\V\otimes \U$ of $\HH$ is
crystalline. As $\det(\HH)\cong\Z_p(-2^{d+\epsilon(D)-1})$, it
follows that
$$
\det\HH'\cong\Z_p(-2^{d-1})\otimes \chi,
$$
where $\chi$ is a 2-torsion crystalline sheaf (which is trivial when $\epsilon(D)=0$). Consider the following $\Z_{p^d}$-\'{e}tale local system:
$$
\tilde{\HH}':=(\V\otimes\det(\V)^{-\frac{1}{2}}\otimes
\U\otimes\det(\U)^{-\frac{1}{2}})\otimes_{\Z_p}\Z_{p^d}.
$$
Because of the equality
$$
\bigwedge^2(\tilde{\HH}')=[\bigwedge^2(\HH')\otimes\det(\HH')^{-1}]\otimes_{\Z_p} \Z_{p^d},
$$
$\bigwedge^2(\tilde{\HH}')$ is a $\Z_{p^d}$-crystalline sheaf. For $1\leq i\leq r$, put
$$
\tilde{\V}_i=\V_{1,\sigma^{i-1}}\otimes
\det(\V_1)^{-\frac{1}{2}},\
\tilde{\V}'_i=\tilde{\V}_i\otimes_{\Z_{p^r}}\Z_{p^d},
$$
and for $r+1=r_1+1\leq i\leq r_1+r_2$, put
$$
\tilde{\V}'_i=(\U_{1,\sigma^{i-1}}\otimes
\det(\U_1)^{-\frac{1}{2}})\otimes_{\Z_{p^{r_2}}}\Z_{p^d},
$$
and so on. Then by Corollary \ref{tensor decompositon of local system},
we have a decomposition of $\tilde{\HH}'$ into tensor product of
rank two $\Z_{p^d}$-\'{e}tale local systems:
 $
\tilde{\HH}'=\otimes_{i=1}^{d}\tilde{\V}'_i
 $.
In the tensor decomposition, we assume that the factor
$\tilde{\V}_1$ corresponds to the place $\tau$ (see Lemma
\ref{tensor decomposition of corestriction algebra over local
fields}).
\begin{remark}\label{each tesnor factor is dual crystalline sheaf}
We conjecture that each tensor factor $\tilde{\V}'_i$ in the above
decomposition is a $\Z_{p^d}$-crystalline sheaf. The next lemma
shows that $\Sym^2\tilde{\V}'_i$ is a direct factor of a
crystalline sheaf and therefore crystalline by Lemma \ref{direct
summan of dual crystalline sheaf} (i).
\end{remark}
The following lemma is proved by induction on $d$:
\begin{lemma}\label{schur functor}
For $I=(i_1,\cdots,i_l)$ a multi-index in $\{1,\cdots,d\}$, put
 $
\Sym^2(\tilde{\V}')_I:=\otimes_{j=1}^{l}\Sym^2\tilde{\V}'_{i_j}
 $.
One has a direct sum decomposition of $\Z_{p^d}$-\'{e}tale local
systems:
\begin{itemize}
    \item [(i)] for $d$ even,
    $$
    \bigwedge^2(\tilde{\HH}')=\bigoplus_{I,|I|\
    \rm{odd}}\Sym^2(\tilde{\V}')_I,\quad  \Sym^2(\tilde{\HH}')=\bigoplus_{I,|I|\
    \rm{even}}\Sym^2(\tilde{\V}')_I,
    $$
    \item [(ii)] for $d$ odd,
    $$
    \bigwedge^2(\tilde{\HH}')=\bigoplus_{I,|I|\
    \rm{even}}\Sym^2(\tilde{\V}')_I,\quad  \Sym^2(\tilde{\HH}')=\bigoplus_{I,|I|\
    \rm{odd}}\Sym^2(\tilde{\V}')_I.
    $$
\end{itemize}
\end{lemma}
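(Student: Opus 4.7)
The plan is to realize both decompositions as the $\pm 1$-eigenspace decomposition of the involution $\tau$ on $\tilde{\HH}'\otimes\tilde{\HH}'$ swapping the two tensor factors, and to compute how $\tau$ acts on each natural summand of $\tilde{\HH}'\otimes\tilde{\HH}'$ coming from the tensor decomposition $\tilde{\HH}'=\otimes_{i=1}^{d}\tilde{\V}'_i$. The main preliminary point is that, by construction, each rank-two factor $\tilde{\V}'_i$ has \emph{trivial determinant}: the twist $\V_{1,\sigma^{i-1}}\otimes\det(\V_1)^{-\frac{1}{2}}$ (and its analogue for the $\U$-factors) is exactly chosen to kill the determinant, using $p\geq 5$ so that the square root makes sense. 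Consequently, for each $i$ there is a canonical $S_2$-equivariant splitting
\[
\tilde{\V}'_i\otimes\tilde{\V}'_i \;\cong\; \Sym^2\tilde{\V}'_i \;\oplus\; \wedge^2\tilde{\V}'_i \;\cong\; \Sym^2\tilde{\V}'_i \;\oplus\; \mathbf{1},
\]
where $\mathbf{1}$ denotes the trivial rank-one $\Z_{p^d}$-local system, the swap $\tau_i$ acts as $+1$ on $\Sym^2\tilde{\V}'_i$ and as $-1$ on the trivial summand.

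Next I would take the $d$-fold tensor product and distribute:
\[
\tilde{\HH}'\otimes\tilde{\HH}' \;\cong\; \bigotimes_{i=1}^{d}\bigl(\Sym^2\tilde{\V}'_i\oplus\mathbf{1}\bigr) \;\cong\; \bigoplus_{I\subseteq\{1,\dots,d\}} \Sym^2(\tilde{\V}')_I,
\]
where $I$ records the indices at which one picks the $\Sym^2$-summand. The global swap $\tau$ on $\tilde{\HH}'\otimes\tilde{\HH}'$ factors as $\tau=\bigotimes_{i=1}^{d}\tau_i$, so on the summand indexed by $I$ it acts by the scalar $(+1)^{|I|}\cdot(-1)^{d-|I|}=(-1)^{d-|I|}$.

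By the standard identification $\Sym^2\tilde{\HH}'=(\tilde{\HH}'\otimes\tilde{\HH}')^{\tau=+1}$ and $\wedge^2\tilde{\HH}'=(\tilde{\HH}'\otimes\tilde{\HH}')^{\tau=-1}$ (valid since $p\geq 5$ so $2$ is invertible), the summand $\Sym^2(\tilde{\V}')_I$ lies in $\Sym^2\tilde{\HH}'$ precisely when $d-|I|$ is even and in $\wedge^2\tilde{\HH}'$ precisely when $d-|I|$ is odd. Reading this off according to the parity of $d$ yields exactly the two cases (i) and (ii) of the statement.

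The only genuine obstacle is Step~1, namely justifying that the various twists by $\det^{-1/2}$ make sense and produce honestly trivialized determinants for each $\tilde{\V}'_i$. This rests on the fact that the determinants of the factors in the tensor decomposition of Corollary \ref{tensor decompositon of local system} are Tate twists (hence admit square roots when $p$ is odd), combined with the polarizability of $\HH$ which forces the global determinant twist $\det(\HH')\cong\Z_p(-2^{d-1})$ used in the definition of $\tilde{\HH}'$. Once this bookkeeping is in place, the rest of the argument is the purely formal $S_2$-equivariant computation sketched above.
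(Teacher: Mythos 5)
Your proof is correct and takes a genuinely different route from the paper. The paper states that the lemma ``is proved by induction on $d$'' and omits the argument; your version repackages the same combinatorics as the $\pm 1$ eigenspace decomposition of the swap involution $\tau$ on $\tilde{\HH}'\otimes\tilde{\HH}'$, which handles both parities of $d$ uniformly in a single sign computation $(-1)^{d-|I|}$. The key inputs are the same in either formulation: the factorwise Clebsch--Gordan splitting $\tilde{\V}'_i\otimes\tilde{\V}'_i\cong\Sym^2\tilde{\V}'_i\oplus\wedge^2\tilde{\V}'_i$ together with the triviality of each $\wedge^2\tilde{\V}'_i\cong\det\tilde{\V}'_i$, the latter being precisely what the twists by $\det(\V_1)^{-1/2}$, $\det(\U_1)^{-1/2}$, etc., were designed to arrange. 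A rank check confirms the bookkeeping: since $\sum_{k\ \mathrm{even}}\binom{d}{k}3^k=(4^d+(-2)^d)/2$ and $\sum_{k\ \mathrm{odd}}\binom{d}{k}3^k=(4^d-(-2)^d)/2$, the summands over $|I|\equiv d\pmod 2$ total $2^{d-1}(2^d+1)=\operatorname{rank}\Sym^2\tilde{\HH}'$ and the rest total $2^{d-1}(2^d-1)=\operatorname{rank}\wedge^2\tilde{\HH}'$, exactly as your eigenvalue computation predicts. Two small remarks: what you actually use to split $\Sym^2\oplus\wedge^2$ is only that $2$ is invertible, so $p$ odd suffices for this particular lemma (the hypothesis $p\geq 5$ is imposed in the paper for the Fontaine--Laffaille range $[0,p-2]$ to contain weight $2$, not for the square roots per se); and the summand for $I=\emptyset$, namely $\bigotimes_i\wedge^2\tilde{\V}'_i\cong\mathbf{1}$, deserves to be checked separately, which your formula does handle since $(-1)^{d-0}=(-1)^d$. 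Overall your approach is cleaner than induction and would be a reasonable alternative proof to include.
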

In the following we shall focus on the direct summand
$\Sym^2(\tilde{\V}_1)$ in the decomposition since it is so to
speak the (rank three) uniformizing direct factor of the weight
two integral $p$-adic variation of Hodge structures of the
universal family. Also one notices that this factor is actually
defined over $\Z_{p^r}$. So by taking the
$\Gal(\Z_{p^d}|\Z_{p^r})$-invariants, one obtains a direct
decomposition into $\Z_{p^r}$-dual crystalline sheaves with
$\Sym^2\tilde{\V}_1$ as a direct factor for the second wedge
(resp. symmetric) power for $n$ even (resp. odd).
\begin{proposition}\label{direct sum decomposition of second/wedge
power} Let $(H',F,\phi,\nabla)\in \mathcal{MF}^{\nabla}_{[0,1]}(M)$
be the sub filtered $F$-crystal corresponding to the factor
$\HH'\subseteq \HH$. One has a direct sum decomposition in
$\mathcal{MF}^{\nabla}_{big,r}(M)$:
\begin{itemize}
    \item [(i)] for $d$ even,
    $$
    \bigwedge^2(H',F,\phi^r,\nabla)=\bigoplus_{i=1}^{r}\EE^\vee(\Sym^2\tilde{\V}_i)_{0}\{
    -2^{d-1}\}\otimes \EE^{\vee}(\chi) \oplus \textrm{rest term}.
$$
    \item [(ii)] for $d$ odd,
    $$
    \Sym^2(H',F,\phi^r,\nabla)=
\bigoplus_{i=1}^{r}\EE^\vee(\Sym^2\tilde{\V}_i)_{0}\{
    -2^{d-1}\}\otimes  \EE^{\vee}(\chi) \oplus \textrm{rest term}.
$$
\end{itemize}
\end{proposition}
\begin{proof}
We shall prove (i) only because (ii) can be similarly proved. By the
discussion before the proposition, we obtain a decomposition in
$\mathcal{MF}^{\nabla}_{[0,2]}(M)$:
$$
\EE^\vee(\bigwedge^2(\HH')\otimes\chi^{-1}\otimes\Z_{p^r})=
\bigoplus_{i=1}^{r}\EE^\vee(\Sym^2\tilde{\V}_i)\{-2^{d-1}\} \oplus \textrm{rest
term}.
$$
The claimed decomposition is obtained by considering the eigen
decomposition of both sides corresponding the eigenvalue $s_0$:
The argument is similar to that of Lemma \ref{lattic structure of
fontaine functors}. The right hand side is clear, and the question
is the left hand side. It suffices to consider the eigen component
after inverting $p$. By Ch. II h) \cite{Fa2}, one has a
$\Gamma_R$-isomorphism
$$
\EE^\vee(\bigwedge^2(\HH')\otimes\chi^{-1}\otimes\Z_{p^r}|_{\hat
U})\otimes_{\hat R}B(R)\cong\bigwedge^2(\HH')\otimes \chi^{-1}\otimes
\Z_{p^r}|_{\hat U})\otimes_{\Z_p}B(R).
$$
It follows that
\begin{eqnarray*}
   [\EE^\vee(\bigwedge^2(\HH')\otimes \chi^{-1}\otimes
\Z_{p^r})|_{\hat U}]_0[\frac{1}{p}]&\cong&
[\bigwedge^2(\HH')\otimes_{\Z_p}\chi^{-1}\otimes_{\Z_p}
\Z_{p^r}|_{\hat U}\otimes_{\Z_{p^r}}B(R)]^{\Gamma_R} \\
   &\cong& [\bigwedge^2\HH'\otimes_{\Z_p} \chi^{-1}\otimes_{\Z_p}
 B(R)]^{\Gamma_R} \\
   &\cong& \EE^\vee(\bigwedge^2\HH')\otimes \EE^{\vee}(\chi^{-1})[\frac{1}{p}]\\
   &\cong&  \bigwedge^2(\EE^\vee(\HH'))\otimes \EE^{\vee}(\chi^{-1})[\frac{1}{p}].
\end{eqnarray*}
This shows that the eigen submodule of
$\EE^\vee(\bigwedge^2\HH'\otimes\chi^{-1}\otimes\Z_{p^r})$ to the
eigenvalue $s_0$ is naturally isomorphic to $\bigwedge^2H'\otimes \EE^{\vee}(\chi^{-1})$. The claimed decomposition then follows.
\end{proof}
\subsection{Construction of the pair}\label{subsection on weak Hasse-Witt
pair} The aim of the paragraph is to construct the pair
$(\sP_0,\tilde F_{rel})$ claimed in the introduction of the section.
In the following $\EE_0$ denotes for
$\EE^\vee(\Sym^2\tilde{\V}_1)_{0}\{-2^{d-1}\}$ and $\tilde{\EE}_0=\EE_0\otimes  \EE^{\vee}(\chi)$ for the factor in the decomposition in Proposition \ref{direct sum decomposition of second/wedge power}. Note that the square of $\EE^{\vee}(\chi)$ is the trivial crystal. In particular, its filtration is trivial and its restriction to each $W(k)$-valued point is a unit crystal. Let $x_0\in M_0(k)$ be a $k$-rational point and $x$ a $W(k)$-valued point of $M$ lifting $x_0$.
\begin{proposition}\label{application of global to local}
One has a natural isomorphism in the category
$\mathcal{MF}^{\nabla}_{big, r}(\hat M_{x_0})$:
$$
\EE_0|_{\hat M_{x_0}}\cong\hat \xi_{crys}[\Sym^2\sN_0\otimes
\bigotimes_{i=1}^{r-1}\det(\sN_i)].
$$
\end{proposition}
\begin{proof}
Assume $d$ to be even. By Corollary \ref{claim 3}, one has a natural
isomorphism in $\mathcal{MF}^{\nabla}_{big, r}(\hat M_{x_0})$:
$$
H'|_{\hat M_{x_0}}\cong\hat
\xi_{crys}(\otimes_{i=0}^{r-1}\sN_i)\otimes M_{A_2}.
$$
By a Schur functor calculation as in Lemma \ref{schur functor},
one finds that via the isomorphism $\hat
\xi_{crys}[\Sym^2\sN_0\otimes \bigotimes_{i=1}^{r-1}\det(\sN_i)]$
is a direct factor of $\bigwedge^2(H'|_{\hat M_{x_0}})\otimes \EE^{\vee}(\chi^{-1})|_{\hat M_{x_0}}$. The point
is to show that it is the direct factor $\EE_0|_{\hat M_{x_0}}$.
Note that $\hat\xi_{crys}[ \Sym^2\sN_0\otimes
\bigotimes_{i=1}^{r-1}\det(\sN_i)]$ is the unique rank three
direct factor with the nontrivial filtration. So it suffices to
show that the rank three direct factor $\EE_0|_{\hat M_{x_0}}$ has
also this property. To that we show that the filtration of the
filtered $\phi^r$-module $(\EE_0)_x\otimes {\rm Frac}W(k)$ is
nontrivial. By Lemma \ref{schur functor},
$\Sym^2(\tilde{\V}_1(-2^{d-2}))$ is a direct factor of the
crystalline sheaf $\bigwedge^2(\HH'\otimes \Z_{p^r})$. So by Lemma
\ref{global to local} (i), $(\Sym^2\tilde{\V}_1(-2^{d-2}))_{x^0}$
is a crystalline lattice for the group $\Gal_{{\rm Frac}W(k)}$ and
by (iii), one has the equality (after taking the eigen component
to the eigenvalue $s_0$)
$$
(\EE_0)_x=\EE^\vee(\Sym^2(\tilde{\V}_1(-2^{d-2}))_{x^0})_0.
$$
Then by Lemma \ref{global to local} (ii) it is to determine the
filtration of $D_{crys}(\Sym^2(\tilde{\V}_1(-2^{d-2})_{x^0}\otimes
\Q_p))_0$. Consider the $\Gal_{{\rm Frac}W(k)}$-representation
$\Sym^2(\tilde{\V}_1(-2^{d-2})_{x^0}\otimes \Q_p)$. It is equal to
$\Sym^2(\tilde{\V}_{1,x^0}(-2^{d-2})\otimes \Q_p)$, and by
Proposition \ref{second crystalline tensor decomposition}
$\V_{1,x^0}\otimes \Q_p$ is crystalline for an open subgroup
$\Gal_E\subset \Gal_{{\rm Frac}W(k)}$. As
$$
\Sym^2(\tilde{\V}_{1,x^0}(-2^{d-2}))=
\Sym^2(\V_{1,x^0})\otimes_{\Z_{p^r}}\det(\V_{1,\sigma,x^0})\otimes_{\Z_{p^r}}
\cdots\otimes_{\Z_{p^r}}\det(\V_{1,\sigma^{r-1},x^0}),
$$
and the functor $D_{crys}$ commutes with a Schur functor for a
crystalline representation, we have
$$
D_{crys}(\Sym^2(\tilde{\V}_{1,x^0}(-2^{d-2})\otimes
\Q_p))_0=\Sym^2(D_{crys}(\tilde{\V}_{1,x^0}(-2^{d-2})\otimes
\Q_p)_0),
$$
which is seen to be naturally isomorphic to
 $
[\Sym^2M_0\otimes \bigotimes_{i=1}^{r-1}\det(M_i)]\otimes
{\rm Frac}(W(k_E))
 $.
This shows that the filtration of $(\EE_0)_x\otimes {\rm
Frac}W(k_E)$ is nontrivial. So is the filtration on
$(\EE_0)_x\otimes {\rm Frac}W(k)$.
\end{proof}

{\itshape Construction of $\sP_0$.} Consider the filtration on the
factor $\EE_0$. As the Hodge filtration on $H$ is filtered free (see
\S2 \cite{Fa3}), the induced filtration on $\EE_0$ by Proposition
\ref{direct sum decomposition of second/wedge power} is also
filtered free.
\begin{lemma}
The filtration $F$ on $\EE_0$ is nontrivial with form
$$
\EE_0=F^0\EE_0\supset F^1\EE_0\supset F^2\EE_0
$$
and each grading is locally free of rank one.
\end{lemma}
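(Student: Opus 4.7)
The plan is to reduce the assertion to the explicit local description provided by Lemma \ref{application of global to local} and carry out an elementary filtration computation on the local model.

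First I would observe that, by Proposition \ref{direct sum decomposition of second/wedge power}, $\EE_0$ arises as a direct summand of $\bigwedge^2(H',F,\phi^r,\nabla)$ (for $d$ even) or $\Sym^2(H',F,\phi^r,\nabla)$ (for $d$ odd) in the category $MF_{big,r}^\nabla(M)$. Since the decomposition comes from the eigen decomposition with respect to the $\Z_{p^r}$-action, and this action commutes with the Hodge filtration $F$, the filtration respects the decomposition and the induced filtration $F^i\EE_0$ is a filtration by locally free direct summands of $\EE_0$. The Hodge filtration on $H$ is concentrated in degrees $\{0,1\}$, so $F^i$ on $\bigwedge^2 H'$ (resp. $\Sym^2 H'$) vanishes for $i\geq 3$ and equals the whole for $i \leq 0$; the same is then true for $\EE_0$.

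Next I would localize at an arbitrary $k$-rational point $x_0\in M_0(k)$ and invoke Lemma \ref{application of global to local} to transfer the filtration on $\EE_0|_{\hat M_{x_0}}$ to the explicit local model
$$
\hat\xi_{crys}\Bigl[\Sym^2\sN_0 \otimes \bigotimes_{i=1}^{r-1}\det(\sN_i)\Bigr].
$$
Recall that $\sN_0$ is locally free of rank two with filtration $\sN_0 \supsetneq Fil^1\sN_0 \supsetneq 0$, with $Fil^1\sN_0$ locally free of rank one, while each $\sN_i$ for $i\geq 1$ has trivial (entirely $F^0$) filtration, since in the arrangement fixed after Corollary \ref{tensor factor has a filtered phi module structure} only $M_0$ (hence $\sN_0$) carries Hodge weight $1$. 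A direct computation then yields
$$
F^1(\Sym^2\sN_0) = Fil^1\sN_0\cdot \sN_0, \qquad F^2(\Sym^2\sN_0) = (Fil^1\sN_0)^{\otimes 2},
$$
with graded pieces $(\sN_0/Fil^1\sN_0)^{\otimes 2}$, $Fil^1\sN_0 \otimes (\sN_0/Fil^1\sN_0)$, $(Fil^1\sN_0)^{\otimes 2}$, each locally free of rank one; tensoring with the $F^0$-concentrated line bundle $\bigotimes_{i=1}^{r-1}\det(\sN_i)$ preserves this structure.

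Finally I would descend from the formal completions to $M$. Each $F^i\EE_0$ is a coherent subsheaf of $\EE_0$; by the previous step its completion at every closed point $x_0\in M_0$ is locally free of rank $3-i$. Faithful flatness of the completion map $\sO_{M,x_0}\to \hat\sO_{M,x_0}$ on stalks then forces $F^i\EE_0$ to be locally free of that same rank in a Zariski neighborhood of each closed point of $M_0$; combined with the generic smoothness of $M$ over $\sO_{\mathfrak p}$ this globalizes to the whole $M$. In particular the filtration is strict, nontrivial, of length three with each graded piece a line bundle. The main technical point is ensuring that the isomorphism in Lemma \ref{application of global to local} is filtration-preserving; this is built into its formulation as an isomorphism in the category $MF_{big,r}^\nabla(\hat M_{x_0})$, whose morphisms preserve the filtration by definition, so no serious obstacle arises.
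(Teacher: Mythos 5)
Your proof is correct and follows essentially the same route as the paper: both reduce via Lemma \ref{application of global to local} to the local model $\Sym^2\sN_0 \otimes \bigotimes_{i=1}^{r-1}\det(\sN_i)$ and then read off the filtration from the fact (established in the proofs of Propositions \ref{second crystalline tensor decomposition} and \ref{possible newton slopes}) that $\sN_0$ alone carries the rank-one $Fil^1$. The paper cites "filtered free" (stated just before the lemma) to globalize the pointwise rank computation, which is the same content as your flatness/descent step, just phrased more compactly; your version is merely more explicit about the symmetric-power filtration computation and the descent.
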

\begin{proof}
As it is filtered free, it suffices to show this over a point $x$ as
above. Then it follows from Proposition \ref{application of global
to local} and the proofs of Propositions \ref{second crystalline
tensor decomposition}, \ref{possible newton slopes}.
\end{proof}
By the lemma we put $\sP=\frac{\EE_0}{F^1\EE_0}=\frac{\tilde{\EE}_0}{F^1\tilde{\EE}_0}$ and $\sP_0$ be the
modulo $p$ reduction of $\sP$ which is defined over $M_0$. Next we
consider the line bundle $\sP^0$ over $M^0$ by taking a comparison
with the variation of Hodge structures at infinity associated to the
Mumford family. Let $(H_{dR}^1,F_{hod},\nabla^{GM})$ be the
automorphic vector bundle over $M_K$ coming from the universal
family of abelian varieties over $M_K$. One has a natural
isomorphism
$$
(H,F,\nabla)\otimes_{\sO_{\mathfrak p}} F_{\mathfrak
p}\cong(H_{dR}^1,F_{hod},\nabla^{GM})\otimes_{F} F_{\mathfrak p}.
$$
We intend to show a tensor decomposition of
$(H',F,\nabla)\otimes_{\sO_{\mathfrak p,\tau}} \bar{\Q}_p$ of the
form
$$
(H',F,\nabla)\otimes_{\sO_{\mathfrak p,\tau}} \bar
\Q_p=(H_1,F_1,\nabla_1)\otimes \cdots\otimes (H_d,F_d,\nabla_d),
$$
where the first tensor factor on the right hand side is the unique
one admitting the nontrivial filtration. For this we apply the
theory of de Rham cycles as developed in \S2.2 \cite{Ki}. Let
$\{s_{\alpha,B}\}\subset H_{\Q}^{\otimes}$ be a finite set of
tensors defining the subgroup $G_{\Q}\subset \GL(H_{\Q})$. By
Corollary 2.2.2 loc. cit., it defines a set of de Rham cycles
$\{s_{\alpha,dR}\}\subset (H^1_{dR})^{\otimes}$ defined over the
reflex field $\tau(F)$, which are by definition
$\nabla^{GM}$-parallel and contained in $Fil^0$.
\begin{lemma}
The set of de Rham cycles $\{s_{\alpha,dR}\}$ induces a
direct-tensor decomposition
$$
(H_{dR}^1,F_{hod},\nabla^{GM})\otimes_{F,\tau}\bar
\Q=[\bigotimes_{i=1}^{d}(H_{dR,i}^1,F_{hod,i},\nabla_i^{GM})]^{\oplus
2^{\epsilon(D)}}
$$
such that the factor $(H_{dR,1}^1,F_{hod,1},\nabla_1^{GM})$ is the
unique one with nontrivial filtration.
\end{lemma}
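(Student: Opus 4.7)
The strategy is to transport the Betti-side tensor decomposition to the de Rham side via the Betti-to-de-Rham comparison, in direct analogy with the $p$-adic argument of Section~3 but carried out over $\bar{\Q}$ via the embedding $\tau$. The input is that the de Rham cycles $\{s_{\alpha,dR}\}$ cut out the subgroup $G_{\Q}\subset \GL(H_{\Q})$ (after base change), and the output is then extracted from the explicit $G_{\Q}$-representation structure of $H_{\Q}$.

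First I would produce the Betti decomposition. Over $\bar{\Q}$ one has the tautological isomorphism
$$\Cor_{F|\Q}(D)\otimes_{\Q}\bar{\Q} \cong \bigotimes_{i=1}^{d} D_i,$$
and each $D_i\cong M_2(\bar{\Q})$ because $D$ is split over $\bar{\Q}$. The same argument as in Lemma \ref{tensor decomposition wrt D star} and Proposition \ref{tensor decomposition of linear representation} (but simpler, since no Galois descent over a local field is involved) then yields a natural $G_{\bar{\Q}}$-equivariant tensor decomposition
$$H_{\Q}\otimes_{\Q,\tau}\bar{\Q} \;\cong\; \Bigl(\bigotimes_{i=1}^{d} H_{B,i}\Bigr)^{\oplus 2^{\epsilon(D)}},$$
where each $H_{B,i}$ is a rank-two $\bar{\Q}$-vector space and the indexing is arranged so that $i=1$ corresponds to $\tau=\tau_1$; the $2^{\epsilon(D)}$-multiplicity comes from $\dim_{\Q}\Q(\sqrt{b})$ exactly as in the archimedean analogue of the commutative diagram in Proposition \ref{tensor decomposition of linear representation}.

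Next I would transport to the de Rham side. The Betti tensors $\{s_{\alpha,B}\}\subset H_{\Q}^{\otimes}$ by construction cut out $G_{\Q}\subset \GL(H_{\Q})$; by Corollary 2.2.2 of \cite{Ki} (Deligne's theorem on absolute Hodge cycles in the present setting), the de Rham cycles $\{s_{\alpha,dR}\}$ correspond to $\{s_{\alpha,B}\}$ under comparison and therefore cut out the same subgroup of $\GL(H^1_{dR}\otimes_{F,\tau}\bar{\Q})$. The tensor decomposition above is the isotypic decomposition for the action of the centre of $(\bigotimes_i D_i)^{*}$, which lies inside $G$, and hence is preserved by $G$. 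Consequently it descends to a decomposition of $H^1_{dR}\otimes_{F,\tau}\bar{\Q}$ induced by the $s_{\alpha,dR}$. The horizontality of the $s_{\alpha,dR}$ (also from loc.\ cit.) immediately gives the factorisation of $\nabla^{GM}$. For the filtration, the Hodge-de-Rham cocharacter $\mu_{HdR}:\G_m\to G_{\C}$ lifts, after passing to a suitable power, to $\tilde{G}(\C)=\C^{*}\times \SL_2(\C)^{\times d}$, and the construction of $h_0$ in Section~2 shows that its projection to the $i$-th $\SL_2$-factor is trivial for $i\neq 1$. This is the exact archimedean analogue of the argument used for $\mu_{HT}$ in Proposition \ref{first crystalline tensor decomposition}; it forces $F_{hod,i}$ to be trivial for $i\neq 1$ and leaves $F_{hod,1}$ as the unique nontrivial filtration factor.

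The main obstacle is the rigorous matching of the representation-theoretic decomposition on the Betti side with the decomposition induced by the algebraic de Rham cycles; this rests on Corollary 2.2.2 of \cite{Ki}, which identifies the absolute Hodge cycles $s_{\alpha,B}$ with the $\nabla^{GM}$-parallel de Rham cycles $s_{\alpha,dR}$. A secondary technical point is the correct bookkeeping of the $2^{\epsilon(D)}$-multiplicity in the nonsplit case $\epsilon(D)=1$, where the tensor decomposition factors through $(\Cor_{F|\Q}D\otimes_{\Q}\Q(\sqrt b))^{*}$; this is handled by the archimedean analogue of the commutative diagram appearing in the proof of Proposition \ref{tensor decomposition of linear representation}, and introduces no new difficulty beyond what is already done in that proof.
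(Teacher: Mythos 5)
Your strategy is essentially the one the paper uses: produce the Betti-side tensor decomposition from the $G_{\bar\Q}$-representation structure of $H_\Q\otimes\bar\Q$, transport it to the de Rham side using the fact that the $s_{\alpha,dR}$ correspond to the $s_{\alpha,B}$, descend to $\bar\Q$ via the field of definition, and read off the filtration from $h_0$. Two remarks.

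First, the paper carries out the transport differently: rather than arguing fiberwise via Kisin's Corollary 2.2.2, it passes to the complex uniformization $\pi:\tilde M_{an}=X\times G(\A_f)/K\to M_K(\C)$, where $\pi^*((H^1_{dR},\nabla^{GM})\otimes\C)$ is literally trivialized to $(H_\Q\otimes\sO_{\tilde M_{an}},d)$, applies the §3.1 tensor decomposition on that trivialization, and then descends using $G(\Q)$-equivariance. This makes the gluing of the pointwise decompositions into a vector-bundle decomposition completely explicit, a step your argument leaves implicit. Your route is not wrong, but you should say how the fiberwise decompositions, each induced at a point $x$ from $H_\Q\otimes\C\cong H^1_{dR,x}\otimes\C$, glue; the universal-cover argument is exactly the device that makes this automatic.

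Second, the claim that ``the tensor decomposition above is the isotypic decomposition for the action of the centre of $(\bigotimes_i D_i)^*$'' is incorrect: $\bigotimes_i D_i\cong M_{2^d}(\bar\Q)$ is central simple, so its centre is $\bar\Q$ and acts by scalars, giving no nontrivial isotypic decomposition. The reason the decomposition is preserved by $G$ is simpler and is what §3.1 actually establishes: the $G_{\bar\Q}$-representation on $H_\Q\otimes\bar\Q$ explicitly factors through the tensor-product map from a product of $\GL_2$'s (via ${\rm Nm}$), so the tensor decomposition is $G$-stable by construction. With that correction, and the gluing step made explicit, the rest of your argument (use of $\mu_{HdR}$ and $h_0$ for the filtration, bookkeeping of the $2^{\epsilon(D)}$ multiplicity) matches the paper's proof.
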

\begin{proof}
Let $\pi: \tilde M_{an}:=X\times G(\A_f)/K\to M_K(\C)$ be the
natural projection of complex analytic spaces. The pull-back of
$(H_{dR}^1,\nabla^{GM})\otimes \C$ over $M_K(\C)$ via $\pi$ is
trivialized, and by the de Rham isomorphism it is isomorphic to
$(H_{\Q}\otimes_{\Q} \sO_{\tilde M_{an}},1\otimes d)$. By a similar
discussion on the direct-tensor decomposition of the
$G(\Q)$-representation $H_{\Q}\otimes_{\Q} \C$ as given in \S2.1,
the tensors $s_{\alpha,B}\otimes 1$s induce a tensor decomposition
of $\pi^*((H_{dR}^1,\nabla^{GM})\otimes \C)$. It is
$G(\Q)$-equivariant by construction, and hence descends to a
decomposition on $(H_{dR}^1,\nabla^{GM})\otimes \C$. This is the
same tensor decomposition induced by the tensors $s_{\alpha,dR}$s.
Since they are defined over $\tau(F)$, the tensor decomposition
already occurs over $\bar \Q$. We have also to check the property
about the filtration in the tensor decomposition. Note that the
Hodge filtration $\pi^*(H_{dR}^1,F_{hod})\otimes \C$ over the point
$[0\times id]$ is induced from $\mu_{h_0}: \G_m(\C)\to G_{\C}\subset
\GL(H_{\C})$. The assertion follows then from the definition of
$h_0$ in \S\ref{Section on Shimura curve of Hodge type}.
\end{proof}
Composed with the embedding $\iota: \bar \Q\hookrightarrow \bar
\Q_p$, we obtain the claimed tensor decomposition on
$(H',F,\nabla)\otimes_{\sO_{\mathfrak p,\tau}} \bar{\Q}_p$. Taking
the grading with respect to $F_{hod,i}$, one obtains the
associated Higgs bundle $(E_i,\theta_i)$ with
$(H_{dR,i}^1,F_{hod,i},\nabla_i^{GM})$. By the lemma, only
$\theta_1$ is nontrivial. In fact, it is a maximal Higgs field (see
\cite{VZ}), that is,
 $$
\theta_1: F_{hod,1}^1\stackrel{\cong}{\longrightarrow}
\frac{H^1_{dR,1}}{F_{hod,1}^1}\otimes \Omega_{M_K\otimes \bar \Q}.
 $$
Actually over each connected component of $M_K$, $\theta_1\otimes
\C$ is a morphism of locally homogenous bundles of rank one. Then
it must be an isomorphism, because it will otherwise be zero, and
together with the zero Higgs fields on the other factors
$E_i,i\geq 2$, this implies that the Kodaira-Spencer map of the
universal family is trivial, which is absurd. As both
$F_{hod,1}^1$ and $\frac{H^1_{dR,1}}{F_{hod,1}^1}$ are locally
homogenous line bundles over each connected component of $M_K$,
their isomorphism classes are determined by the corresponding
representations of $K_{\R}\otimes \C$, where $K_{\R}$ is the
stabilizer of $G(\R)$ at $0\in X$. In this way one easily shows
that they are dual to each other. By putting $\sL:=F_{hod,1}^1$,
one has then
 $$
\theta_1: \sL\cong\sL^{-1} \otimes \Omega_{M_K\otimes \bar \Q}
 .$$
By abuse of notation, we use $\sL$ again to denote the base change
of $\sL$ to $$\bar M^0:=M^0\otimes_{F_{\mathfrak p}}\bar
\Q_p=(M_K\otimes_{F,\tau}\bar \Q)\otimes \bar \Q_p
 .$$
\begin{lemma}
One has a natural isomorphism
 $
\sP^0\cong\sL^{-2}
 $ over $\bar M_0$.
\end{lemma}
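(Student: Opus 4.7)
The strategy is to pass to the tensor decomposition over $\bar M^0$ given by the preceding lemma and compute the rank one graded piece $F^0/F^1$ of $\EE_0|_{\bar M^0}$ directly. Recall that $\EE_0$ is characterized in Proposition \ref{direct sum decomposition of second/wedge power} and Lemma \ref{application of global to local} as the unique rank three direct summand of $\bigwedge^2 H'$ (for $d$ even) or $\Sym^2 H'$ (for $d$ odd) carrying a nontrivial Hodge filtration. Under the decomposition $(H',F,\nabla)\otimes_{\sO_{\mathfrak p,\tau}}\bar\Q_p\cong \bigotimes_{i=1}^d(H^1_{dR,i},F_{hod,i},\nabla^{GM}_i)$, only $F^1_{hod,1}=\sL$ is nonzero; hence the Schur expansion of $\bigwedge^2(\bigotimes_i H^1_{dR,i})$ (or $\Sym^2(\bigotimes_i H^1_{dR,i})$) contains a \emph{unique} rank three summand with nontrivial filtration, namely the one placing $\Sym^2$ on the first tensor factor and $\det$ on all others:
$$
\EE_0|_{\bar M^0}\cong \Sym^2 H^1_{dR,1}\otimes \bigotimes_{i=2}^d\det H^1_{dR,i}.
$$
The Tate twist $\{-2^{d-1}\}$ and the twist by $\det(\V_1)^{-1/2}$ built into $\tilde\V_1$ only shift the Hodge indexing so that the filtration begins at $F^0$; they do not alter the underlying $\sO$-module.

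Second, since only the first tensor factor carries a nontrivial Hodge filtration, the induced filtration on $\EE_0|_{\bar M^0}$ is read off directly from that of $\Sym^2 H^1_{dR,1}$: one finds $F^1\EE_0|_{\bar M^0}\cong \sL\cdot H^1_{dR,1}\otimes \bigotimes_{i\geq 2}\det H^1_{dR,i}$, so
$$
\sP^0\;\cong\;\frac{\Sym^2 H^1_{dR,1}}{\sL\cdot H^1_{dR,1}}\otimes\bigotimes_{i=2}^d\det H^1_{dR,i}\;\cong\;(H^1_{dR,1}/\sL)^{\otimes 2}\otimes\bigotimes_{i=2}^d\det H^1_{dR,i}.
$$
The duality $H^1_{dR,1}/\sL\cong \sL^{-1}$, established just before the statement via the maximal Higgs field $\theta_1$ and the $K_{\R}$-module classification of the homogeneous line bundles $\sL$ and $H^1_{dR,1}/\sL$, yields $(H^1_{dR,1}/\sL)^{\otimes 2}\cong \sL^{-2}$.

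Third, it remains to trivialize each $\det H^1_{dR,i}$ for $i\geq 2$. These tensor factors correspond to the archimedean places $\tau_2,\ldots,\tau_d$ of $F$ at which $D$ is ramified, and hence to the compact factors $\SU(2)$ in $\tilde G'_\R\cong \SL_2(\R)\times \SU(2)^{\times(d-1)}$. Since $\det$ of the standard representation of $\SU(2)$ is the trivial character, the $K_{\R}$-character determining the homogeneous line bundle $\det H^1_{dR,i}$ is trivial, hence $\det H^1_{dR,i}\cong \sO_{\bar M^0}$. Combined with step two this produces the desired isomorphism $\sP^0\cong \sL^{-2}$. The main obstacle is the first step: matching the twisted crystal $\EE^*(\Sym^2\tilde\V_1)_0\{-2^{d-1}\}$ with the Schur summand above on the nose, carefully accounting for all the auxiliary Tate twists so that they contribute nothing to the final identification of the underlying line bundle.
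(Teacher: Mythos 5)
Your proof takes a genuinely different route from the paper's, and while the overall plan is sensible, it contains a gap that you yourself half-acknowledge at the end.

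The paper does not pass through a Schur decomposition of $\bigwedge^2(\bigotimes H^1_{dR,i})$ followed by a separate trivialization of the determinant factors. Instead, it works directly with the local comparison isomorphism $\HH'\otimes_{\Z_p}B(R)\cong H'\otimes_R B(R)$ for $U=\Spec R$ small affine, rewrites it using the twisted local systems $\tilde\V_i$ (whose determinants are trivial by construction) on the \'etale side, invokes the Blasius--Wintenberger result that the \'etale and de Rham Hodge tensors $s_{\alpha,et}$ and $s_{\alpha,dR}$ correspond under the $p$-adic comparison so that the tensor factors match up by index, and then takes second symmetric power and $\Gamma_R$-invariants to obtain a \emph{filtered} isomorphism $(\EE_0,F)\otimes\sO_{\bar M^0}\cong\Sym^2(H_1,F_1)$. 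The conclusion $\sP^0\cong\sL^{-2}$ then drops out from the duality $H^1_{dR,1}/\sL\cong\sL^{-1}$ already established by the maximal Higgs field.

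The concrete gaps in your argument are the following. First, step one identifies $\EE_0|_{\bar M^0}$ with the unique rank-three nontrivially-filtered Schur summand of $\bigwedge^2(\bigotimes H^1_{dR,i})$, but this uniqueness argument only identifies the correct summand if one already knows that the crystalline decomposition of Proposition~\ref{direct sum decomposition of second/wedge power} and the de Rham tensor decomposition are compatible under the comparison isomorphism; this compatibility is exactly the Blasius--Wintenberger input that the paper invokes and that your argument leaves implicit. Second, and more seriously, your step three claims $\det H^1_{dR,i}\cong\sO_{\bar M^0}$ for $i\geq2$ by observing that $\det$ of the standard representation of $\SU(2)$ is trivial. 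This addresses only the $\SU(2)$-factors of $K_\R$, not the central $\G_m$. The $\G_m$ need not act trivially on the individual tensor factors $H^1_{dR,i}$ (the tensor decomposition only pins down the factors up to a rank-one twist), so without a normalization their determinants can carry a nontrivial $\G_m$-character. The paper avoids this entirely by twisting the \'etale factors to $\tilde\V_i=\V_{1,\sigma^{i-1}}\otimes\det(\V_1)^{-1/2}$, making the determinant issue vanish by fiat, and then arranging the Tate twist $\{-2^{d-1}\}$ so that the comparison lands on $\Sym^2(H_1,F_1)$ rather than on a twist of it. To make your approach rigorous you would need to carry out the analogous normalization on the de Rham side (or account for the $\G_m$-character explicitly), which is precisely the twist bookkeeping you flag as the ``main obstacle'' in your last sentence.
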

\begin{proof}
In fact we show that there is a natural isomorphism
 $
(\EE_0,F)\otimes \sO_{\bar M^0}\cong\Sym^2(H_1,F_1)
 $.
We raise the defining field of $M^0$ so that it contains the
defining field of $H_i, 1\leq i\leq d$ and $\Q_{p^d}$. By abuse of
notation, we use the same notation to mean an above object after
the base change. Let $U=\Spec R\subset M$ be a small affine
subset. We have a natural isomorphism
 $
\HH'\otimes_{\Z_p}B(R)\cong H'\otimes_{R} B(R)
 $
respecting $\Gamma_R$-actions and filtrations (we forget the
$\phi$s in the isomorphism). As $\Q_{p^d}\subset B(R)$, we can
write it as
$$
(\tilde \V_1\otimes\cdots\otimes
\tilde\V_d)\otimes_{\Q_{p^d}}B(R)\cong(H_1\otimes \cdots\otimes
H_d)\otimes_{R[\frac{1}{p}]}B(R),
$$
or
$$
\bigotimes_{i=1}^{d}[\tilde
\V_i\otimes_{\Q_{p^d}}B(R)]\cong\bigotimes_{i=1}^{d}[H_i\otimes_{R[\frac{1}{p}]}B(R)].
$$
In the comparison the tensor factor with numbering is preserved,
because it is so over a general $\bar \Q$-rational point of each
connected component of $U$ by a result of Blasius and Wintenberger
(see \cite{Bl}, see also \S4 \cite{Og}), which asserts that in the
$p$-adic comparison the tensors $s_{\alpha,et}$ and $s_{\alpha,dR}$
correspond. Then taking the second wedge (symmetric) power for $n$
even (odd) of the above isomorphism, we find the isomorphism $
\Sym^2\V_1\otimes_{\Q_{p^d}}B(R)\cong\Sym^2H_1\otimes_{R[\frac{1}{p}]}B(R)
 $
which respects $\Gamma_R$-actions and filtrations. Taking
$\Gamma_R$-invariants of both sides, we obtains the claimed
isomorphism over $\Spec R[\frac{1}{p}]$. By the naturalness of the
comparison, the local isomorphisms glue into a global one.
\end{proof}

By the main theorems of Langton \cite{La}, the line bundle
$\sL^{-1}$ extends over $M\otimes \bar \Z_p$ with the modulo $p$
reduction $\sL_0^{-1}$, and the isomorphism in the above lemma
specializes to an isomorphism between $\sP_0$ and $\sL_0^{-2}$. So
we have shown the first isomorphism in the following
\begin{proposition}\label{grading}
One has natural isomorphisms
 $
\sP_0\cong\sL_0^{-2}\cong\Omega_{M_0}^{-1}
 $ over $\bar M_0$.
\end{proposition}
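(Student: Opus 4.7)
The plan is to combine the previous lemma with the maximal Higgs field property already exhibited in the paragraphs preceding the statement. The first isomorphism $\sP_0\cong\sL_0^{-2}$ is the mod-$\mathfrak p$ reduction of the generic-fiber isomorphism $\sP^0\cong\sL^{-2}$ established in the previous lemma: since $\sL^{-1}$ has been extended over $M\otimes\bar\Z_p$ via Langton's theorem (as recalled just before the statement), the very same specialization argument yields $\sP_0\cong\sL_0^{-2}$ on the closed fiber.

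To prove the second isomorphism, equivalently $\sL_0^{\otimes 2}\cong\Omega_{M_0}$, I would start in characteristic zero from the maximal Higgs field $\theta_1:\sL\xrightarrow{\sim}\sL^{-1}\otimes\Omega_{M_K\otimes\bar\Q}$, which immediately gives $\sL^{\otimes 2}\cong\Omega_{M_K\otimes\bar\Q}$. Base-changing along $\iota:\bar\Q\hookrightarrow\bar\Q_p$ yields $\sL^{\otimes 2}|_{\bar M^0}\cong\Omega_{\bar M^0/\bar\Q_p}$. Both sides extend to line bundles on the integral model $M\otimes\bar\Z_p$: the left via Langton's extension of $\sL$, and the right as the relative cotangent sheaf $\Omega_{M\otimes\bar\Z_p/\bar\Z_p}$, which is a line bundle because $M$ is smooth of relative dimension one over $\sO_{\mathfrak p}$. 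A Picard-scheme argument then promotes the generic isomorphism to an integral one: on each geometrically connected component, the kernel of the specialization map $\mathrm{Pic}(M\otimes\bar\Z_p)\to \mathrm{Pic}(\bar M^0)$ is generated by line bundles supported on the special fiber together with pull-backs from $\Spec\bar\Z_p$, and this kernel vanishes because each component of $M\otimes\bar k$ is irreducible (by smoothness of the integral canonical model) and $\mathrm{Pic}(\Spec\bar\Z_p)$ is trivial. Reducing mod $\mathfrak p$ then gives $\sL_0^{\otimes 2}\cong\Omega_{M_0}$ as desired.

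The main obstacle is this integral extension step: one must be sure that the extension of $\sL$ supplied by Langton's theorem is the \emph{unique} one (up to canonical isomorphism) compatible with the generic $\Omega$-identification, so that squaring it really recovers $\Omega_{M\otimes\bar\Z_p/\bar\Z_p}$ on the nose. This hinges on the injectivity of Picard-group specialization on each geometric component of $M$, which in turn rests on the smoothness of the integral canonical model together with the triviality of $\mathrm{Pic}(\Spec\bar\Z_p)$. Once this step is secured, chaining the two isomorphisms on $\bar M_0$ concludes the proof.
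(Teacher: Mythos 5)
Your proof is correct and follows the same route the paper intends: the paper invokes the Langton/Picard-scheme specialization argument just before the proposition for $\sP_0\cong\sL_0^{-2}$, and disposes of $\sL_0^{2}\cong\Omega_{M_0}$ by saying it follows ``for the same reason as above,'' which is exactly the uniqueness-of-extension step you have made explicit. The only small imprecision is that $\bar\Z_p$ is not a discrete valuation ring, so the injectivity of restriction $\mathrm{Pic}(M\otimes\bar\Z_p)\to\mathrm{Pic}(\bar M^0)$ should really be argued over a finite extension $\sO_E$ of $\sO_{\mathfrak p}$ (where the special fiber is a principal divisor and each component is irreducible by smoothness plus properness) and then passed to the colimit over $E$.
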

\begin{proof}
We have shown that over $\bar M^0$ the Higgs field $\theta_1$
induces an isomorphism $\sL^2\cong\Omega_{M^0}$. For the same
reason as above, this isomorphism specializes into an isomorphism
$\sL_0^2\cong\Omega_{M_0}$.
\end{proof}

{\itshape Construction of $\tilde F_{rel}$.} For each small affine
$U\in \sU$, we choose a Frobenius lifting $F_U: \hat U\to \hat U$,
where $\hat U$ is the $p$-adic completion of $U$. As $\EE_0$ is an
object in $\mathcal{MF}^{\nabla}_{big,r}(M)$, there is a map
 $
\phi_{r,F_U}: F_U^{*r}\EE_0|_{\hat U}\to \EE_0|_{\hat U}
 $.
By Proposition \ref{direct sum decomposition of second/wedge power},
$\phi_{r,F_U}$ is the restriction of the second wedge (resp.
symmetric) power of the $r$-th iterated relative Frobenius morphism
$\phi_{F_U}: F_U^{*}H'|_{\hat U}\to H'|_{\hat U}$ for $d$ even
(resp. odd) to the direct factor $\EE_0|_{\hat U}$.
\begin{lemma}\label{jumping is controled by one factor}
For each $U$, the image $\phi_{r,F_U}(F_U^{*r}\EE_0|_{\hat
U})\subset \EE_0|_{\hat U}$ is divisible by $p^{r-1}$, but not
divisible by $p^r$.
\end{lemma}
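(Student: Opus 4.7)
The plan is to test the claim on the formal completion $\hat M_{x_0}$ at each $k$-rational point $x_0$ of $U_0$, where Lemma~\ref{application of global to local} provides the explicit identification
$$
\EE_0|_{\hat M_{x_0}}\cong \hat \xi_{crys}\Bigl[\Sym^2\sN_0\otimes\bigotimes_{i=1}^{r-1}\det(\sN_i)\Bigr]
$$
in $MF^{\nabla}_{big,r}(\hat M_{x_0})$, so that $\phi_{r,F_U}$ is carried to $\Sym^2\phi_{\sN_0}\otimes\bigotimes_{i\geq 1}\det\phi_{\sN_i}$. Since $\EE_0|_{\hat U}$ is coherent on the Noetherian $p$-adic formal scheme $\hat U$, the divisibility of the image by $p^{r-1}$ can be tested at every such formal neighborhood, while the non-divisibility by $p^r$ will be witnessed by any single one.

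The first task is to control the iterates $\phi_{\sN_i}=\phi_{\sN_B}^r|_{\sN_i}$. Although $\phi_{\sN_B}$ permutes the eigenspaces $\sN_0,\dots,\sN_{r-1}$ cyclically, the one-dimensional Hodge filtration $Fil^1\sN_B$ sits entirely inside $\sN_0$. Applying the strong divisibility relation $\phi_{\sN_B}(\sN_B)+p^{-1}\phi_{\sN_B}(Fil^1\sN_B)=\sN_B$ componentwise shows that $\phi_{\sN_B}\colon\sN_j\to\sN_{j+1\bmod r}$ is an isomorphism for every $j\neq 0$, whereas $\phi_{\sN_B}\colon\sN_0\to\sN_1$ has image of index $p$ in $\sN_1$. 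Consequently each iterate $\phi_{\sN_i}$ traverses this unique non-unit step exactly once, so $\det\phi_{\sN_i}$ has $p$-valuation one for every $0\leq i\leq r-1$.

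With this in hand, I pick a basis $\{e_1,e_2\}$ of $\sN_0$ with $e_1\in Fil^1\sN_0$ and set $e_1':=p^{-1}\phi_{\sN_0}(e_1)$, $e_2':=\phi_{\sN_0}(e_2)$; the determinant count just made shows $\{e_1',e_2'\}$ is again a basis of $\sN_0$, and $\Sym^2\phi_{\sN_0}$ has matrix $\operatorname{diag}(p^2,p,1)$ in the bases $\{e_1^2,e_1e_2,e_2^2\}$ and $\{(e_1')^2,e_1'e_2',(e_2')^2\}$. Combined with the factor $p\cdot(\text{unit})$ contributed by each $\det\phi_{\sN_i}$ for $i\geq 1$, the image of $\phi_{r,F_U}$ on $\EE_0|_{\hat M_{x_0}}$ equals
$$
p^{r-1}\cdot\bigl\langle p^2(e_1')^2,\,pe_1'e_2',\,(e_2')^2\bigr\rangle\otimes\bigotimes_{i=1}^{r-1}\det(\sN_i).
$$
This is manifestly divisible by $p^{r-1}$ and contains $p^{r-1}(e_2')^2\otimes\bigotimes_{i=1}^{r-1}\omega_i$ (for $\omega_i$ any basis of $\det\sN_i$), which is not in $p^r\EE_0$ since $(e_2')^2$ is a basis element of $\Sym^2\sN_0$.

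The main obstacle is the bookkeeping in the second paragraph: one must verify that strong divisibility forces the entire $p$-valuation of $\det\phi_{\sN_B}$ to localize on the single transition $\sN_0\to\sN_1$, and that as a consequence every $\det\phi_{\sN_i}$ for $i\geq 1$ still carries a factor of $p$ despite $\sN_i$ itself having trivial Hodge filtration. Once this concentration of the $p$-factor is identified, the symmetric-square and determinant computations above are mechanical, and the globalization from $\hat M_{x_0}$ to $\hat U$ is formal.
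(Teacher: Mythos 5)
Your proposal is correct, and it takes a genuinely different (and in fact tighter) route than the paper's. Both arguments begin the same way: reduce to the completion $\hat M_{x_0}$ and invoke Lemma~\ref{application of global to local} to replace $\EE_0|_{\hat M_{x_0}}$ by $\hat\xi_{crys}[\Sym^2\sN_0\otimes\bigotimes_{i\ge 1}\det\sN_i]$. The paper then argues with the \emph{Newton} slopes: it cites Proposition~\ref{possible newton slopes} to read off the slopes of $\det M_i$ and of $\Sym^2 M_0$ at a closed point, deduces the $p^{r-1}$-divisibility, and appeals to Remark~\ref{remark on the existence of NP} (existence of the nonsupersingular Newton polygon in the family) to witness the non-divisibility by $p^r$ at \emph{some} closed point. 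You instead work directly with the integral (Hodge-theoretic) structure: the strong divisibility condition for $(\sN_B,Fil^1,\phi_{\sN_B})\in MF^{\nabla}_{[0,1]}(\hat U_{\sG_B})$, decomposed along the eigenspaces $\sN_0,\dots,\sN_{r-1}$, shows that $\phi_{\sN_B}$ is unimodular on each $\sN_j\to\sN_{j+1}$ with $j\neq 0$ and has image of index exactly $p$ in $\sN_1$ when $j=0$, because the whole of $Fil^1\sN_B$ sits inside $\sN_0$. From this you read off the elementary divisors $\{1,p\}$ of every $\phi_{\sN_i}=\phi_{\sN_B}^r|_{\sN_i}$ directly over the base $R_{\sG_B}$, hence $\{p^{r-1},p^r,p^{r+1}\}$ for the tensor-symmetric power, and the dichotomy ``divisible by $p^{r-1}$, not by $p^r$'' falls out uniformly.

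What the two approaches buy: the paper's argument is shorter at the price of a case split by Newton polygon and an auxiliary existence input (Remark~\ref{remark on the existence of NP}), plus a step where one must pass from Newton slopes to integral divisibilities of the Frobenius; your argument avoids the case split and the existence input entirely, and establishes the non-divisibility by $p^r$ at every $x_0$ rather than at some $x_0$ (which is indeed the true state of affairs, since the elementary divisors of $\phi_{\sN_B}$ are constant across the family even as the Newton polygon jumps). A small point worth making explicit if you write this up: the ``index $p$'' conclusion requires knowing the strong divisibility identity holds for $\sN_B$ over $R_{\sG_B}$, which is guaranteed because $\sN_B$ arises by the Faltings recipe from the Dieudonn\'e module of a $p$-divisible group and hence lies in $MF^{\nabla}_{[0,1]}$ rather than merely in the big category; and the fact that $\{p^{-1}\phi_{\sN_B}(e_1),\phi_{\sN_B}(e_2)\}$ is a basis of $\sN_1$ (so that your $\{e_1',e_2'\}$ is a basis of $\sN_0$) uses that $R_{\sG_B}$ is local, via Nakayama. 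Both points are routine, but spelling them out would remove the only places a reader might pause.
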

\begin{proof}
In the $p$-adic filtration $$\EE_0|_{\hat U}\supset p\EE_0|_{\hat
U}\cdots \supset p^{i-1}\EE_0|_{\hat U}\supset p^{i}\EE_0|_{\hat
U}\supset \cdots,$$ there is a unique $i$ with the property
$$ p^{i-1}\EE_0|_{\hat U}\supset \phi_{r,F_U}(F_U^{*r}\EE_0|_{\hat U}) \supsetneq p^{i}\EE_0|_{\hat U}
.$$ It is to show that $i=r$, or equivalently that the images of
$\phi_{r,F_U}(F_U^{*r}\EE_0|_{\hat U})$ in the successive gradings
$\frac{p^{i-1}\EE_0|_{\hat U}}{p^{i}\EE_0|_{\hat U}}$ are zero for
$1\leq i<r$ and nonzero for $i=r$. Let $x_0\in \hat U(k)$ and $\hat
U_{x_0}$ the completion of $\hat U$ at $x_0$. It is equivalent to
show the above statement over each $\hat U_{x_0}$. This follows from
the description of the relative Frobenius $\phi$ over the formal
neighborhood $\hat U_{x_0}$ as described in \S\ref{subsection on
formal tensor decomposition}  and the result for the closed point
$x_0$. In detail it goes as follows: by Proposition \ref{application
of global to local}, the filtered $\phi^r$-module $\EE_0|_x$ is
isomorphic to $\Sym^2M_0\otimes (\bigotimes_{i=1}^{r-1}\det
M_i)\otimes \textrm{unit crystal}$ over $W(\bar k)$. By Proposition
\ref{possible newton slopes}, the Newton slope of the rank one
$\phi^r$-module $\det M_i, i\geq 1$ is either $1\times 1$ or
$1\times 2$. In the former case, the Newton slopes of $\Sym^2M_0$
are $\{1\times 0, 1\times 1, 1\times 2\}$. These imply that
$\phi_r(\EE_0|_x)$ is always divisible by $p^{r-1}$. By Remark
\ref{remark on the existence of NP}, the former case does occur for
a certain $x_0$. So $\phi_r(\EE_0|_x)$ is not divisible by $p^r$ at
such a closed point.
\end{proof}
As $\phi_{r,F_U}(F_U^{*r}F^1\EE_0|_{\hat U})\subset
p^{r}\EE_0|_{\hat U}$, the composite of the following morphisms
$$
F_U^{*r}F^1\EE_0|_{\hat U}\hookrightarrow F_U^{*r}\EE_0|_{\hat U}
\stackrel{\frac{\phi_{r,F_{U}}}{p^{r-1}}}{\longrightarrow}
\EE_0|_{\hat U}\stackrel{pr}{\twoheadrightarrow} \sP|_{\hat
U}\stackrel{\mod p}{\twoheadrightarrow} \sP_0|_{U_0}
$$
is zero. As a result it gives the following morphism
$$
\frac{F_U^{*r}\EE_0|_{\hat U}}{F_U^{*r}F^1\EE_0|_{\hat
U}}=F_U^{*r}\sP|_{\hat U}\to \sP_0|_{U_0},
$$
which clearly factors further through $F_{U}^{*r}\sP|_{\hat
U}\stackrel{\mod p}{\twoheadrightarrow} F_{U_0}^{*r}\sP_0|_{U_0}$.
Thus we obtain a morphism $F_{U_0}^{*r}\sP_0|_{U_0}\to
\sP_0|_{U_0}$ which is denoted by
$[\frac{\phi_{r,F_{U}}}{p^{r-1}}]$.
\begin{lemma}\label{local relative frobenius glues modulo p}
The local morphisms $\{[\frac{\phi_{r,F_{U}}}{p^{r-1}}]\}_{U\in
\sU}$ glue into a global one $ \tilde F_{rel}: F_{M_0}^{*r}\sP_0\to
\sP_0
 $.
\end{lemma}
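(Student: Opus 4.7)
The statement is a cocycle/gluing check: on an overlap $U\cap U'$ in the cover $\sU$, I must verify that the two local maps $[\phi_{r,F_U}/p^{r-1}]$ and $[\phi_{r,F_{U'}}/p^{r-1}]$ produce the same morphism $F_{M_0}^{*r}\sP_0|_{U_0\cap U'_0}\to \sP_0|_{U_0\cap U'_0}$. Note first that this question is well-posed: since every Frobenius lifting $F_U$ reduces to the absolute Frobenius $F_{M_0}$ modulo $p$, the reductions of $F_U^{*r}\sP$ and $F_{U'}^{*r}\sP$ agree canonically with $F_{M_0}^{*r}\sP_0$, so the source of each local map becomes intrinsic after reduction mod $p$.

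The main input is the standard comparison between two Frobenius liftings on a crystal, due to Berthelot and used systematically in \cite{Fa2}, \cite{Fa3}. On $\hat{U}\cap \hat{U}'$, with $\{T_j\}$ a system of \'{e}tale local coordinates, there is a canonical Taylor isomorphism
\[
\tau_{UU'}\colon F_{U'}^{*r}\EE_0\xrightarrow{\ \simeq\ }F_U^{*r}\EE_0,\qquad 1\otimes e\longmapsto \sum_{I}\frac{(F_U^{*}T-F_{U'}^{*}T)^I}{I!}\otimes \nabla_T^{I}e,
\]
constructed from the integrable connection $\nabla_{\EE_0}$, and it satisfies the intertwining identity $\phi_{r,F_U}\circ \tau_{UU'}=\phi_{r,F_{U'}}$. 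Consequently, as an $\sO_{\hat U\cap \hat U'}$-linear map $F_{U'}^{*r}\EE_0\to\EE_0$,
\[
\phi_{r,F_{U'}}-\phi_{r,F_U}\circ(\mathrm{id})\;=\;\sum_{|I|\ge 1}\frac{(F_U^{*}T-F_{U'}^{*}T)^I}{I!}\,\phi_{r,F_U}\bigl(1\otimes \nabla_T^{I}e\bigr),
\]
after identifying the two sources via the identity on underlying modules mod $p$.

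The decisive point is a divisibility count. Because $F_U$ and $F_{U'}$ both lift the absolute Frobenius modulo $p$, the difference $F_U^{*}T_j-F_{U'}^{*}T_j$ lies in $p\hat{R}_{U\cap U'}$, and therefore $(F_U^{*}T-F_{U'}^{*}T)^I\in p^{|I|}\hat R_{U\cap U'}$; the factorial $I!$ is a unit for $|I|<p$, and higher terms are handled by $p$-adic convergence in the usual way. On the other hand, Lemma \ref{jumping is controled by one factor} provides the uniform estimate $\phi_{r,F_U}(F_U^{*r}\EE_0)\subset p^{r-1}\EE_0$ for every lifting $F_U$, and the same estimate holds on $\EE_0\otimes \Omega_{\hat U}^{\otimes |I|}$ since $\Omega_{\hat U}$ is locally free and $\phi_{r,F_U}$ acts semi-linearly. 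Combining these, the $I$-th contribution above lies in $p^{|I|+r-1}\EE_0\subseteq p^r\EE_0$ for every $|I|\ge 1$, so
\[
\phi_{r,F_{U'}}\equiv \phi_{r,F_U}\pmod{p^r\EE_0}\qquad \text{on }\hat U\cap \hat U'.
\]

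Dividing by $p^{r-1}$ turns this into the congruence $[\phi_{r,F_{U'}}/p^{r-1}]\equiv [\phi_{r,F_U}/p^{r-1}]\pmod{p\EE_0}$. Passing to the quotient by $F^1\EE_0$ on the source and reducing the target modulo $p$, the two induced maps $F_{M_0}^{*r}\sP_0\to\sP_0$ coincide on $U_0\cap U'_0$. Therefore the local morphisms $\{[\phi_{r,F_U}/p^{r-1}]\}_{U\in\sU}$ satisfy the cocycle condition and glue to the desired global morphism $\tilde F^r_{rel}\colon F_{M_0}^{*r}\sP_0\to\sP_0$. I expect the only delicate point to be the convergence/finite-order truncation of the Taylor series together with the mild care needed to make sense of $\phi_{r,F_U}$ acting on $\nabla^I$-expansions valued in $\EE_0\otimes\Omega^{\otimes|I|}$; both are routine once one invokes $p$-adic completeness of $\hat R$ and the framework of $MF_{big,r}^{\nabla}$ in \S\ref{preliminary discussion}.
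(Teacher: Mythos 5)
Your proposal is correct and follows essentially the same route as the paper's proof: compare the two local maps via the Taylor formula for a filtered $F$-crystal under change of Frobenius lifting, observe that each nonconstant term carries a factor $z^I/I!$ with $z\in p\hat R$ and hence lies in $p\hat R$ (this is where the hypothesis $p\geq 3$ is used), and then invoke the uniform bound $\phi_{r,F_U}(F_U^{*r}\EE_0)\subset p^{r-1}\EE_0$ from Lemma \ref{jumping is controled by one factor} to get the extra $p$ after dividing by $p^{r-1}$. The only difference is cosmetic: the paper renormalizes the Frobenius to $\phi_r/p^{r-1}$ first (working in the auxiliary object $\EE_0'\in MF_{big,r}^\nabla$) and then applies the Taylor expansion, whereas you keep $\phi_r$ and divide at the end; and your intermediate bound $p^{|I|+r-1}$ is a slight overstatement (it should be $p^{v_p(z^I/I!)+r-1}$, since for $|I|\geq p$ the factorial eats $p$-adic valuation), but the inequality $v_p(z^I/I!)\geq 1$ that you actually need holds for all $|I|\geq 1$, so the conclusion $\phi_{r,F_{U'}}\equiv\phi_{r,F_U}\pmod{p^r\EE_0}$ is sound. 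Also, no separate estimate on $\EE_0\otimes\Omega^{\otimes|I|}$ is needed: the iterated derivatives $\nabla_\partial^i(s)$ in the Taylor formula are already sections of $\EE_0$ after contraction with the coordinate vector fields, so $\phi_r$ applies to them directly, exactly as in the paper.
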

\begin{proof}
It is equivalent to show the following statement: for two
different Frobenius liftings $F_U$ and $F'_U$ of the absolute
Frobenius $F_{U_0}$, and for a local section of
$(F_{M_0}^{*r}\sP_0)(U_0)$ of form $F_{U_0}^{*r}s_0$ with $s_0\in
\sP_0(U_0)$, one has the equality
 $
[\frac{\phi_{r,F_{U}}}{p^{r-1}}](F_{U_0}^{*r}s_0)=[\frac{\phi_{r,F'_{U}}}{p^{r-1}}](F_{U_0}^{*r}s_0)
 $.
Let $s$ be an element of $\EE_0(\hat U)$ lifting $s_0$. It is to
show that
 $
(\frac{\phi_{r,F'_{U}}}{p^{r-1}}F'^{*r}_U-\frac{\phi_{r,F_{U}}}{p^{r-1}}F_U^{*r})(s)\in
p\EE_0(\hat U)
 $.
Note that by replacing the Frobenius $\phi_r$ of $\EE_0$ with
$\frac{\phi_r}{p^{r-1}}$ one obtains another object in
$\mathcal{MF}^{\nabla}_{big,r}(M)$, which is denoted by $\EE_0'$.
Let $x_0\in \hat U(k)$ and $\hat U_{x_0}$ be the completion of $U$
at $x_0$. Fix an isomorphism $\hat U_{x_0}\cong W(k)[[t]]$. Then
$F_U$ and $F'_U$ restrict to two Frobenius lifting on $\hat
U_{x_0}$. For any local section $s'$ of $\EE_0'(\hat U_{x_0})$, one
has the Taylor formula (see \S7 \cite{Fa3}, Theorem 2.3 \cite{Fa2},
page 16 \cite{Ki}): write $\partial=\partial_t$ and
$z=F'_U(t)-F_U(t)$. Then it holds that
$$
\frac{\phi_{r,F'_{U}}}{p^{r-1}}F'^*_U(s')=\sum_{i=0}^{\infty}\frac{\phi_{r,F_{U}}}{p^{r-1}}F^*_U(\nabla_{\partial}^i(s'))\otimes
\frac{z^i}{i!}.
$$
Note that as $z$ is divisible by $p$, $\frac{z^i}{i!}$ is
divisible by $p$ for all $i\geq 1$. So the difference
$\frac{\phi_{r,F'_{U}}}{p^{r-1}}F'^*_U(s')-\frac{\phi_{r,F_{U}}}{p^{r-1}}F^*_U(s')$
belongs to $p\EE_0'(\hat U_{x_0})$. The lemma follows.
\end{proof}

Let $\sS\subset M_0(\bar k)$ be the supersingular locus of
$f_0:X_0\to M_0$.
\begin{proposition}\label{supersingular locus coincides with frobenius degeneracy
locus} The morphism $\tilde F_{rel}$ is nonzero and takes zero at
$x_0\in M_0(\bar k)$ iff $x_0\in \sS$.
\end{proposition}
\begin{proof}
The morphism $\tilde F_{rel}$ is nonzero because of Lemma
\ref{jumping is controled by one factor}. And when and only when it
takes zero at $x_0$, the Newton slopes of the factors $M_i$ in the
proof of Lemma \ref{jumping is controled by one factor} take value
in $\{2\times 1\}$, which by the proof of Theorem
\ref{classification and existence of newton polygon} implies that
$x_0\in \sS$.
\end{proof}

\subsection{A mass formula}
In this paragraph we deduce a mass formula for the supersingular
locus $\sS$ from the pair $(\sP_0,\tilde F_{rel})$. It is clear that
we shall determine the multiplicity of the Frobenius degeneracy at a
supersingular point. To that we have the following result:
\begin{proposition}\label{mutiplicity two property}
The vanishing order of $\tilde F_{rel}$ at each supersingular point
is two.
\end{proposition}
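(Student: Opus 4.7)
The plan is to work formally around a supersingular point $x_0 \in \sS$ and reduce the multiplicity computation to an explicit display calculation for the versal deformation of a Drinfel'd $\sO_{\mathfrak p}$-divisible module. By Lemma \ref{application of global to local}, the restriction $\EE_0|_{\hat M_{x_0}}$ is identified in $MF^{\nabla}_{big,r}(\hat M_{x_0})$, up to the constant unit crystal $(M_{A_2},\phi^r_{A_2},d)$, with $\Sym^2 \sN_0 \otimes \bigotimes_{i=1}^{r-1} \det \sN_i$. In both Newton cases of Proposition \ref{possible newton slopes}, each $\det \sN_i$ for $i \geq 1$ is a rank one $\phi^r$-module of Newton slope one, so its Frobenius equals $p$ times a unit; similarly the unit crystal factor contributes only units. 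After dividing by $p^{r-1}$ and projecting modulo $p$ to $\sP_0|_{\hat M_{x_0}}$, it follows that $\tilde F^r_{rel}|_{\hat M_{x_0}}$ agrees, up to a unit, with the reduction of $\phi^r_{\Sym^2 \sN_0}$ to the quotient
\[
\mathrm{HW}_{\Sym^2}\colon F_{M_0}^{*r}\bigl(\Sym^2 \sN_0/F^1(\Sym^2 \sN_0)\bigr) \longrightarrow \Sym^2 \sN_0/F^1(\Sym^2 \sN_0) \bmod p.
\]

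Next I would identify $\mathrm{HW}_{\Sym^2}$ with a tensor square. Since $F^1 \sN_0 \subset \sN_0$ is a line subbundle, one has $F^1(\Sym^2 \sN_0) = F^1 \sN_0 \cdot \sN_0$ and a canonical isomorphism $\Sym^2 \sN_0/F^1(\Sym^2 \sN_0) \cong (\sN_0/F^1 \sN_0)^{\otimes 2}$. The divisibility condition $\phi^r(F^1 \sN_0) \subset p \sN_0$ shows that $\phi^r_{\sN_0}$ reduces modulo $p$ to a well-defined Hasse-Witt-type map
\[
\mathrm{HW}\colon F_{M_0}^{*r}(\sN_0/F^1 \sN_0) \longrightarrow \sN_0/F^1 \sN_0,
\]
and a direct check on symmetric powers yields $\mathrm{HW}_{\Sym^2} = \mathrm{HW}^{\otimes 2}$. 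Consequently the vanishing order of $\tilde F^r_{rel}$ at $x_0$ equals twice the vanishing order of $\mathrm{HW}$ at the closed point of the one-dimensional versal deformation $\hat U_{\sG_B} \cong \Spf W(k)[[t]]$ of the Drinfel'd $\sO_{\mathfrak p}$-divisible module $B \otimes k$.

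It remains to show that $\mathrm{HW}$ has a simple zero at the closed point of $\hat U_{\sG_B}$. For this I would apply the theory of displays (\cite{No}, \cite{NO}, \cite{Zi1}, \cite{Zi2}) to the supersingular Drinfel'd $\sO_{\mathfrak p}$-module $B \otimes k$: writing the universal display in normal form and iterating the Frobenius $r$ times, one computes the induced map on the quotient line bundle $\sN_0/F^1 \sN_0$ modulo $p$ and finds that it equals $t$ times a unit. Since by Remark \ref{remark on the existence of NP} the generic point is ordinary, where $\mathrm{HW}$ is invertible, this gives vanishing of order exactly one at the closed point. Squaring then yields the desired vanishing of order exactly two for $\tilde F^r_{rel}$.

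The main obstacle is this last step, the explicit display computation. One must verify that it is precisely the $r$-th iterate $\phi^r$ that vanishes linearly in the deformation parameter $t$, and that no higher power of $t$ already divides its reduction modulo $p$. This reduces to a careful Newton-polygon calculation in the normal form of a one-dimensional display of $\sO_{\mathfrak p}$-height two, where the residue degree $r$ of $\sO_{\mathfrak p}$ and the single jump of the Newton polygon from ordinary to supersingular conspire to produce exactly one linear factor of $t$; it is precisely this simplicity that pins down the multiplicity to two rather than some larger even integer.
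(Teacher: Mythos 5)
Your proposal follows essentially the same route as the paper: localize at a supersingular point via Lemma \ref{application of global to local}, identify $\EE_0|_{\hat M_{x_0}}$ with $\Sym^2\sN_0\otimes\bigotimes_{i\geq 1}\det\sN_i$ (up to a constant unit crystal), and reduce to a display computation for the versal deformation of the Drinfel'd $\sO_{\mathfrak p}$-module. The factorization $\mathrm{HW}_{\Sym^2}=\mathrm{HW}^{\otimes 2}$ is a clean way of packaging why the multiplicity is twice the vanishing order of the rank-one Hasse--Witt map $\Phi_{11}$ on $\sN_0/F^1\sN_0$; this is exactly the structure of the paper's Proposition \ref{vanishing order one property}, which carries out the matrix computation explicitly (for $r=2$, noting the general case is analogous) and finds the coefficient $-u_1^2\det(\cdot)\det(\cdot)^\sigma\,t^2 + v_2 t^p+v_3 t^{p^2}$, whence order exactly two since $p>2$.

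One step you pass over too quickly: you assert that each $\det\sN_i$ for $i\geq 1$ has ``Frobenius equal to $p$ times a unit,'' appealing to the Newton slope at the closed point. That fixes the $p$-valuation only at $t=0$; over the deformation ring $W(\bar k)[[t]]$ one must also know that the deformation variable does not appear in the determinant of the $\sN_i$-block of $\phi^r$ in a way that spoils divisibility of the global expression by $p^{r-1}$. This does hold, but it requires an argument: either observe that the tautological unipotent $u\in U_{\sG_B}(R_{\sG_B})$ has determinant one on every eigenfactor, so the variable part drops out of $\det\phi^r|_{\sN_i}$, or compute directly as the paper does (in the displayed $r=2$ case the $\sN_1$-block determinant is $-p\det\binom{a_1\ b_1}{a_2\ b_2}\det\binom{c_1\ d_1}{c_2\ d_2}^{\sigma}$, manifestly $t$-independent). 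You also correctly flag that the simple-zero claim for $\mathrm{HW}$ is the real content; that is precisely what the paper's explicit display calculation supplies, and your plan would need to reproduce it.
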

This is a local statement. Take an $x_0\in\sS\cap M_0(k)$. By
discussions in \S\ref{Drinfel'd module and versal deformation
subsection}, there is a Drinfel'd $\sO_{\mathfrak{p}}$-divisible
module $B'$ such that Corollary \ref{claim 3} holds. It is also
clear that $B'$ is supersingular. In this case, it is a formal
$p$-divisible group. By Proposition \ref{application of global to
local}, the above statement can be deduced from the corresponding
result for the universal filtered Dieudonn\'{e} module associated to
a versal deformation of a Drinfel'd $\sO_{\mathfrak p}$-divisible
module. To this end we shall apply the theory of display for a local
expression of the Frobenius. Note that $\Sym^2\sN_0\otimes
\bigotimes_{i=1}^{r-1}\det(\sN_i)$ is contained as a direct factor
in $\bigwedge^2(\otimes_{i=0}^{r}\sN_i)$ (resp.
$\Sym^2(\otimes_{i=0}^{r}\sN_i)$) for $r$ even (resp. odd). The
induced Frobenius on the factor $\Sym^2\sN_0\otimes
\bigotimes_{i=1}^{r-1}\det(\sN_i)$ from the second wedge/symmetric
power of $\phi^r_{ten}$ on $\otimes_{i=0}^{r}\sN_i$ is denoted by
${\phi^r_{ten}}^{\otimes 2}$. We have then the following
\begin{proposition}\label{vanishing order one property}
The vanishing order of $\phi_{\sN_0} \mod p$ on
$\frac{\sN_0}{Fil^1\sN_0}$ along the equal characteristic
deformation at the point $[B']$ is one, and that of
$\frac{{\phi^r_{ten}}^{\otimes 2}}{p^{r-1}} \mod p$ on
$\frac{\Sym^2\sN_0}{Fil^1\Sym^2\sN_0}\otimes
\bigotimes_{i=1}^{r-1}\det \sN_i$ is two.
\end{proposition}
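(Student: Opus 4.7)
The plan is to combine the explicit Faltings-type description of the universal filtered Dieudonn\'e module over $\hat U_{\sG_B}$ developed in \S\ref{Drinfel'd module and versal deformation subsection} with a direct $2\times 2$ matrix calculation, and to read off the vanishing orders at the supersingular point $t=0$ after reduction modulo $p$.

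First I would fix a $W(k)$-basis $(e_1,e_2)$ of $M_0$ so that $e_1$ generates $Fil^1 M_0$; by Lemma \ref{the action of group on eigenfactors} together with the Hodge slopes in Proposition \ref{possible newton slopes}, $Fil^1 M_i=0$ for $i\geq 1$, so the whole Hodge line of $M_B$ sits inside the $M_0$-factor. Since $\mu_{B'}$ is nontrivial only in the first $\GL_2$-factor of $\sG_B\cong\prod_{i=0}^{r-1}\GL_2$, the tautological element of $U_{\sG_B}$ acts on $\sN_0$ as the lower unipotent $u=\begin{pmatrix}1&0\\t&1\end{pmatrix}$ with $R_{\sG_B}=W(k)[[t]]$, and as the identity on the remaining $\sN_i$'s. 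Thus $\phi_{\sN_B}=u\circ(\phi_{M_B'}\otimes\phi_{\hat U_{\sG_B}})$ coincides with $\phi_{M_B'}\otimes\phi_R$ on every step $\sN_i\to\sN_{i+1}$ with $i\ne r-1$, and equals $u\circ(\phi_{M_B'}\otimes\phi_R)$ on the last step $\sN_{r-1}\to\sN_0$. Writing $\Phi^r:=(\phi_{M_B'})^r|_{M_0}$ in the basis $(e_1,e_2)$, the divisibility $\phi(Fil^1)\subset pM$ forces its first column to be divisible by $p$, i.e.
\[
\Phi^r|_{M_0}=\begin{pmatrix}p\alpha & \gamma\\ p\beta & \delta\end{pmatrix},\qquad \alpha,\beta,\gamma,\delta\in W(k).
\]
Proposition \ref{possible newton slopes} gives $v_p(\det\Phi^r|_{M_0})=1$, so $\alpha\delta-\gamma\beta\in W(k)^{\times}$, while the supersingularity of $B'\otimes k$ forces $\Phi^r|_{M_0}\bmod p$ to be nilpotent, i.e.\ $\bar\delta=0$, and hence $\bar\gamma,\bar\beta\in k^{\times}$.

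The first assertion then follows by multiplying out $\phi_{\sN_0}=u\cdot\Phi^r|_{\sN_0}$:
\[
\phi_{\sN_0}=\begin{pmatrix} p\alpha & \gamma\\ p(t\alpha+\beta) & t\gamma+\delta\end{pmatrix}.
\]
Modulo $p$ the first column drops out, so $\phi_{\sN_0}\bmod p$ factors through the rank-one quotient $\sN_0/Fil^1\sN_0$, and the induced endomorphism of $(\sN_0/Fil^1\sN_0)\otimes(R_{\sG_B}/p)\cong k[[t]]\cdot\bar e_2$ is multiplication by $t\bar\gamma+\bar\delta=t\bar\gamma$ (up to the appropriate $\sigma^r$-twist on scalars), which is $t$ times a unit, of vanishing order exactly one.

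For the second assertion, although $u$ enters the cycle defining $\phi_{\sN_i}=\phi_{\sN_B}^r|_{\sN_i}$ for every $i$, one has $\det u=1$, and the analogous semilinear composition shows that $\det\phi_{\sN_i}$ equals $p$ times a unit in $R_{\sG_B}$, \emph{independent of $t$}, for each $i\geq 1$. Therefore $\bigotimes_{i=1}^{r-1}\det\phi_{\sN_i}=p^{r-1}\cdot(\text{unit of }R_{\sG_B})$, which exactly cancels the denominator $p^{r-1}$ in $\frac{{\phi_{ten}^r}^{\otimes 2}}{p^{r-1}}$, and the problem reduces to computing $\Sym^2\phi_{\sN_0}$ on the rank-one quotient $\Sym^2\sN_0/Fil^1\Sym^2\sN_0$ spanned by the image of $e_2^{2}$. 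Applying $\Sym^2$ to the displayed matrix yields $\Sym^2\phi_{\sN_0}(e_2^{\,2})\equiv(t\bar\gamma+\bar\delta)^2\,\bar e_2^{\,2}=t^2\bar\gamma^{\,2}\,\bar e_2^{\,2}\pmod p$, of vanishing order exactly two in $t$. The main subtlety will be keeping track of the Frobenius twists in the semilinear composition so that the identity $\det u=1$ actually delivers the clean factorization $p^{r-1}\cdot(\text{unit})$, and verifying---via the Drinfel'd $\sO_{\mathfrak p}$-structure, which pins the Hodge line inside $M_0$---that supersingularity of $B'\otimes k$ translates on the nose into the vanishing of the $(2,2)$-entry $\bar\delta$ of $\Phi^r|_{M_0}$ modulo $p$.
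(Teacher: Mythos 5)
Your argument is correct, but it takes a genuinely different route from the paper's. The paper proves the proposition by the theory of displays (Norman, Norman--Oort, Zink): it writes an explicit $4\times 4$ display for the covariant Dieudonn\'e module of the Cartier dual of $B'$ over the full three-parameter equal-characteristic deformation, cuts out the one-dimensional Drinfel'd sublocus by imposing commutation with the $\sO_{\mathfrak p}$-endomorphism matrix, and then computes the relevant entries of $\Phi=M_1M_1^{\sigma}$ (worked out for $r=2$, with the general case declared analogous); the vanishing of the constant term and the nonvanishing of the linear coefficient are deduced from membership in, and finiteness of, the supersingular locus. You instead work directly from Faltings's explicit description $\phi_{\sN_B}=u\circ(\phi_{M_B'}\otimes\phi_{\hat U_{\sG_B}})$ on the versal $\sG_B$-deformation, which already incorporates the Drinfel'd structure, so there is no auxiliary computation to locate the Drinfel'd sublocus. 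You then constrain the $2\times 2$ matrix $\Phi^r|_{M_0}$ purely structurally: $p$-divisibility of the first column from $\phi(Fil^1)\subset pM$, $v_p(\det)=1$ from Lemma \ref{phi-module and direct sum} together with the Hodge slopes, and $\bar\delta=0$ from nilpotency of the reduction at a supersingular point; the unit $\bar\gamma$ then falls out of the determinant, which is a cleaner and more local derivation of the key nonvanishing than the paper's appeal to finiteness of the supersingular set. For the second assertion, your observation that $\det u=1$ makes each $\det\phi_{\sN_i}$ (for $i\geq 1$) equal to $p$ times a $t$-independent unit, so that $\bigotimes_{i=1}^{r-1}\det\phi_{\sN_i}$ exactly cancels $p^{r-1}$ and reduces the problem to $\Sym^2\phi_{\sN_0}$ on the rank-one quotient, is correct and transparently handles all $r$. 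What the paper's approach buys is a completely explicit, display-level computation independent of the Faltings deformation formalism of \S4; what yours buys is uniformity in $r$ and a structural, coordinate-light argument that isolates exactly which inputs ($\det u=1$, divisibility, the valuation of the determinant, nilpotency) drive the multiplicities.
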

\begin{proof}
Note that it suffices to write down the display over the
equal-characteristic deformation (see \cite{No}, \cite{NO}, \S2
\cite{GO}). For simplicity we shall take $r=2$ in the following
argument. The proof for a general $r$ is completely the same. Let
$(N,F,V)$ be the covariant Dieudonn\'{e} module of the Cartier dual
of the Drinfel'd $\sO_{\mathfrak p}$-divisible module $B'$ over
$\bar k$. So we have the eigen decomposition $N=N_0\oplus N_1$ with
respect to the endomorphism $\sO_{\mathfrak p}\cong\Z_{p^2}$. Choose
a basis $\{X_i,Y_i\}$ for $N_i, i=0,1$. To write down the display,
we need to arrange the order of the basis elements into
$\{Y_0,X_1,Y_1,X_0\}$ with the understanding that $X_0$ modulo $p$
is the basis element of $\frac{VN}{pN}$ which is one dimensional
$\bar k$-vector space. Then the display under the chosen basis is
given by the matrix:
$$
\left(
  \begin{array}{cc}
    A_{3\times 3} & B_{3\times 1} \\
    C_{1\times 3} & D_{1\times 1} \\
  \end{array}
\right)= \left(
  \begin{array}{cccc}
    0 & c_1 & d_1 & 0 \\
    b_1 & 0 & 0 & a_1 \\
    b_2 & 0 & 0 & a_2 \\
    0 & c_2 & d_2 & 0 \\
  \end{array}
\right)
$$
This is an invertible matrix, i.e.  $\det\left(
                                             \begin{array}{cc}
                                               a_1 & b_1 \\
                                               a_2 & b_2 \\
                                             \end{array}
                                           \right)\cdot \det\left(
                                             \begin{array}{cc}
                                               c_1 & d_1 \\
                                               c_2 & d_2 \\
                                             \end{array}
                                           \right)
 $ is a unit. Since both determinants are elements in $W(\bar k)$, it implies that each determinant is a unit in
$W(\bar k)$. The universal equal-characteristic deformation ring of
$B'$ as $p$-divisible group is $\bar k[[t_0,t_1,t_2]]$. Let $T_i\in
W(\bar k[[t_0,t_1,t_2]])$ be the Teichm\"{u}ller lifting of $t_i$
for $0\leq i\leq 2$. Then by Norman and Norman-Oort loc. cit., the
display over the universal equal-characteristic deformation is given
by $ \left(
  \begin{array}{cc}
    A+TC & B+TD \\
    C & D \\
  \end{array}
\right)
 $, where $T= \left(%
\begin{array}{ccc}
  T_0 & T_1 & T_2 \\
\end{array}%
\right)^{t}$. And the Frobenius on the universal display is given
by
$$
M_1:=\left(
  \begin{array}{cc}
    A+TC & p(B+TD) \\
    C & pD \\
  \end{array}
\right).
$$
We need to determine the one dimensional sublocus of $\Spf(\bar
k[[t_0,t_1,t_2]])$ where $B'$ deforms as a Drinfel'd module. Take
$s\in \Z_{p^2}$ to be a primitive element. Then the endomorphism
of $N$ given by $s$ has the matrix form (using the same basis):
$$
M_2:=\left(
  \begin{array}{cccc}
    \xi & 0&0&0 \\
    0 & \xi^{\sigma}&0&0 \\
 0 & 0&\xi^{\sigma}&0 \\
  0 &0&0& \xi \\
  \end{array}
\right).
$$
The universal display of the Drinfel'd module has the property
that the endomorphism matrix commutes with the Frobenius. That is
one has
 $
M_1M_2^{\sigma}=M_2M_1
 $.
Now by an easy computation one finds that the one dimensional
deformation as the Drinfel'd module is given by $t_1=t_2=0$. Write
$t=t_0$. Thus the two-iterated Frobenius $\phi_{\sN_{B'}}^2$ on
$\sN_{B'}$ along the equal-characteristic deformation is displayed
by
$$
\phi_{\sN_{B'}}^2\{Y_0,X_1,Y_1,X_0\}=\{Y_0,X_1,Y_1,X_0\}\Phi,
$$
where $\Phi=M_1M_1^{\sigma}$ is equal to
$$
\left(
  \begin{array}{cccc}
  \Phi_{11}  & 0 & 0 &  \Phi_{14} \\
    0 &   \Phi_{22}  &  \Phi_{23}  & 0 \\
    0 &   \Phi_{32} &  \Phi_{33} & 0 \\
      \Phi_{41} & 0 & 0 &   \Phi_{44} \\
  \end{array}
\right).
$$
The nontrivial entries are given by
\begin{eqnarray*}
  \Phi_{11}&=& (b_1^{\sigma}c_1+b_2^{\sigma}d_1)+(b_1^{\sigma}c_2+b_2^{\sigma}d_2)t,  \Phi_{14}=(pa_1^{\sigma}c_1+pa_2^{\sigma}d_1)+(pa_1^{\sigma}c_2+pa_2^{\sigma}d_2)t,\\
  \Phi_{22} &=& (b_1c_1^{\sigma}+pa_1c_2^{\sigma})+b_1c_2^{\sigma}t^{\sigma},\quad \quad\quad\Phi_{23} =(b_1d_1^{\sigma}+pa_1d_2^{\sigma})+b_1d_2^{\sigma}t^{\sigma}, \\
 \Phi_{32} &=& (b_2c_1^{\sigma}+pa_2c_2^{\sigma})+b_2c_2^{\sigma}t^{\sigma},\quad \quad\quad   \Phi_{33}=(b_2d_1^{\sigma}+pa_2d_2^{\sigma})+b_2d_2^{\sigma}t^{\sigma},\\
\Phi_{41}&=&b_1^{\sigma}c_2+b_2^{\sigma}d_2,\quad\quad
\quad\quad\quad\quad \quad\quad \ \Phi_{44}
=pa_1^{\sigma}c_2+pa_2^{\sigma}d_2.
\end{eqnarray*}
Consider first the element $\Phi_{11}$: its modulo $p$ reduction is
equal to the iterated Hasse-Witt map on $\frac{\sN_0}{Fil^1\sN_0}$.
As we require that $B'$ lies in the supersingular locus which is a
finite set, it follows that
$$
b_1^{\sigma}c_1+b_2^{\sigma}d_1= 0 \mod p,
b_1^{\sigma}c_2+b_2^{\sigma}d_2 \neq 0 \mod p.
$$
This shows the first assertion in the statement. So we can write
that $b_1^{\sigma}c_1+b_2^{\sigma}d_1=pv_1,
b_1^{\sigma}c_2+b_2^{\sigma}d_2=u_1$, where $u_1$ is a unit.
Consider the induced Frobenius on $\Sym^2\sN_0\otimes
\bigwedge^2\sN_1$. We shall compute the coefficient before the
element $Y_0^2\otimes X_1\wedge Y_1$, which is the basis element
of $\frac{\Sym^2\sN_0}{Fil^1\Sym^2\sN_0}\otimes \det \sN_1$, under
the map $\frac{{\phi_{ten}^2}^{\otimes 2}}{p}\mod p$. Using the
above matrix expression of $\phi_{\sN_{B'}}^2$ one computes that
the local expression is given by
$$
[-u_1^2\det\left(
             \begin{array}{cc}
               a_1 & b_1 \\
               a_2 & b_2 \\
             \end{array}
           \right)\det\left(
             \begin{array}{cc}
               c_1 & d_1 \\
               c_2 & d_2 \\
             \end{array}
           \right)^{\sigma}]
t^2+v_2t^p+v_3t^{p^2}.
$$
By the previous discussion we know that the coefficient before
$t^2$ is a unit. As $p$ is assumed to be odd, it follows that the
multiplicity is equal to two.
\end{proof}
Now the proof of Proposition \ref{mutiplicity two property} is
clear:
\begin{proof}
By the construction of $\tilde F_{rel}$, its vanishing order at
$x_0$ is equal to that of $\frac{\phi_r}{p^{r-1}} \mod p$ on
$\frac{{\EE_0}}{F^1{\EE_0}}$ along $\hat M_{0,x_0}$. Note that the
closed formal subscheme $\hat M_{0,x_0}\subset \hat M_{x_0}$
represents the equal-characteristic deformation direction. By
Proposition \ref{application of global to local}, the restriction of
${\EE_0}$ to $\hat M_{x_0}$ is naturally isomorphic to $\hat
\xi_{crys}[\Sym^2\sN_0\otimes \bigotimes_{i=1}^{r-1}\det(\sN_i)]$.
Thus the result follows from Proposition \ref{vanishing order one
property}.
\end{proof}

\begin{corollary}\label{Newton jumping locus}
Let $\sS$ be the supersingular locus of $f_0: X_0\to M_0$. Then in
the Chow ring of $\bar M_0$ one has the cycle formula
$$
2\sS=(1-p^r)c_1(M_0),
$$
where $r=[F_{\mathfrak p}:\Q_p]$. Consequently one has the
following mass formula
$$
|\sS|=(p^r-1)(g-1),
$$
where $g$ is the genus of $M_0$.
\end{corollary}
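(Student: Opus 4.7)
The plan is to interpret the weak Hasse--Witt map $\tilde F_{rel}^r: F_{M_0}^{*r}\sP_0\to \sP_0$ as a global section of the line bundle $\sL_{\rm HW}:=\sP_0\otimes F_{M_0}^{*r}\sP_0^{-1}$ on $M_0\otimes \bar k$, and then to compute the zero divisor of this section in two different ways: geometrically (counting degeneracies on $\sS$) and cohomologically (first Chern class). First I would verify that $\tilde F_{rel}^r$ is not identically zero (this is Lemma \ref{jumping is controled by one factor} combined with the gluing statement Lemma \ref{local relative frobenius glues modulo p}), so that the induced section $s\in H^0(M_0\otimes \bar k, \sL_{\rm HW})$ is nonzero and its divisor $\mathrm{div}(s)$ is well-defined as an effective Cartier divisor.

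Next I would identify $\mathrm{div}(s)$ as a cycle. Proposition \ref{supersingular locus coincides with frobenius degeneracy locus} tells us that the support of $\mathrm{div}(s)$ is exactly the finite set $\sS$. Proposition \ref{mutiplicity two property} refines this by asserting that the multiplicity of $s$ at each $x_0\in \sS$ is exactly two. Putting these together yields the equality of cycles
$$
\mathrm{div}(s)=2\sS.
$$
On the other hand, since $F_{M_0}$ is the absolute Frobenius on a curve in characteristic $p$, one has $c_1(F_{M_0}^{*r}\sP_0)=p^r\,c_1(\sP_0)$, whence
$$
[\mathrm{div}(s)]=c_1(\sL_{\rm HW})=(1-p^r)\,c_1(\sP_0)
$$
in the Chow ring (equivalently, in $\mathrm{Pic}(\bar M_0)$). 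By Proposition \ref{grading}, $\sP_0\cong \Omega_{M_0}^{-1}$ over $\bar M_0$, so $c_1(\sP_0)=c_1(T_{M_0})=c_1(M_0)$. Combining, one obtains the claimed cycle formula $2\sS=(1-p^r)\,c_1(M_0)$ in $\mathrm{CH}^1(\bar M_0)$.

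Finally, taking degrees on both sides of this cycle identity converts the formula into a numerical one: the left-hand side has degree $2|\sS|$, while $\deg c_1(M_0)=\deg T_{M_0}=2-2g(M_0)$ by definition of the genus (using the footnote's convention summed over components). Thus
$$
2|\sS|=(1-p^r)(2-2g(M_0))=2(p^r-1)(g(M_0)-1),
$$
from which $|\sS|=(p^r-1)(g(M_0)-1)$ follows immediately. Since all of the nontrivial input---namely the set-theoretic identification of the zero locus with $\sS$ and, crucially, the multiplicity-two statement established via the display computation in Proposition \ref{vanishing order one property}---has already been carried out, the only remaining task is this formal book-keeping; there is no significant obstacle at this stage. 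The one mild subtlety worth flagging is that the cycle equality only determines $\sS$ up to $2$-torsion in $\mathrm{Pic}(\bar M_0)$ after dividing by $2$, which is the reason the authors record only $2\sS=(1-p^r)c_1(M_0)$ as the sharp Chow-theoretic statement and obtain the mass formula by passing to degrees, where the division by $2$ is harmless.
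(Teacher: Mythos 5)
Your proof is correct and follows exactly the paper's own route: read $\tilde F_{rel}^r$ as a nonzero section of $\sP_0\otimes F_{M_0}^{*r}\sP_0^{-1}$, identify its divisor geometrically as $2\sS$ via Propositions \ref{supersingular locus coincides with frobenius degeneracy locus} and \ref{mutiplicity two property} and cohomologically as $(1-p^r)c_1(\sP_0)$, then use Proposition \ref{grading} to rewrite $c_1(\sP_0)$ in terms of $c_1(M_0)$ and take degrees. One small remark: your intermediate bookkeeping $c_1(\sP_0)=c_1(M_0)$ together with $2\sS=(1-p^r)c_1(\sP_0)$ is the internally consistent version, whereas the printed proof records $2\sS=(p^r-1)c_1(\sP_0)$ and $c_1(\sP_0)=-c_1(M_0)$, two sign slips that cancel in the stated cycle formula.
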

\begin{proof}
By Propositions \ref{supersingular locus coincides with frobenius
degeneracy locus} and \ref{mutiplicity two property}, it follows
that
$$
2\sS=(p^r-1)c_1(\sP_0).
$$
By Proposition \ref{grading}, one has further
$$
c_1(\sP_0)=-2c_1(\sL_0)=-c_1(M_0).
$$
By taking the degree of the cycle formula, one obtains the mass
formula as claimed.
\end{proof}

\end{document}